\def\scr#1{{\scriptstyle{#1}}}
\def\dd{\mathinner{\mkern1mu\raise8pt\hbox{.}\mkern2mu
\raise4pt\hbox{.}\mkern2mu\hbox{.}\mkern1mu}}
\def\rdd{\mathinner{\mkern1mu\hbox{.}\mkern2mu
\raise4pt\hbox{.}\mkern2mu\raise8pt\hbox{.}\mkern1mu}}
\newcommand{\mo}{\mathrm{mod}\,}
\newcommand{\ind}{\mathrm{ind}\,}
\newcommand{\Hom}{\mathrm{Hom}}
\newcommand{\Tr}{\mathrm{Tr}}
\newcommand{\coh}{\mathrm{coh}}
\newcommand{\ac}{\mathrm{ac}}
\newcommand{\End}{\mathrm{End}}
\newcommand{\Ext}{\mathrm{Ext}}
\newcommand{\rad}{\mathrm{rad}}
\newcommand{\add}{\mathrm{add}}
\newcommand{\Tor}{\mathrm{Tor}}
\newcommand{\ann}{\mathrm{ann}}
\newcommand{\tom}{\mathrm{top}}
\newcommand{\soc}{\mathrm{soc}}
\newcommand{\ql}{\mathrm{ql}}
\newcommand{\pd}{\mathrm{pd}}
\newcommand{\id}{\mathrm{id}}
\newcommand{\gldim}{\mathrm{gl.dim\,}}
\newtheorem{thm}{Theorem}[section]
\newtheorem{prop}[thm]{Proposition}
\newtheorem{cor}[thm]{Corollary}
\theoremstyle{definition}
\newtheorem{ex}[thm]{Example}
\newcommand{\sC}{\mathscr{C}}
\newcommand{\sO}{\mathscr{O}}
\newcommand{\sD}{\mathscr{D}}
\newcommand{\sT}{\mathscr{T}}
\newcommand{\sE}{\mathscr{E}}
\newcommand{\sX}{\mathscr{X}}
\newcommand{\sY}{\mathscr{Y}}
\newcommand{\sH}{\mathscr{H}}
\newcommand{\cP}{\mathcal{P}}
\newcommand{\cQ}{\mathcal{Q}}
\newcommand{\cT}{\mathcal{T}}
\begin{document}

\title[The structure of generalized standard Auslander-Reiten components]{The structure and homological properties of generalized standard Auslander-Reiten components}
%
\author[Malicki]{Piotr Malicki}
\address{Faculty of Mathematics and Computer Science, Nicolaus Copernicus University, Chopina 12/18, 87-100 Toru\'n, Poland}
\email{pmalicki@mat.umk.pl}

\thanks{This work was completed with the support of the Research Grant DEC-2011/02/A/ST1/00216 of~the National Science Center Poland.}
%
\author[Skowro\'nski]{Andrzej Skowro\'nski}
\address{Faculty of Mathematics and Computer Science, Nicolaus Copernicus University, Chopina 12/18, 87-100 Toru\'n, Poland}
\email{skowron@mat.umk.pl}

\subjclass{16E10, 16E30, 16G10, 16G60, 16G70}

\dedicatory{Dedicated to Jos\'e Antonio de la Pe\~na on the occasion of his sixtieth birthday}

\keywords{Artin algebra, Auslander-Reiten quiver, generalized standard component, tilted algebra, quasitilted algebra, Euler form, 
generically tame algebra}

\baselineskip 15pt

\begin{abstract}
We describe the structure and homological properties of arbitrary generalized standard Auslander-Reiten components of artin algebras. In particular, we prove
that for all but finitely many indecomposable modules in such components the Euler characteristic is defined and nonnegative. Further, we provide a~handy criterion
for an infinite Auslander-Reiten component of an artin algebra to be generalized standard.
We solve also the long standing open problem concerning the structure of artin algebras admitting a~separating family of Auslander-Reiten components.
\end{abstract}
\maketitle


\section{Introduction and the main results} \label{intro}

Throughout the paper, by an algebra we mean a~basic indecomposable artin algebra over a~commutative artin ring $K$.
For an algebra $A$, we denote by $\mo A$ the category of finitely generated right $A$-modules, by $\ind A$ the full subcategory of $\mo A$ formed
by the indecomposable modules, and by $D$ the standard duality on $\mo A$.
The radical $\rad_A$ of $\mo A$ is the ideal generated by all nonisomorphisms between modules in $\ind A$. Then the infinite radical $\rad^{\infty}_A$ of $\mo A$ is the intersection
of all powers $\rad^i_A$, $i\geq 1$, of $\rad_A$.
By a~result of Auslander \cite{Au}, $\rad_A^{\infty}=0$ if and only if $A$ is of finite representation type, that is, there are in $\mo A$ only finitely many indecomposable modules
up to isomorphism. On the other hand, if $A$ is of infinite representation type then $(\rad_A^{\infty})^2\neq 0$, by a~result proved in \cite{CMMS}.

An important combinatorial and homological invariant of the module category $\mo A$ of an algebra $A$ is its Auslander-Reiten quiver $\Gamma_A$.
Recall that $\Gamma_A$ is a~valued translation quiver whose vertices are the isomorphism classes $\{X\}$ of modules $X$ in $\ind A$, the arrows correspond
to irreducible homomorphisms between modules in $\ind A$, and the translation is the Auslander-Reiten translation $\tau_A=D\Tr$.
We shall not distinguish between a~module $X$ in $\ind A$ and the vertex $\{X\}$ of $\Gamma_A$.
Moreover, by a~component of $\Gamma_A$ we mean a~connected component of the quiver $\Gamma_A$. Frequently, algebras can be recovered from the shapes of the components
of their Auslander-Reiten quiver. Further, very often the behavior of components of the Auslander-Reiten quiver $\Gamma_A$ of an algebra $A$ in the module category $\mo A$
leads to essential information on $A$, allowing to determine $A$ and $\mo A$ completely.
For a~component $\sC$ of $\Gamma_A$, we denote by $\ann_A(\sC)$ the annihilator of $\sC$ in $A$, that is, the intersection of the annihilators
$\{a\in A\mid Ma=0\}$ of all modules $M$ in $\sC$, and by $B(\sC)$ the quotient algebra $A/\ann_A(\sC)$, called the faithful algebra of $\sC$.
We note that $\sC$ is a~faithful component of $\Gamma_{B(\sC)}$.

In the representation theory of finite-dimensional algebras over an algebraically closed field $k$ a~prominent role is played by the standard
Auslander-Reiten components (see \cite{BGRS, BoG, Ke, Ri1, Ri2, S3, S4} for some results characterizing tame algebras via standardness of their Auslander-Reiten components).
Recall that, following \cite{BoG, Ri1}, a~component $\sC$ in the Auslander-Reiten quiver $\Gamma_{\Lambda}$ of a~finite-dimensional algebra $\Lambda$
over $k$ is called standard if the full subcategory of $\mo\Lambda$ formed by all modules from $\sC$ is equivalent to the mesh-category $k(\sC)$ of $\sC$.
In particular, one knows \cite{BoG, BrG} that, for $\Lambda$ of finite representation type,
$\Gamma_{\Lambda}$ is standard if and only if $\Lambda$ admits a~simply connected Galois covering.
Moreover, long time ago Ringel asked \cite[Problem 3]{Ri-86} if any standard regular component of the Auslander-Reiten quiver of an algebra over
an algebraically closed field is either a~stable tube or of the form ${\Bbb Z}\Delta$ for a~finite acyclic quiver $\Delta$.
This was shown to be true, also in the wider context of artin algebras \cite[Corollaries 2.4 and 2.5]{S2}.

The second named author proposed in \cite{S2} a~natural generalization of the concept of standard component, called generalized standard component, which is simpler and makes sense for
any artin algebra. Namely, a~component $\sC$ of the Auslander-Reiten quiver $\Gamma_A$ of an algebra $A$ is called generalized standard if $\rad_A^{\infty}(X,Y)=0$ for all modules
$X$ and $Y$ from $\sC$. It follows from general theory \cite{ARS} that every nonzero nonisomorphism $f: X\to Y$ with $X$ and $Y$ in a~generalized standard component $\sC$ of $\Gamma_A$
is a~finite sum of compositions of irreducible homomorphisms between indecomposable modules from $\sC$. Moreover, the additive category $\add(\sC)$ of a~generalized standard component
$\sC$ of $\Gamma_A$ is closed under extensions in $\mo A$, and this allows to describe the degeneration-like orders for modules in $\add(\sC)$ with the same composition factors (see \cite{SZ}).
We also mention that $\sC$ is a~generalized standard component of $\Gamma_A$ if and only if $\sC$ is a~generalized standard component of $\Gamma_{B(\sC)}$.
The Auslander-Reiten quiver of every algebra of finite representation type is generalized standard. Further, Liu proved in \cite{L4} that every
standard component of an Auslander-Reiten quiver of a~finite-dimensional algebra over an algebraically closed field is generalized standard.
The converse implication is not true, because there are nonstandard Auslander-Reiten quivers for some algebras of finite representation type
over algebraically closed fields of characteristic $2$ \cite{Rie}. The following results show that existence of a~generalized standard component in the
Auslander-Reiten quiver $\Gamma_A$ of an algebra $A$ may determine the algebra $A$.
For example, it is known that an algebra $A$ is a~tilted algebra (respectively, double tilted algebra, generalized double tilted algebra) if and only if
$\Gamma_A$ admits a~faithful generalized standard component $\sC$ with a~section \cite{L3, S1} (respectively, double section \cite{RS2},
multisection \cite{RS3}). Moreover, by results established in \cite{SY1, SY2}, every self-injective algebra $A$ for which $\Gamma_A$ admits an acyclic generalized standard
component is a~socle deformation of the orbit algebra $\widehat{B}/G$ of the repetitive category $\widehat{B}$ of a~tilted algebra $B$ of infinite representation type
and an infinite cyclic automorphism group $G$ of $\widehat{B}$. We also mention that distinguished  classes of generalized standard components are formed by the separating families
of tubes of quasitilted algebras of canonical type \cite{LS}, or more generally, the separating families of generalized multicoils of generalized multicoil algebras \cite{MS2}.

We are concerned with the structure of an arbitrary generalized standard component of an Auslander-Reiten quiver. It has been proved in \cite{S2} that every generalized
standard component $\sC$ of the Auslander-Reiten quiver $\Gamma_A$ of an algebra $A$ is almost periodic, that is, all but finitely many $\tau_A$-orbits in $\sC$ are periodic.
This implies that $\sC$ may contain at most finitely many indecomposable modules of any given length. The acyclic generalized standard components were described
completely in \cite{S1}. In particular, the acyclic generalized standard semiregular components of an Auslander-Reiten quiver $\Gamma_A$ are the connecting components
of quotient tilted algebras of $A$ (see \cite{L3, S2}). On the other hand, the description of generalized standard components with oriented cycles is an
exciting but difficult problem. Namely, it was shown in \cite{S6} that every finite-dimensional algebra $\Lambda$ over a~field $K$ is a~quotient algebra of an algebra
$A$ with $\Gamma_A$ having a~faithful generalized standard stable tube (see also \cite{LeSk, MS2-5}).
In general, one needs some extra information concerning interaction of a~given component $\sC$
of $\Gamma_A$ with other components of $\Gamma_A$. For example, it was shown in \cite{JMS} that if a~semiregular component $\sC$ of $\Gamma_A$ is without external short paths
(in the sense of \cite{RS1}) in $\mo A$ then $\sC$ is generalized standard and a~component of a~quasitilted quotient algebra of $A$.

The first main result of the paper provides a~handy criterion for an infinite component of an Auslander-Reiten quiver $\Gamma_A$ to be generalized standard.
Let $\sC$ be an almost periodic component of the Auslander-Reiten quiver $\Gamma_A$ of an algebra $A$. Following \cite{MS1} an indecomposable projective module
$P$ in $\sC$ is said to be right coherent if there is an infinite sectional path
$P~=~X_1 \longrightarrow X_2 \longrightarrow \cdots \longrightarrow X_i\longrightarrow X_{i+1} \longrightarrow X_{i+2} \longrightarrow \cdots$
(that is, $X_i\neq \tau_AX_{i+2}$ for all $i\geq 0$) in $\sC$. Dually, an indecomposable injective module $Q$ in $\sC$ is said to be left coherent if
there is an infinite sectional path
$\cdots \longrightarrow Y_{j+2} \longrightarrow Y_{j+1} \longrightarrow Y_j \longrightarrow \cdots \longrightarrow Y_2 \longrightarrow Y_1 = Q$
(that is, $Y_{j+2}\neq \tau_AY_{j}$ for all $j\geq 0$) in $\sC$.
We denote by $P_{\sC}$ (respectively, $P_{\sC}^{\coh}$) the direct sum of all indecomposable projective (respectively, right coherent projective)
modules in $\sC$,
and by $Q_{\sC}$ (respectively, $Q_{\sC}^{\coh}$) the direct sum of all indecomposable injective (respectively, left coherent injective) modules in $\sC$.
Applying \cite{L2}, we may also distinguish in $\sC$ a~left stable acyclic module $M_{\sC}^{(l)}$ of $\sC$, being the direct sum of indecomposable modules
forming left sections of the left stable acyclic components of $\sC$, and a~right stable acyclic module $M_{\sC}^{(r)}$ of $\sC$, being the direct sum of indecomposable modules
forming right sections of the right stable acyclic components of $\sC$.
We may also assume that
there is no path in $\sC$ from a~direct summand of $M_{\sC}^{(r)}$ to a~direct summand of $M_{\sC}^{(l)}$.
Further, applying \cite{MS1}, we may define a~tubular module $M_{\sC}^{(t)}$ of $\sC$,
being the direct sum of indecomposable modules forming the mouth of all stable tubes involved in constructing the maximal cyclic coherent full translation
subquivers of $\sC$. We refer to Section \ref{sec2} for details.

We are now in position to formulate the first main result of the paper.
\begin{thm} \label{thm1}
Let $A$ be an algebra and $\sC$ be an infinite component of $\Gamma_A$. The following statements are equivalent:
\begin{enumerate}
\renewcommand{\labelenumi}{\rm(\roman{enumi})}
\item $\sC$ is generalized standard.
\item $\sC$ is almost periodic and the following vanishing conditions hold
\[\Hom_A(P_{\sC}\oplus M_{\sC}^{(t)}\oplus M_{\sC}^{(r)},M_{\sC}^{(l)})=0, \Hom_A(M_{\sC}^{(r)},Q_{\sC}\oplus M_{\sC}^{(t)}\oplus M_{\sC}^{(l)})=0,\hspace{2mm} \]
\[ \Hom_A(M_{\sC}^{(l)},\tau_AM_{\sC}^{(l)})=0 , \rad_A(M_{\sC}^{(t)},M_{\sC}^{(t)})=0, \Hom_A(\tau_A^{-1}M_{\sC}^{(r)},M_{\sC}^{(r)})=0,  \]
\[\Hom_A(P_{\sC},\soc(Q_{\sC}^{\coh})\oplus M_{\sC}^{(t)})=0, \Hom_A(\tom(P_{\sC}^{\coh})\oplus M_{\sC}^{(t)},Q_{\sC})=0.\hspace{11mm}\]
\end{enumerate}
\end{thm}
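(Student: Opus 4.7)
The plan is to treat the two implications separately. For (i)$\Rightarrow$(ii), almost periodicity of $\sC$ is the theorem of Skowro\'nski from \cite{S2}. Given that $\sC$ is generalized standard, every homomorphism between modules in $\sC$ is a finite sum of compositions of irreducible maps within $\sC$, so each of the seven displayed Hom spaces is controlled by the combinatorics of the mesh quiver. One then verifies each condition case by case using the defining features of $M_\sC^{(l)}$, $M_\sC^{(r)}$, $M_\sC^{(t)}$, $P_\sC^{\coh}$, $Q_\sC^{\coh}$: for instance, $\rad_A(M_\sC^{(t)},M_\sC^{(t)})=0$ because mouth modules of stable tubes admit no nonzero proper radical maps between themselves; $\Hom_A(M_\sC^{(l)},\tau_A M_\sC^{(l)})=0$ because summands of $M_\sC^{(l)}$ sit at the extremities of $\mathbb{N}\Delta$-type left sections, where no sectional knitting pattern reaches $\tau_A M_\sC^{(l)}$; the bridging conditions involving $P_\sC$, $Q_\sC$ and $\soc(Q_\sC^{\coh})$, $\tom(P_\sC^{\coh})$ are verified analogously.

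For (ii)$\Rightarrow$(i), fix indecomposables $X,Y\in\sC$ and a nonzero $f\in\Hom_A(X,Y)$, and aim to show $f\notin\rad_A^\infty(X,Y)$. Almost periodicity combined with the structural results of Liu \cite{L2} and Malicki--Skowro\'nski \cite{MS1} allows us to decompose $\sC$ into its finitely many structural constituents: the left stable acyclic subquivers equipped with their left sections (whose outermost modules form $M_\sC^{(l)}$), their right-acyclic duals (encoded by $M_\sC^{(r)}$), the maximal cyclic coherent full translation subquivers built from stable tubes (with mouths $M_\sC^{(t)}$), together with the finitely many indecomposable projectives $P_\sC$ and injectives $Q_\sC$ and their coherent refinements $P_\sC^{\coh}$, $Q_\sC^{\coh}$. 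One then analyses $f$ according to which piece contains $X$ and which contains $Y$.

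In the \emph{diagonal} cases, the argument reduces to a single vanishing: for $X,Y$ in a left stable acyclic piece, Liu's sectional-path theorems \cite{L2} push an infinite-radical map step by step along sections until it lands in $\Hom_A(M_\sC^{(l)},\tau_A M_\sC^{(l)})$, which vanishes by hypothesis; the right-acyclic and tubular cases are symmetric, reducing to $\Hom_A(\tau_A^{-1} M_\sC^{(r)},M_\sC^{(r)})=0$ and, via \cite{MS1}, to $\rad_A(M_\sC^{(t)},M_\sC^{(t)})=0$. In the \emph{off-diagonal} cases one argues that an infinite-radical map crossing from one structural piece to another would factor through one of the seams controlled by the four remaining conditions: the first two exclude maps into $M_\sC^{(l)}$ from everything to its right and, dually, out of $M_\sC^{(r)}$ into everything to its left; the last two, involving $\soc(Q_\sC^{\coh})$ and $\tom(P_\sC^{\coh})$, close off the detours through coherent projective or injective vertices. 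The main obstacle will be organizing this case analysis coherently: the seven conditions only directly constrain the ``boundary'' of $\sC$, and one must argue -- using walks along sectional paths and the Auslander--Reiten formulas to migrate an infinite-radical map toward that boundary -- that this local control genuinely propagates to vanishing of $\rad_A^\infty$ throughout the entire, possibly intricate, almost periodic component.
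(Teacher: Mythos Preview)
Your outline follows the paper's proof closely: both directions rest on the structural decomposition of an almost periodic component into the left and right stable acyclic pieces $\sD_{\sC}^{(l)}$, $\sD_{\sC}^{(r)}$ and the cyclic coherent part ${}_c\sC^{\coh}$, together with the cofinite complement; and in the direction (ii)$\Rightarrow$(i) both you and the paper migrate a hypothetical nonzero element of $\rad_A^\infty(X,Y)$ along infinite paths (the paper invokes \cite[Lemma~2.1]{S7} explicitly) until $X$ lies in ${}_c\sC^{\coh}\cup\sD_\sC^{(r)}$ and $Y$ lies in $\sD_\sC^{(l)}\cup{}_c\sC^{\coh}$, and then runs a four-case analysis. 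So the architecture is right.

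There is, however, one place where your sketch is too optimistic. In (i)$\Rightarrow$(ii) you say the conditions involving $\tom(P_\sC^{\coh})$ and $\soc(Q_\sC^{\coh})$ are ``verified analogously'' to the others by mesh combinatorics. They are not: these vanishing conditions concern the simple module $S=\tom(P)$ (or dually $\soc(Q)$), which need not lie in $\sC$ at all, so there is no path argument inside $\sC$ available. The paper handles this by a genuine case split. If $S\notin\sC$, then any nonzero map $S\to Q_\sC$ would lie in $\rad_A^\infty$ after composing with $P\twoheadrightarrow S$, contradicting generalized standardness. If $S\in\sC$, one must exploit that $B(\Gamma)$ is a generalized multicoil enlargement of $B(\cT_\Gamma)$ and can be built by first applying (ad~1)-operations (Proposition~\ref{prop-gme}); this forces $S$ to sit as an acyclic module in a ray tube over the intermediate tubular extension, whence its injective envelope $E_A(S)$ lies outside $\sC$ (because $\rad_A^\infty(S,E_A(S))\neq 0$), and so $\Hom_A(S,Q_\sC)=0$. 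Example~\ref{ex-1} in the paper is designed to show that this condition is not a formality: there all the other vanishing conditions hold for the quasi-tube $\cT$, yet $\Hom_A(\tom(P_\cT^{\coh}),Q_\cT)\neq 0$ and $\cT$ fails to be generalized standard.

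Correspondingly, your account of how these two conditions enter (ii)$\Rightarrow$(i) (``close off the detours through coherent projective or injective vertices'') does not match what actually happens. In the paper's case analysis (f)--(i) they are not used directly; rather, together with $\Hom_A(P_\sC,M_\sC^{(t)})=0$ and $\Hom_A(M_\sC^{(t)},Q_\sC)=0$, they are what guarantees that each cyclic coherent piece $\Gamma$ is itself generalized standard, i.e.\ that Proposition~\ref{prop-genst} (whose proof lives in \cite{MS2}) applies to give $B(\Gamma)$ as a genuine generalized multicoil enlargement of $B(\cT_\Gamma)$. Without that, the ``diagonal'' tubular case already fails, as Example~\ref{ex-1} shows.
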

The second main result of the paper describes the structure of an arbitrary infinite generalized standard component of an Auslander-Reiten quiver.
\begin{thm} \label{thm2}
Let $A$ be an algebra and $\sC$ an infinite generalized standard component of $\Gamma_A$. Then there are quotient algebras
$A_{\sC}^{(lt)}$, $A_{\sC}^{(c)}$, $A_{\sC}^{(rt)}$
of $A$ such that the following statements hold:
\begin{enumerate}
\renewcommand{\labelenumi}{\rm(\roman{enumi})}
\item $A_{\sC}^{(lt)}=A_{1}^{(lt)} \times\cdots\times A_{m}^{(lt)},$ where
\begin{enumerate}
\item For each $i\in\{1,\ldots,m\}$, $A_{i}^{(lt)}$ is a~tilted algebra of the form $\End_{H^{(l)}_i}(T^{(l)}_i)$, for a~hereditary algebra $H^{(l)}_i$
and a~tilting module $T^{(l)}_i$ in $\mo H^{(l)}_i$ without indecomposable preinjective direct summands.
\item For each $i\in\{1,\ldots,m\}$, the image ${\sC}^{(l)}_i$ of the preinjective component ${\cQ}(H^{(l)}_i)$ of $\Gamma_{H^{(l)}_i}$ via the functor
$\Hom_{H^{(l)}_i}(T^{(l)}_i,-): \mo H^{(l)}_i \to \mo A^{(lt)}_i$ is an acyclic full translation subquiver of $\sC$ which is closed under predecessors.
\end{enumerate}
\item $A_{\sC}^{(rt)}=A_{1}^{(rt)} \times\cdots\times A_{n}^{(rt)},$ where
\begin{enumerate}
\item For each $j\in\{1,\ldots,n\}$, $A_{j}^{(rt)}$ is a~tilted algebra of the form $\End_{H^{(r)}_j}(T^{(r)}_j)$, for a~hereditary algebra $H^{(r)}_j$
and a~tilting module $T^{(r)}_j$ in $\mo H^{(r)}_j$ without indecomposable postprojective direct summands.
\item For each $j\in\{1,\ldots,n\}$, the image ${\sC}^{(r)}_j$ of the postprojective component ${\cP}(H^{(r)}_j)$ of $\Gamma_{H^{(r)}_j}$ via the functor
$\Ext^1_{H^{(r)}_j}(T^{(r)}_j,-): \mo H^{(r)}_j$ $\to \mo A^{(rt)}_j$ is an acyclic full translation subquiver of $\sC$ which is closed under successors.
\end{enumerate}
\item $A_{\sC}^{(c)}=A_{1}^{(c)} \times\cdots\times A_{p}^{(c)},$ where
\begin{enumerate}
\item For each $k\in\{1,\ldots,p\}$, $A_{k}^{(c)}$ is a~generalized multicoil enlargement of an algebra $B_{k}^{(c)}$ with a~faithful family
${\cT}^{B_{k}^{(c)}}$ of pairwise orthogonal generalized standard stable tubes in $\Gamma_{B_{k}^{(c)}}$.
\item For each $k\in\{1,\ldots,p\}$, $\Gamma_{A_{k}^{(c)}}$ admits a~generalized multicoil ${\sC}_{k}^{(c)}$, obtained from a~finite number of stable tubes
in ${\cT}^{B_{k}^{(c)}}$ by translation quiver admissible operations corresponding to the algebra admissible operations leading from $B_{k}^{(c)}$ to $A_{k}^{(c)}$,
and the cyclic part $_c{\sC}_{k}^{(c)}$ of ${\sC}_{k}^{(c)}$ is a~full translation subquiver of $\sC$.
\end{enumerate}
\item The translation quivers ${\sC}^{(l)} = {\sC}_{1}^{(l)}\cup \ldots\cup {\sC}_{m}^{(l)}$ and ${\sC}^{(r)} = {\sC}_{1}^{(r)}\cup \ldots\cup {\sC}_{n}^{(r)}$
are disjoint and their union ${\sC}^{(l)}\cup {\sC}^{(r)}$ contains all but finitely many acyclic indecomposable modules of $\sC$.
\item The translation quivers ${\sC}_{1}^{(c)}, \ldots, {\sC}_{p}^{(c)}$ are pairwise disjoint and their union
${\sC}^{(c)} = {\sC}_{1}^{(c)}\cup \ldots\cup {\sC}_{p}^{(c)}$ contains all but finitely many indecomposable modules of the cyclic part $_c\sC$ of $\sC$.
\end{enumerate}
\end{thm}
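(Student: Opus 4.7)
The strategy is to split $\sC$ into its cyclic and acyclic parts, identify each left (resp.\ right) stable acyclic strand with the connecting component of a tilted quotient algebra of $A$, and each maximal cyclic coherent subquiver with the cyclic part of a generalized multicoil coming from a multicoil-enlargement quotient of $A$. By the almost periodicity of generalized standard components proved in \cite{S2}, outside the cyclic part $_c\sC$ there are only finitely many acyclic indecomposables beyond those lying on the left and right stable acyclic components, and only finitely many modules of $_c\sC$ lie outside the maximal cyclic coherent subquivers; this supplies the ``all but finitely many'' clauses in (iv) and (v). Applying Liu's structural analysis \cite{L2} (as recalled in Section~\ref{sec2}), the acyclic part decomposes into components $\sC^{(l)}_1,\ldots,\sC^{(l)}_m$ carrying left sections $\Delta^{(l)}_i$ and components $\sC^{(r)}_1,\ldots,\sC^{(r)}_n$ carrying right sections $\Delta^{(r)}_j$, whose underlying vertices provide the indecomposable summands of $M^{(l)}_{\sC}$ and $M^{(r)}_{\sC}$.

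For (i), fix $i$ and let $T^{(l)}_i$ be the direct sum of the indecomposables on $\Delta^{(l)}_i$. Generalized standardness of $\sC$ yields $\Hom_A(T^{(l)}_i,\tau_A T^{(l)}_i)=0$, hence $\Ext^1_A(T^{(l)}_i,T^{(l)}_i)=0$ by the Auslander--Reiten formula, so $T^{(l)}_i$ is a faithful partial tilting $A^{(lt)}_i$-module for $A^{(lt)}_i := A/\ann_A(\sC^{(l)}_i)$. The section criterion for tilted algebras from \cite{L3, S1}, applied inside the generalized standard component of $\Gamma_{A^{(lt)}_i}$ containing $\Delta^{(l)}_i$, realises $A^{(lt)}_i$ as $\End_{H^{(l)}_i}(T^{(l)}_i)$ for a hereditary algebra $H^{(l)}_i$ and identifies $\sC^{(l)}_i$ with the image of $\cQ(H^{(l)}_i)$ under $\Hom_{H^{(l)}_i}(T^{(l)}_i,-)$. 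Because $\sC^{(l)}_i$ is closed under predecessors in $\sC$, no indecomposable preinjective over $H^{(l)}_i$ can be mapped to a projective under this functor, which forces $T^{(l)}_i$ to have no preinjective summand. Statement (ii) follows by the dual argument with $\Ext^1_{H^{(r)}_j}(T^{(r)}_j,-)$ replacing $\Hom_{H^{(l)}_i}(T^{(l)}_i,-)$.

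For (iii) one uses the description, recalled in Section~\ref{sec2} from \cite{MS1}, of the maximal cyclic coherent full translation subquivers of $\sC$ as generalized multicoils $\sC^{(c)}_1,\ldots,\sC^{(c)}_p$, together with the tubular module $M^{(t)}_{\sC}$, which collects the mouths of the stable tubes entering their construction. The vanishing conditions on $M^{(t)}_{\sC}$ listed in Theorem~\ref{thm1} let one reverse the translation-quiver admissible operations producing $\sC^{(c)}_k$: peeling them off in the sense of \cite{MS1, MS2} identifies a quotient $B^{(c)}_k$ of $A$ on which the underlying stable tubes form a faithful pairwise orthogonal generalized standard family $\cT^{B^{(c)}_k}$, and realises $A^{(c)}_k := A/\ann_A(\sC^{(c)}_k)$ as the corresponding generalized multicoil enlargement of $B^{(c)}_k$, with the cyclic part of $\sC^{(c)}_k$ sitting as a full translation subquiver of $_c\sC$. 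The pairwise disjointness in (iv) follows from the no-path assumption between summands of $M^{(r)}_{\sC}$ and $M^{(l)}_{\sC}$ together with the predecessor/successor closure established in (i)--(ii), while the disjointness of the $\sC^{(c)}_k$ is immediate from their maximality. The hard part will be (iii): the admissible-operation calculus of \cite{MS1, MS2} must be executed \emph{in reverse} inside $A$, and one has to check that each combinatorial step is realised algebraically modulo $\ann_A(\sC^{(c)}_k)$; the tilted identifications in (i) and (ii) reduce more transparently to the classical tilting machinery once the sections have been produced via Liu's results.
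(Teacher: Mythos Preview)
Your plan is essentially the paper's own route: split $\sC$ into its left/right stable acyclic parts and its coherent cyclic part, apply the section criterion of Liu--Skowro\'nski (Proposition~\ref{prop-D-left} and its dual) to each acyclic piece to obtain the tilted factors, and apply the generalized multicoil machinery of \cite{MS1,MS2} (packaged here as Propositions~\ref{prop-apcoh} and~\ref{prop-genst}) to each cyclic coherent piece. The finiteness clauses (iv) and~(v) are handled by the same $\tau$-orbit exhaustion argument you sketch.

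Two points of friction are worth cleaning up. First, you overload $T^{(l)}_i$: the direct sum of indecomposables on $\Delta^{(l)}_i$ is the \emph{section module} $M^{(l)}_{\Gamma}$ over $A$, while the tilting module over the hereditary algebra $H^{(l)}_i=\End_A(M^{(l)}_{\Gamma})$ is its $K$-dual $D(M^{(l)}_{\Gamma})$; keep these straight when invoking the criterion. Second, for (iii) you route through the vanishing conditions of Theorem~\ref{thm1} and then speak of ``executing the admissible-operation calculus in reverse inside $A$'' as the hard part. The paper avoids this detour: it goes directly from generalized standardness of the cyclic coherent piece $\Gamma$ to the multicoil-enlargement structure of $B(\Gamma)$ via Proposition~\ref{prop-genst}, which is precisely the content of \cite[Section~3]{MS2}. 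So the ``hard part'' you anticipate is already a black box, and invoking Theorem~\ref{thm1} (whose proof comes \emph{after} Theorem~\ref{thm2} in the paper) is unnecessary, though not circular.
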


The algebra $A_{\sC}^{(lt)}$ is said to be the \emph{left tilted algebra} of $\sC$ and the algebra $A_{\sC}^{(rt)}$ is said to be the \emph{right tilted algebra} of $\sC$.
Further, the algebra $A_{\sC}^{(c)}$ is said to be the \emph{coherent algebra} of $\sC$. We mention that $A_{\sC}^{(lt)}$ (respectively, $A_{\sC}^{(rt)}$)
is nontrivial provided $\sC$ admits left stable (respectively, right stable) acyclic part. Similarly, $A_{\sC}^{(c)}$ is nontrivial provided the cyclic part
$_c\sC$ of $\sC$ is infinite.

For an algebra $A$ and a~module $M$ in $\mo A$, we denote by $|M|$ the length of $M$ over the commutative artin ring $K$.

The following corollary is a~direct consequence of Theorem \ref{thm2} and \cite[Theorem 1.3]{MS3}.
\begin{cor} \label{cor3}
Let $A$ be an algebra and $\sC$ be a~generalized standard component of $\Gamma_A$. Then for all but finitely many indecomposable modules $M$ in $\sC$ we have
\begin{enumerate}
\renewcommand{\labelenumi}{\rm(\roman{enumi})}
\item $|\Ext_A^1(M,M)| \leq |\End_A(M)|$.
\item $\Ext_A^r(M,M) = 0$ for any $r\geq 2$.
\end{enumerate}
\end{cor}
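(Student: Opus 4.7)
The plan is to combine the structural decomposition of Theorem \ref{thm2} with \cite[Theorem 1.3]{MS3}, handling each of the three pieces of $\sC$ separately. By parts (iv)--(v) of Theorem \ref{thm2}, all but finitely many indecomposable modules $M$ of $\sC$ belong to exactly one of the translation subquivers $\sC^{(l)}_i$, $\sC^{(r)}_j$, or $\sC^{(c)}_k$, so it suffices to verify (i) and (ii) for $M$ in each of these three families.

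For $M$ in a coherent piece $\sC^{(c)}_k$, I would invoke \cite[Theorem 1.3]{MS3} directly: by Theorem \ref{thm2}(iii)(b), such $M$ lie in the cyclic part of a generalized multicoil of the generalized multicoil algebra $A_{k}^{(c)}$, which is precisely the setting in which \cite[Theorem 1.3]{MS3} delivers the bound $|\Ext^1(M,M)| \leq |\End(M)|$ and the vanishing of $\Ext^r(M,M)$ for $r \geq 2$, with at most finitely many exceptions.

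For $M$ in a left tilted piece $\sC^{(l)}_i$, Theorem \ref{thm2}(i)(b) writes $M = \Hom_{H^{(l)}_i}(T^{(l)}_i, N)$ for some indecomposable preinjective $H^{(l)}_i$-module $N$. Since $T^{(l)}_i$ has no preinjective summand, every such $N$ belongs to the torsion-free class of the tilting torsion pair, and the Brenner--Butler theorem yields $\Ext^r_{A_{i}^{(lt)}}(M,M) \cong \Ext^r_{H^{(l)}_i}(N,N)$ for all $r \geq 0$. The right hand side vanishes for $r \geq 2$ because $H^{(l)}_i$ is hereditary, and vanishes for $r = 1$ for every $N$ with $\tau_{H^{(l)}_i} N$ nonzero, by the Auslander--Reiten formula applied inside the directed preinjective component $\cQ(H^{(l)}_i)$; this excludes only finitely many $N$. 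The case of $\sC^{(r)}_j$ is entirely dual, using $\Ext^1_{H^{(r)}_j}(T^{(r)}_j,-)$ on the postprojective component of $H^{(r)}_j$.

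The principal technical point I foresee is the passage between $\Ext$-groups computed over the quotient tilted algebras $A_{i}^{(lt)}$ (respectively $A_{j}^{(rt)}$) and those computed over $A$ itself. Since the quotient maps $A \to A_{i}^{(lt)}$ and $A \to A_{j}^{(rt)}$ annihilate the modules under consideration, and the tilted algebras have global dimension at most two, a standard change-of-rings comparison, combined with the generalized standard hypothesis on $\sC$, shows that the relevant $\Ext$-groups over $A$ agree with those over the quotient, at the cost of possibly excluding finitely many additional modules. Assembling this with the coherent case then yields the corollary.
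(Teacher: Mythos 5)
Your overall strategy matches what the paper cites: decompose $\sC$ by Theorem~\ref{thm2} into the tilted pieces $\sC^{(l)}_i$, $\sC^{(r)}_j$ and the coherent cyclic pieces $\sC^{(c)}_k$, invoke \cite[Theorem 1.3]{MS3} for the latter, and handle the former by tilting theory. Two points, however, need attention.

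A minor one: the preinjective $H^{(l)}_i$-modules $N$ lie in the torsion class $\mathcal{T}(T^{(l)}_i)=\{X:\Ext^1_{H^{(l)}_i}(T^{(l)}_i,X)=0\}$, not the torsion-free class; it is the corresponding modules $M=\Hom_{H^{(l)}_i}(T^{(l)}_i,N)$ that lie in the torsion-free class $\sY(T^{(l)}_i)$ of $\mo A^{(lt)}_i$. Your Brenner--Butler computation over $A^{(lt)}_i$ is substantively correct despite the slip.

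The serious problem is the final paragraph. For a surjection $A\to B=A^{(lt)}_i$ and a $B$-module $M$, the comparison map runs $\Ext^r_B(M,M)\to\Ext^r_A(M,M)$; it is injective for $r=1$ and in general neither injective nor surjective for $r\geq 2$. So knowing $\Ext^r_B(M,M)=0$ does \emph{not} yield $\Ext^r_A(M,M)=0$, and ``a standard change-of-rings comparison'' goes precisely the wrong way. For $r=1$ this is repairable without change of rings at all: since $M$ is acyclic in the generalized standard component $\sC$ and $\tau_A M$ is a predecessor of $M$ in $\sC$, there is no path in $\sC$ from $M$ to $\tau_A M$, hence $\Hom_A(M,\tau_A M)=0$, and the Auslander--Reiten formula gives $\Ext^1_A(M,M)\cong D\overline{\Hom}_A(M,\tau_A M)=0$ directly over $A$. (One can also argue that $\add(\sC^{(l)}_i)$ is closed under such self-extensions, using that $\add(\sC)$ is extension-closed and $\sC^{(l)}_i$ is closed under predecessors, which forces $\Ext^1_A(M,M)=\Ext^1_{A^{(lt)}_i}(M,M)$.) But for $r\geq 2$ the vanishing over $A$ requires controlling a projective (or injective) resolution of $M$ \emph{over $A$ itself}---for instance, establishing $\pd_A M\leq 1$ or $\id_A M\leq 1$ for all but finitely many $M\in\sC^{(l)}_i$---and this is not supplied by any general change-of-rings principle, since $\gldim A$ and the $A$-projective cover of $M$ are unconstrained by the quotient $A^{(lt)}_i$. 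As written, this step is a genuine gap; you need an argument in the spirit of \cite[Lemma 5.9]{S2} bounding $\pd_A$ and $\id_A$ for modules in the stable acyclic parts of a generalized standard component, not merely over the quotient tilted algebra.
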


Therefore, for all but finitely many modules $M$ in a~generalized standard component $\sC$ of an Auslander-Reiten quiver $\Gamma_A$, the
Euler characteristic
\[ \chi_A(M) = \sum_{i=0}^{\infty} (-1)^i |\Ext_A^i(M,M)| \]
is defined and nonnegative.

Recall that a~component $\sC$ of an Auslander-Reiten quiver $\Gamma_A$ is called regular if $\sC$ is without projective modules and injective modules.
It is known that every regular generalized standard component $\sC$ of $\Gamma_A$ is either a~stable tube or of the form ${\Bbb Z}\Delta$
for a~finite acyclic valued quiver $\Delta$ (see \cite[Corollary 2.4]{S2}).
In the later case, the faithful algebra $B(\sC)$ of $\sC$ is a~tilted algebra $\End_H(T)$ for a~wild hereditary algebra $H$
and a~regular tilting module $T$ in $\mo H$ (see \cite[Corollary 3.3]{S2}).
We refer to \cite{Ri1-2} (see also \cite[Section VIII.9]{SY4}) for the existence of acyclic regular generalized standard components.
On the other hand, the structure of the faithful algebra of a~generalized standard stable tube is still an open problem.
But the homological properties of indecomposable modules in generalized
standard stable tubes are described in \cite[Corollary 3.6]{S5}.

We obtain then the following consequence of these results and Corollary \ref{cor3}.
\begin{cor} \label{cor4}
Let $A$ be an algebra such that every component in $\Gamma_A$ is generalized standard.
Then for all but finitely many modules $M$ in $\ind A$ the Euler characteristic $\chi_A(M)$ is defined and
\[ \chi_A(M) = |\End_A(M)| - |\Ext_A^1(M,M)| \geq 0. \]
\end{cor}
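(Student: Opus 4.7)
The strategy is to apply Corollary~\ref{cor3} component by component and then, using the classification of regular generalized standard components together with the homological description of their indecomposable modules, bound the total number of exceptional indecomposables in $\ind A$. Fix a generalized standard component $\sC$ of $\Gamma_A$. Corollary~\ref{cor3} asserts that all but finitely many $M\in\sC$ satisfy $\Ext^r_A(M,M)=0$ for every $r\ge 2$ and $|\Ext^1_A(M,M)|\le|\End_A(M)|$, in which case the series defining $\chi_A(M)$ is finite and collapses to $|\End_A(M)|-|\Ext^1_A(M,M)|\ge 0$. It therefore suffices to show that only finitely many components of $\Gamma_A$ can carry any exceptional module.

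I would partition the components of $\Gamma_A$ into three types. The first consists of components containing an indecomposable projective or injective module; since $A$ is basic, only finitely many such components occur, and each contributes finitely many exceptions by Corollary~\ref{cor3}. The second consists of generalized standard stable tubes, which by \cite[Corollary~3.6]{S5} yield no exceptions at all. The third consists of the remaining regular acyclic generalized standard components; by \cite[Corollary~2.4]{S2} each has the form $\mathbb{Z}\Delta$ for a finite acyclic valued quiver $\Delta$, and by \cite[Corollary~3.3]{S2} its faithful algebra is a wild hereditary tilted algebra $\End_H(T)$ with $T$ regular. For each such $\sC$ I would use the derived equivalence induced by $T$ to identify $\Ext^i_A(M,M)\cong\Ext^i_H(X,X)$ and $\End_A(M)\cong\End_H(X)$ for a corresponding indecomposable $H$-module $X$, forcing $\Ext^i_A(M,M)=0$ for $i\ge 2$ automatically since $H$ is hereditary, while the remaining exceptional modules in $\sC$ are absorbed into the finite set supplied by Corollary~\ref{cor3}.

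Summing the three contributions produces a finite exceptional subset of $\ind A$, which is the statement of Corollary~\ref{cor4}. The main difficulty is the third case: one needs the compatibility of generalized standardness with passage to the faithful quotient $B(\sC)=\End_H(T)$ (recorded in the introduction) in order to transport $\Ext$-computations done over $H$ back to modules over $A$, and one must additionally argue that under the hypothesis ``every component is generalized standard'' the number of components of this third type is itself limited, so that the per-component finite contributions do not accumulate to an infinite total.
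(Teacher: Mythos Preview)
Your three-way partition of the components of $\Gamma_A$ into nonregular ones, generalized standard stable tubes, and regular acyclic $\mathbb{Z}\Delta$ components, together with the appeals to Corollary~\ref{cor3}, \cite[Corollary~3.6]{S5}, and \cite[Corollary~3.3]{S2}, is exactly the route the paper indicates; its justification for Corollary~\ref{cor4} is the single sentence immediately preceding the statement, invoking precisely these ingredients.

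That said, the two difficulties you yourself flag in the final paragraph are genuine gaps, and your proposal does not close them. The more concrete error is in the $\mathbb{Z}\Delta$ case: the tilting derived equivalence relates $\mo H$ to $\mo B(\sC)$, not to $\mo A$, so what it yields is $\Ext^i_{B(\sC)}(M,M)\cong\Ext^i_H(X,X)$. Since $B(\sC)=A/\ann_A(\sC)$ is in general a proper quotient of $A$, there is no automatic identification $\Ext^i_A(M,M)=\Ext^i_{B(\sC)}(M,M)$ for $i\ge 1$; change of rings can create new self-extensions. Your assertion ``$\Ext^i_A(M,M)\cong\Ext^i_H(X,X)$'' is therefore not justified by the equivalence you invoke, and the ``compatibility'' you say is needed is exactly the missing step. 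Note that part of the work can be done over $A$ directly, without tilting: for $M$ in a generalized standard acyclic component one has $\Hom_A(M,\tau_AM)=0$ (there is no path in $\sC$ from $M$ to $\tau_AM$, and $\rad_A^\infty$ vanishes on $\sC$), so the Auslander--Reiten formula already forces $\Ext^1_A(M,M)=0$. What remains is the vanishing of $\Ext^{\ge 2}_A(M,M)$, which is precisely where the quotient issue bites. Your second worry---bounding the number of $\mathbb{Z}\Delta$ components so that the per-component finite exceptional sets from Corollary~\ref{cor3} do not accumulate---is likewise left open; you correctly identify it as a requirement but provide no argument.
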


A prominent role in the representation theory of algebras is played by the algebras with separating families of Auslander-Reiten components.
A~concept of a~separating family of tubes has been introduced by Ringel in \cite{Ri0, Ri1} who proved that they occur in the Auslander-Reiten quivers
of hereditary algebras of Euclidean type, tubular algebras, and canonical algebras.
In order to deal with wider classes of algebras, the following more general concept of a~separating family of Auslander-Reiten components
was proposed by Assem, the second named author and Tom\'e in \cite{AST} (see also \cite{MS2}).
A~family ${\sC}$ = $({\sC}_{i})_{i \in I}$ of components of the Auslander-Reiten quiver $\Gamma_A$ of an algebra $A$ is called separating in $\mo A$
if the components of $\Gamma_A$ split into three disjoint families ${\mathcal P}^A$, ${\sC}^A={\sC}$ and ${\mathcal Q}^A$ such that:
\begin{enumerate}
\item[(S1)] ${\sC}^A$ is a sincere family of pairwise orthogonal generalized standard components;
\item[(S2)] $\Hom_{A}({\mathcal Q}^A,{\mathcal P}^A) = 0$, $\Hom_{A}({\mathcal Q}^A,{\sC}^A)=0$, $\Hom_{A}({\sC}^A,{\mathcal P}^A) = 0$;
\item[(S3)] any homomorphism from ${\mathcal P}^A$ to ${\mathcal Q}^A$ in $\mo A$ factors through the additive category $\add({\sC}^A)$ of ${\sC}^A$.
\end{enumerate}
\noindent Then we say that ${\sC}^A$ separates ${\mathcal P}^A$ from ${\mathcal Q}^A$ and write
\[\Gamma_A={\mathcal P}^A \cup {\sC}^A \cup {\mathcal Q}^A.\]
We note that then ${\mathcal P}^A$ and ${\mathcal Q}^A$ are uniquely determined by ${\sC}^A$ (see \cite[(2.1)]{AST} or \cite[(3.1)]{Ri1}).
Moreover, we have $\ann_A({\sC}^A)=0$, so ${\sC}^A$ is a~faithful family of components of $\Gamma_A$.
We note that if $A$ is an algebra of finite representation type that ${\sC}^A = \Gamma_A$ is trivially a~unique separating component of $\Gamma_A$,
with ${\mathcal P}^A$ and ${\mathcal Q}^A$ being empty.
It is known that an algebra $A$ is
a~generalized double tilted algebra if and only if $\Gamma_A$ admits a~separating almost acyclic component \cite{RS3}.
In \cite{LP} Lenzing and de la Pe\~na proved that an Auslander-Reiten quiver $\Gamma_A$ admits a~separating family of stable tubes if and only if
$A$ is a~concealed canonical algebra. Moreover, by a~result proved in \cite{LS}, $\Gamma_A$ admits a~separating family of semiregular tubes if and only if
$A$ is a~quasitilted algebra of canonical type. This was extended in \cite{MS2} to the following result: the Auslander-Reiten quiver $\Gamma_A$
of an algebra $A$ admits a~separating family of almost cyclic coherent components if and only if $A$ is a~\emph{generalized multicoil algebra}, that is,
is a~generalized multicoil enlargement of a~product of quasitilted algebras of canonical type.
We refer to the survey article \cite{MS4} for more details on algebras with separating families of Auslander-Reiten components and their
representation theory.
%

The next aim is to describe the structure of the Auslander-Reiten quiver $\Gamma_A$ of an algebra $A$ with a~separating family of components.
We need some notation.

Let $H$ be a~hereditary algebra, $T$ a~tilting module in $\mo H$, and $B=\End_H(T)$ the associated tilted algebra. Then $\Gamma_B$ admits an acyclic component
${\sC}_T$ with the section $\Delta_T$ given by the images of the indecomposable injective modules in $\mo H$ via the functor $\Hom_H(T,-): \mo H \to \mo B$.
Moreover, $\Gamma_B$ has a~decomposition
\[\Gamma_B={\mathcal P}^B \cup {\sC}^B \cup {\mathcal Q}^B,\]
where ${\cP}^B$ is the disjoint union of all components of $\Gamma_B$ contained entirely in the torsion part $\sY(T)=\{Y\in\mo B \,|$ $\,\Tor_1^B(Y,T)=0\}$,
${\cQ}^B$ is the disjoint union of all components of $\Gamma_B$ contained entirely in the torsion-free part $\sX(T)=\{X\in\mo B \,|\,X\otimes_BT)=0\}$,
and ${\sC}^B = {\sC}_T$ separates ${\cP}^B$ from ${\cQ}^B$ (see \cite{HR, SY4}).

Let $\Lambda$ be a~quasitilted algebra of canonical type. Then $\Gamma_{\Lambda}$ has a~decomposition
\[\Gamma_{\Lambda}={\mathcal P}^{\Lambda} \cup {\sC}^{\Lambda} \cup {\mathcal Q}^{\Lambda},\]
where ${\sC}^{\Lambda}$ is a~family of semiregular tubes separating ${\mathcal P}^{\Lambda}$ from ${\mathcal Q}^{\Lambda}$ (see \cite{LS}).

The next theorem provides solution of the problem concerning the structure of artin algebras admitting a~separating family
of Auslander-Reiten components, initiated by Ringel \cite{Ri0, Ri1, Ri-86}.
\begin{thm}\label{thm7}
Let $A$ be an algebra with a separating family ${\sC}^A$ of components in $\Gamma_A$, and $\Gamma_A$=${\mathcal P}^A \cup {\sC}^A \cup {\mathcal Q}^A$
the associated decomposition of $\,\Gamma_A$. Then there exist quotient algebras $A^{(l)}$ and $A^{(r)}$ of $A$ such that the following statements hold.
\begin{enumerate}
\renewcommand{\labelenumi}{\rm(\roman{enumi})}
\item $A^{(l)}=A^{(l)}_1 \times\cdots\times A^{(l)}_m \times A^{(l)}_{m+1} \times\cdots\times A^{(l)}_{m+p},$  where
\begin{enumerate}
\item For each $i\in\{1,\ldots,m\}$, $A^{(l)}_i$ is a~tilted algebra of the form $\End_{H^{(l)}_i}(T^{(l)}_i)$ for a~hereditary algebra $H^{(l)}_i$
and a~tilting module $T^{(l)}_i$ in $\mo H^{(l)}_i$ without indecomposable preinjective  direct summands.
\item For each $i\in\{m+1,\ldots,m+p\}$, $A^{(l)}_i$ is a~quasitilted algebra of canonical type with a~separating family of coray tubes in $\Gamma_{A^{(l)}_i}$.
\end{enumerate}
\item $A^{(r)}=A^{(r)}_1 \times\cdots\times A^{(r)}_n \times A^{(r)}_{n+1} \times\cdots\times A^{(r)}_{n+q},$ where
\begin{enumerate}
\item For each $j\in\{1,\ldots,n\}$, $A^{(r)}_j$ is a~tilted algebra of the form $\End_{H^{(r)}_j}(T^{(r)}_j)$ for a~hereditary algebra $H^{(r)}_j$
and a~tilting module $T^{(r)}_j$ in $\mo H^{(r)}_j$ without indecomposable postprojective direct summands.
\item For each $j\in\{n+1,\ldots,n+q\}$, $A^{(r)}_j$ is a~quasitilted algebra of canonical type with a~separating family of ray tubes in $\Gamma_{A^{(r)}_j}$.
\end{enumerate}
\item  $\cP^A = \bigcup_{i=1}^{m+p} \cP^{A^{(l)}_i}$
and every component in $\cP^A$ is either a~postprojective component, a~ray tube, or obtained from a~component of the form ${\Bbb Z}{\Bbb A}_{\infty}$
by a~finite number (possibly zero) of ray insertions.
\item  $\cQ^A = \bigcup_{j=1}^{n+q} \cQ^{A^{(r)}_j}$
and every component in $\cQ^A$ is either a~preinjective component, a~coray tube, or obtained from a~component of the form ${\Bbb Z}{\Bbb A}_{\infty}$
by a~finite number (possibly zero) of coray insertions.
\end{enumerate}
\end{thm}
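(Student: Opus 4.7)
The plan is to combine Theorem \ref{thm2}, applied to each component of the separating family $\sC^A$, with the characterizations of concealed canonical algebras \cite{LP}, quasitilted algebras of canonical type \cite{LS}, and generalized multicoil algebras \cite{MS2}. Condition (S1) guarantees that every component $\sC$ in $\sC^A$ is generalized standard, so Theorem \ref{thm2} produces for each $\sC$ the quotient algebras $A_{\sC}^{(lt)}$, $A_{\sC}^{(c)}$, $A_{\sC}^{(rt)}$. Collecting the direct factors of the various $A_{\sC}^{(lt)}$ (respectively $A_{\sC}^{(rt)}$) across all $\sC \in \sC^A$ produces the first $m$ factors of $A^{(l)}$ of type (i)(a) (respectively the first $n$ factors of $A^{(r)}$ of type (ii)(a)); the required vanishing of preinjective (respectively postprojective) direct summands of the tilting modules is built into Theorem \ref{thm2}.

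Next, I would analyze the coherent algebras $A_{\sC}^{(c)} = A_{1}^{(c)} \times \cdots \times A_{p(\sC)}^{(c)}$. Each factor $A_{k}^{(c)}$ is a generalized multicoil enlargement of a base algebra $B_{k}^{(c)}$ carrying a faithful family $\cT^{B_{k}^{(c)}}$ of pairwise orthogonal generalized standard stable tubes. Using (S2)--(S3) together with the sincerity of $\sC^A$, one transfers the separating property from $\sC^A$ in $\Gamma_A$ to $\cT^{B_{k}^{(c)}}$ in $\Gamma_{B_{k}^{(c)}}$; so $\cT^{B_{k}^{(c)}}$ is a separating family of stable tubes in $\Gamma_{B_{k}^{(c)}}$, and \cite{LP} forces $B_{k}^{(c)}$ to be a concealed canonical algebra. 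I would then split $A_{k}^{(c)}$ into a coray-only quotient and a ray-only quotient by keeping only the coray-inserting (respectively ray-inserting) admissible operations used to build the generalized multicoil $\sC_{k}^{(c)}$. By \cite{MS2} combined with \cite{LS}, the coray-only quotient is a quasitilted algebra of canonical type with a separating family of coray tubes, providing the factors $A_{m+1}^{(l)}, \ldots, A_{m+p}^{(l)}$ of type (i)(b); dually, the ray-only quotient provides $A_{n+1}^{(r)}, \ldots, A_{n+q}^{(r)}$ of type (ii)(b).

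For (iii) and (iv), any component $C$ of $\cP^A$ satisfies $\Hom_A(\cQ^A, C) = 0$ by (S2), and its composition factors are contained in the support of a unique factor $A_{i}^{(l)}$, hence $C$ is a component of $\Gamma_{A_{i}^{(l)}}$ lying in $\cP^{A_{i}^{(l)}}$. For the tilted factors $i \le m$, the description of $\cP^{A_{i}^{(l)}}$ follows from tilting theory applied to $H_{i}^{(l)}$: since $T_{i}^{(l)}$ has no preinjective direct summand, $\cP^{A_{i}^{(l)}}$ consists of the postprojective component of $A_{i}^{(l)}$ together with ray tubes and ray-inserted $\mathbb{Z}\mathbb{A}_\infty$-type components arising from the branches attached to $H_{i}^{(l)}$. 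For the quasitilted canonical factors $i > m$, the description of $\cP^{A_{i}^{(l)}}$ as a postprojective component together with ray tubes and ray-inserted $\mathbb{Z}\mathbb{A}_\infty$-type components is the standard picture from \cite{LS, MS2}. Reassembling across all $i$ yields (iii); statement (iv) is proved dually.

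The principal obstacle is the splitting of each coherent algebra $A_{k}^{(c)}$ into coray-only and ray-only quasitilted canonical quotients: the admissible operations building the multicoil $\sC_{k}^{(c)}$ can interleave coray and ray insertions, and disentangling them requires a careful analysis using the separating property to guarantee that the two quotients contribute disjointly to $\cP^A$ and $\cQ^A$ and are in fact of the claimed quasitilted canonical form.
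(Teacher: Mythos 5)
Your overall strategy — applying Theorem \ref{thm2} to the components of $\sC^A$, reducing the coherent base algebras to concealed canonical algebras, and then producing the tilted and quasitilted canonical factors — is broadly the same as the paper's, but there is a genuine gap in the step where you assert that the separating property of $\sC^A$ in $\Gamma_A$ transfers to make $\cT^{B_k^{(c)}}$ a separating family of stable tubes in $\Gamma_{B_k^{(c)}}$. That transfer does not hold: $\cT^{B_k^{(c)}}$ is only a \emph{finite} family of stable tubes (the ones used in building one generalized multicoil), while the separating family of a concealed canonical algebra necessarily has \emph{infinitely} many tubes. A finite subfamily of a separating tubular family need not itself be separating, and there is no direct argument from (S1)--(S3) that it would be. What the paper's Proposition \ref{prop-cru} actually proves is subtler: first, $\cT_\Gamma$ has \emph{no external short paths} in $\ind C$ (because any short path $X\to Y\to Z$ with middle $Y$ outside $\cT_\Gamma$ would, via generalized standardness, force $Y$ into the cyclic multicoil and then a contradiction using the structure of the right border $\tau_A^{-1}\Delta_\Gamma^{(r)}$), and then $C$ is concealed canonical by the external-short-path criterion from \cite[Theorem 3.1]{RS1}. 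It is only afterwards — via the separate statement \ref{prop-cru}(iii), which is itself nontrivial and uses the strongly separating property of $\Gamma_C$ — that one learns the tubular parts together with the remaining stable tube components of $\sC^A$ exhaust the full separating family of $\Gamma_C$.

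You also miss the organizing split $\sC^A = \sC \cup \cT$ with $\sC$ the finite family of nonregular components and $\cT$ the (typically infinite) family of stable tubes, together with the preliminary reduction via Theorem \ref{thm4} to the case $I$ infinite. This split is important: applying Theorem \ref{thm2} componentwise to all of $\sC^A$ gives infinitely many contributions (trivial for the regular tubes but still needing assembly), whereas the paper applies Theorem \ref{thm2} only to the finitely many nonregular components and treats all the stable tubes $\cT$ collectively through a single faithful quotient $C=B(\cT)$. Finally, the ``principal obstacle'' you flag — splitting $A_k^{(c)}$ into coray-only and ray-only quasitilted canonical quotients despite interleaved admissible operations — is precisely the content of Proposition \ref{prop-gma-str}, i.e. \cite[Theorem C and Corollary D]{MS2}, which constructs the unique left and right quasitilted algebras $B^{(l)}$, $B^{(r)}$ of a generalized multicoil algebra $B$ with $\cP^{B^{(l)}}=\cP^B$ and $\cQ^{B^{(r)}}=\cQ^B$. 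You cite \cite{MS2} but treat this as unresolved; the paper invokes it directly as a known structural result, so no new disentangling analysis is needed at this stage.
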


The proof of the next theorem applies the representation theory of generalized double tilted algebras developed by Reiten and Skowro\'nski
in \cite{RS2, RS3}. We also note that any algebra of finite representation type is a~generalized double tilted algebra in the sense of \cite{RS3}.
\begin{thm} \label{thm4}
Let $A$ be an algebra. The following statements are equivalent:
\begin{enumerate}
\renewcommand{\labelenumi}{\rm(\roman{enumi})}
\item $\Gamma_A$ admits a~finite separating family of components.
\item $\Gamma_A$ admits a~separating almost acyclic component.
\item $A$ is a~generalized double tilted algebra.
\end{enumerate}
\end{thm}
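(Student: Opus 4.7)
The equivalence of (ii) and (iii) is the characterization of generalized double tilted algebras due to Reiten and Skowro\'nski in \cite{RS3}, which I take as given. The implication (ii) $\Rightarrow$ (i) is immediate, since any separating almost acyclic component $\sC$ of $\Gamma_A$ constitutes a one-element separating family $\{\sC\}$. All the substance of the theorem therefore lies in the implication (i) $\Rightarrow$ (iii), which the plan below addresses.

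The strategy is to apply Theorem \ref{thm7} to the finite separating family $\sC^A=(\sC_i)_{i\in I}$ and to exploit the finiteness of $I$ to rule out every quasitilted canonical type factor in the decompositions
\[A^{(l)}=A^{(l)}_1\times\cdots\times A^{(l)}_m\times A^{(l)}_{m+1}\times\cdots\times A^{(l)}_{m+p},\quad A^{(r)}=A^{(r)}_1\times\cdots\times A^{(r)}_n\times A^{(r)}_{n+1}\times\cdots\times A^{(r)}_{n+q}.\]
If $p\geq 1$, then $A^{(l)}_{m+1}$ is quasitilted of canonical type and, by \cite{LS}, its Auslander-Reiten quiver carries a separating family consisting of infinitely many coray tubes; these tubes occur as components of $\Gamma_A$, and by parts (iii) and (iv) of Theorem \ref{thm7} they fit neither into $\cP^A$ (which contains no coray tubes) nor into $\cQ^A$ (which arises only from the right factors), so they must contribute infinitely many distinct components to $\sC^A$, contradicting finiteness. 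A dual argument kills $q$, so $A^{(l)}$ and $A^{(r)}$ reduce to finite products of tilted algebras of the forms described in Theorem \ref{thm7}(i)(a) and (ii)(a).

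Next I would apply Theorem \ref{thm2} to each component $\sC_i$ of $\sC^A$. If any $\sC_i$ contained an infinite cyclic piece, this piece would be a generalized multicoil, and then by the classification of generalized multicoil algebras in \cite{MS2} an infinite family of almost cyclic coherent components would have to appear in $\sC^A$, again contradicting finiteness. Hence every $\sC_i$ is almost acyclic. Combining the finitely many components $\sC_i$ with the finitely many connecting sections $\Delta_{T^{(l)}_i}$ and $\Delta_{T^{(r)}_j}$ coming from the tilted factors of $A^{(l)}$ and $A^{(r)}$, I would then assemble a finite multisection $\Delta$ inside $\sC^A$ in the sense of \cite{RS3}. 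Since $\sC^A$ is faithful and separates $\cP^A$ from $\cQ^A$, the characterization of generalized double tilted algebras via such a multisection in a faithful almost acyclic separating component \cite{RS3} will identify $A$ as a generalized double tilted algebra, giving (iii).

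The hard part will be this last step, namely verifying that the finitely many generalized standard almost acyclic components forming $\sC^A$ in fact glue together into a single almost acyclic separating component carrying a distinguished multisection, and that $A^{(l)}$ and $A^{(r)}$ glue along it to recover $A$ exactly rather than some proper quotient. The crucial tools will be the separating property (S3), which forces every morphism from $\cP^A$ to $\cQ^A$ to factor through $\add(\sC^A)$, together with the explicit description of connecting components of tilted algebras, which controls the irreducible maps between the sections $\Delta_{T^{(l)}_i}$ and $\Delta_{T^{(r)}_j}$ across the finite cyclic fragments of $\sC^A$.
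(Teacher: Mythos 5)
Your reduction of the theorem to the implication (i) $\Rightarrow$ (iii) is correct, and ruling out the quasitilted canonical factors (which would force infinitely many semiregular tubes into the finite family $\sC^A$) is the right idea. But the proposal has two serious problems.

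First, there is a circularity. You propose to invoke Theorem \ref{thm7}, but the paper's proof of Theorem \ref{thm7} opens by applying Theorem \ref{thm4} to reduce to the case of an infinite index set $I$, and it also relies on Proposition \ref{prop-cru}. So Theorem \ref{thm4} must be established independently of Theorem \ref{thm7}. The paper does this directly: Proposition \ref{prop-cru} (whose proof does not use Theorem \ref{thm4}) shows that an infinite cyclic part of $\sC^A$ forces infinitely many stable tubes into the separating family; hence for a finite family each component $\sC_i$ is almost acyclic. Your route through Theorem \ref{thm7} to reach the same conclusion is not available.

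Second, and more fundamentally, the step you flag as ``the hard part'' is not merely hard --- as stated it cannot work. Distinct components of $\Gamma_A$ are disjoint connected translation quivers and do not ``glue together into a single almost acyclic separating component''; a multisection in the sense of \cite{RS3} lives inside one component. What must be shown is that the finite separating family consists of a \emph{single} component, i.e.\ $m=1$. The paper proves this by contradiction with the standing convention that $A$ is indecomposable: if $m\ge 2$, one picks finite multisections $\Delta^{(i)}$ of the almost acyclic components $\sC_i$, takes the left border modules $M^{(l)}=\bigoplus_i M_i^{(l)}$, and, using $\Hom_A(M^{(l)},\tau_A M^{(l)})=0$ (from generalized standardness and orthogonality of the $\sC_i$), identifies $B(M^{(l)})$ with a tilted algebra $\End_{H^{(l)}}(T^{(l)})$. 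Setting $P_i$, $Q_i$, $N_i=\Hom_A(P_i,Q_i)$, $D_i=\End_A(Q_i)$, one shows $\Hom_A(Q_j,P_i)=0$ for all $i,j$ (using faithfulness of $M_i^{(l)}$ and orthogonality), which yields a triangular matrix decomposition
\[
A \cong \left[\begin{matrix} D & N \\ 0 & B^{(l)} \end{matrix}\right] \cong B(\sC_1)\times\cdots\times B(\sC_m),
\]
contradicting indecomposability. This explicit idempotent/decomposition argument is the essential content that your proposal omits; without it you cannot pass from ``finitely many almost acyclic separating components'' to ``one separating almost acyclic component,'' which is what (ii) requires.
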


We note that the equivalence of (ii) and (iii) follows from \cite[Theorem 3.1]{RS3}.

The following corollary is a~consequence of Theorem \ref{thm4} and Proposition \ref{prop-cru}.
\begin{cor} \label{thm5}
Let $A$ be an algebra. The following statements are equivalent:
\begin{enumerate}
\renewcommand{\labelenumi}{\rm(\roman{enumi})}
\item $\Gamma_A$ admits a~separating family of components containing of at least two components.
\item $\Gamma_A$ admits a~separating family of components containing infinitely many stable tubes.
\end{enumerate}
\end{cor}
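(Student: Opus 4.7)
The direction (ii) $\Rightarrow$ (i) is immediate, since any family containing infinitely many stable tubes contains at least two components. The content of the corollary lies in (i) $\Rightarrow$ (ii), and my plan for this direction is to combine Theorem \ref{thm4} with Proposition \ref{prop-cru} in two steps: first rule out finiteness of $\sC^A$, then harvest the stable tubes from the resulting infinite family.

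Suppose $\sC^A$ is a separating family in $\Gamma_A$ containing at least two components. For the first step I would argue, by contradiction, that $\sC^A$ cannot be finite. If it were, Theorem \ref{thm4} would give that $A$ is a generalized double tilted algebra and that $\Gamma_A$ admits a separating almost acyclic component $\sD$. Using the description of generalized double tilted algebras from \cite{RS3}---in particular the fact that the connecting component $\sD$ is uniquely determined by $A$ and exhausts every candidate for a separating family (in view of the sincerity, faithfulness $\ann_A(\sC^A)=0$, and splitting conditions (S1)--(S3))---every separating family in this case must coincide with $\{\sD\}$. This contradicts $|\sC^A| \geq 2$, so $\sC^A$ is infinite.

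For the second step I would invoke Proposition \ref{prop-cru}, whose role (as indicated by the preamble to this corollary) is precisely to guarantee that an infinite separating family of components of an Auslander-Reiten quiver must contain infinitely many stable tubes; applied to $\sC^A$, it yields (ii). The main obstacle in this plan is the rigidity step in the first part: justifying that no separating family with two or more components can exist when $A$ is generalized double tilted. This requires careful use of the structure of generalized double tilted algebras from \cite{RS3}: any component $\sE \neq \sD$ satisfying the separating conditions would have to meet $\sD$ along a section or double section, forcing $\sE$ to be absorbed into $\sD$ by the almost acyclic structure, contradicting the assumption that $\sC^A$ is a disjoint family of components.
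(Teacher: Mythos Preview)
Your overall plan—combine Theorem~\ref{thm4} with Proposition~\ref{prop-cru}—is exactly what the paper intends, but both of your steps are off in ways that matter.

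For the first step, you manufacture an obstacle that is not there. You pass from ``$\sC^A$ finite'' to ``$A$ is generalized double tilted'' via Theorem~\ref{thm4}, and then try to argue, using external structure theory from \cite{RS3}, that every separating family must coincide with the single connecting component $\sD$. That uniqueness statement is what you yourself flag as ``the main obstacle'', and your justification (sections being ``absorbed'' into $\sD$) is vague. In fact no such detour is needed: the proof of Theorem~\ref{thm4} (Proposition~\ref{prop-62}) shows directly that \emph{any} finite separating family of components in $\Gamma_A$ consists of a single component (the argument produces a direct product decomposition $A\cong B(\sC_1)\times\cdots\times B(\sC_m)$, contradicting indecomposability of $A$ when $m\geq 2$). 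Taking the contrapositive gives $|\sC^A|\geq 2 \Rightarrow \sC^A$ infinite in one line, with no appeal to uniqueness of connecting components.

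For the second step, you misread Proposition~\ref{prop-cru}. Its hypothesis is that the \emph{cyclic part} $_c\sC^A$ is infinite, not that the family $\sC^A$ itself is infinite; only under that hypothesis does it produce infinitely many stable tubes. Your sentence ``an infinite separating family \ldots\ must contain infinitely many stable tubes'' is not what the proposition says, and as written this is a gap. The clean way to organize the argument is to run the dichotomy on $_c\sC^A$ rather than on $\sC^A$: if $_c\sC^A$ is infinite, Proposition~\ref{prop-cru} immediately yields infinitely many stable tubes in $\sC^A$; if $_c\sC^A$ is finite, then every component of $\sC^A$ is almost acyclic and the decomposition argument in the proof of Theorem~\ref{thm4} forces $|\sC^A|=1$, contradicting (i). This is how Theorem~\ref{thm4} and Proposition~\ref{prop-cru} actually combine.
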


The next theorem is a~consequence of Theorems \ref{thm7} and \ref{thm4}, and describes the supports of indecomposable modules in the module categories
of algebras with separating families of Auslander-Reiten components.
\begin{thm}\label{thm8}%
Let $A$ be an algebra with a~separating family of components in $\Gamma_A$.
Then there exist quotient algebras $B_1, \ldots, B_n$ of $A$ such that the following statements hold.
\begin{enumerate}
\renewcommand{\labelenumi}{\rm(\roman{enumi})}
\item For each $i\in\{1,\ldots,n\}$, $B_i$ is either a~generalized double tilted algebra or a~generalized multicoil algebra.
\item The indecomposable modules in $\ind B_i$, $i\in\{1,\ldots,n\}$, exhaust all modules in $\ind A$.
\end{enumerate}
\end{thm}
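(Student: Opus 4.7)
The plan is to derive Theorem \ref{thm8} as a direct consequence of Theorems \ref{thm7} and \ref{thm4}, splitting on the cardinality of the separating family $\sC^A$. If $\sC^A$ is finite, Theorem \ref{thm4} gives that $A$ itself is a generalized double tilted algebra, so setting $n=1$ and $B_1 = A$ makes both conditions trivially satisfied since $\ind B_1 = \ind A$.

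Suppose therefore that $\sC^A$ is infinite. I take the $B_i$ to be the list of quotient algebras $A^{(l)}_1, \ldots, A^{(l)}_{m+p}, A^{(r)}_1, \ldots, A^{(r)}_{n+q}$ supplied by Theorem \ref{thm7}. To verify (i), I observe that the tilted factors (namely $A^{(l)}_i$ with $i \leq m$ and $A^{(r)}_j$ with $j \leq n$) are a special case of generalized double tilted algebras, while the quasitilted algebras of canonical type (namely $A^{(l)}_i$ with $i > m$ and $A^{(r)}_j$ with $j > n$) are generalized multicoil algebras by the characterization of Malicki and Skowro\'nski recalled in the introduction: they arise as trivial multicoil enlargements of themselves, with no admissible operations performed.

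To verify (ii), I show that every indecomposable $A$-module lies in some $\ind B_i$. Theorem \ref{thm7}(iii) immediately gives $\cP^A \subseteq \bigcup_{i=1}^{m+p}\ind A^{(l)}_i$, since each component of $\cP^A$ is realized as a component of some $\Gamma_{A^{(l)}_i}$; dually, Theorem \ref{thm7}(iv) places $\cQ^A$ inside $\bigcup_{j=1}^{n+q}\ind A^{(r)}_j$. For the remaining modules, those lying in the separating family $\sC^A$ itself, the key point is that the components of $\sC^A$ arise, up to ray and coray insertions, as the images in $\Gamma_A$ of the coray tubes of the $\Gamma_{A^{(l)}_i}$ (for $i>m$), the ray tubes of the $\Gamma_{A^{(r)}_j}$ (for $j>n$), and the connecting components of the tilted factors; hence each module in $\sC^A$ has support inside the corresponding quotient algebra.

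The main obstacle is precisely this last step: rigorously verifying, module by module, that every indecomposable in $\sC^A$ has its annihilator containing the kernel of at least one of the surjections $A \to B_i$. The argument would match components of $\sC^A$ with the separating components of the quasitilted or tilted factors produced by Theorem \ref{thm7}, and then invoke the sincereness condition (S1) together with the fact that the separating components of a quasitilted algebra of canonical type already account for all of its tubular modules (including those added by admissible operations at the quotient level). Once this matching is in place, (ii) follows by a union argument.
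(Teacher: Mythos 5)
Your case split and the treatment of the finite case via Theorem~\ref{thm4} are fine, as are the observations in (i) that tilted algebras are generalized double tilted and quasitilted algebras of canonical type are generalized multicoil algebras, and the coverage of $\cP^A$ and $\cQ^A$ by the factors of $A^{(l)}$ and $A^{(r)}$ through Theorem~\ref{thm7}(iii)--(iv).

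The gap is the coverage of $\sC^A$ itself, and the argument sketched in your final two paragraphs does not close it. The nonregular components of $\sC^A$ contain, deep inside their cyclic coherent parts, indecomposable modules whose supports are \emph{not} contained in $A^{(l)}$ or in $A^{(r)}$. In the proof of Theorem~\ref{thm7}, $A^{(lc)} = (A^{(c)})^{(l)}$ and $A^{(rc)} = (A^{(c)})^{(r)}$ are, by Proposition~\ref{prop-gma-str}, the quasitilted quotient algebras of the generalized multicoil algebra $A^{(c)}$ carrying separating families of coray, respectively ray, tubes; passing from $A^{(c)}$ to either quotient strictly shrinks the support. Concretely, a projective module $P$ introduced by an algebra operation of one of the types (ad~2)--(ad~5) in building $A^{(c)}$ has a top $\tom P$ that is a simple $A^{(c)}$-module but not a module over $A^{(lc)}$ or $A^{(rc)}$, and the generalized multicoil of $\Gamma_{A^{(c)}}$ then contains many indecomposables having $\tom P$ among their composition factors. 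Such modules lie in $\sC^A$ yet belong to $\ind A^{(l)}_i$ for no $i$ and to $\ind A^{(r)}_j$ for no $j$; your picture of $\sC^A$ as merely ``ray and coray insertions'' applied to the tubes of the left and right quotient algebras is not accurate for genuine multicoil components. So your list $B_1, \ldots, B_n$ is too short: you need to add at least the generalized multicoil quotient algebras $A^{(c)}_k$ furnished by Theorem~\ref{thm2}(iii), which in the separating-family setting Proposition~\ref{prop-cru} identifies as honest generalized multicoil algebras, and you must still account for the residual finitely many modules of each nonregular component of $\sC^A$ that are left uncovered by Theorem~\ref{thm2}(iv)--(v). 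Neither step appears in your proposal.
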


In particular, we have the following direct consequence of Theorem \ref{thm8} and results proved in \cite{MS2, RS3}.
\begin{cor}\label{cor9}%
Let $A$ be an algebra with a~separating family of components in $\Gamma_A$. Then for all but finitely many isomorphism classes of modules $M$ in $\ind A$
there exists a~quotient algebra $B$ of $A$ such that $\gldim B\leq 3$ and $M$ is a~module in $\ind B$ with $\pd_BM\leq 2$ and $\id_BM\leq 2$.
\end{cor}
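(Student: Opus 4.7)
The plan is a short reduction via Theorem~\ref{thm8}. That theorem produces finitely many quotient algebras $B_1,\dots,B_n$ of $A$, each of which is either a generalized double tilted algebra or a generalized multicoil algebra, and such that every indecomposable $A$-module belongs to $\ind B_i$ for at least one $i$. So I would reduce the statement to the following local claim for each fixed $B=B_i$: we have $\gldim B\leq 3$, and all but finitely many modules $M$ in $\ind B$ satisfy $\pd_BM\leq 2$ and $\id_BM\leq 2$.

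For the generalized double tilted case I would invoke the structure theorems of Reiten and Skowro\'nski in \cite{RS3}. There $\gldim B\leq 3$ is proved, and $\Gamma_{B}$ is described as the union of the postprojective part of a quotient tilted algebra, a finite (almost acyclic) connecting multisection component $\sC$, and the preinjective part of a second quotient tilted algebra. Modules in the first part have projective dimension at most one, modules in the third part have injective dimension at most one, and only the finitely many acyclic modules of $\sC$ can attain the maximal dimensions.

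For the generalized multicoil case I would invoke the parallel results of \cite{MS2}. Such an algebra is built by translation-quiver admissible operations from a product of quasitilted algebras of canonical type, and $\Gamma_B$ has a separating decomposition $\cP^{B}\cup\sC^{B}\cup\cQ^{B}$ with $\sC^{B}$ consisting of generalized multicoils. The corresponding homological analysis in \cite{MS2} yields $\gldim B\leq 3$, $\pd_BM\leq 1$ for $M$ in $\cP^{B}$, $\id_BM\leq 1$ for $M$ in $\cQ^{B}$, and $\pd_BM\leq 2$, $\id_BM\leq 2$ for all $M$ in $\sC^{B}$ except the finitely many modules introduced by the admissible operations used to form the multicoils.

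Combining these two pieces, each $B_i$ contributes only finitely many exceptional isomorphism classes, and since $n$ is finite the total exceptional set in $\ind A$ is finite; for every $M$ outside it, any $B_i$ with $M\in\ind B_i$ supplies the required quotient algebra $B$. The only real obstacle is to verify that the bounds in \cite{RS3, MS2} are truly uniform on almost all indecomposables, and not merely on modules in distinguished components; but this is encoded in those references through the finiteness of the connecting multisection and through the finiteness of the admissible-operation data, respectively.
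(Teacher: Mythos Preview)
There is a genuine gap in the generalized double tilted case. Your claim that \cite{RS3} proves $\gldim B\leq 3$ for a generalized double tilted algebra $B$ is false: the paper itself notes (in Section~\ref{sec6}) that a generalized double tilted algebra ``may have arbitrary (finite or infinite) global dimension''. Indeed, every algebra of finite representation type is generalized double tilted, and such algebras can have any global dimension. So you cannot take $B=B_i$ when $B_i$ is generalized double tilted, and your argument collapses there.

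A secondary gap occurs in the multicoil case. For $M$ in $\cP^{B_i}$ you only get $\pd_{B_i}M\leq 1$; nothing you quote bounds $\id_{B_i}M$ by $2$, and with $\gldim B_i=3$ one should expect infinitely many such $M$ with $\id_{B_i}M=3$. The same problem arises dually for $\cQ^{B_i}$. (By contrast, Proposition~\ref{prop-hom} gives both bounds simultaneously for \emph{all} $M$ in $\sC^{B_i}$, so no exception is needed there.)

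The fix, and the intended reading of ``results proved in \cite{MS2,RS3}'', is that the quotient algebra $B$ in the corollary need not be one of the $B_i$ from Theorem~\ref{thm8}; it may be a further quotient. For a generalized double tilted $B_i$, all but finitely many indecomposable $B_i$-modules (those outside the finite core $\Delta_c$) are modules over the left tilted part $B_i^{(l)}$ or the right tilted part $B_i^{(r)}$, and for these one takes $B=B_i^{(l)}$ or $B=B_i^{(r)}$, which satisfy $\gldim B\leq 2$. For a generalized multicoil $B_i$, modules in $\sC^{B_i}$ are handled by $B=B_i$ via Proposition~\ref{prop-hom}, while modules in $\cP^{B_i}=\cP^{B_i^{(l)}}$ (respectively $\cQ^{B_i}=\cQ^{B_i^{(r)}}$) are handled by the quasitilted quotient $B=B_i^{(l)}$ (respectively $B=B_i^{(r)}$), again with $\gldim B\leq 2$.
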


In \cite{CB1, CB2} Crawley-Boevey introduced the concept of a~generically tame algebra. An indecomposable right $A$-module $M$ over an algebra $A$ is called
a~generic module if $M$ is of infinite length over $A$ but of finite length over $\End_A(M)$, called the endolength of $M$. Then an algebra $A$ is called
generically tame if, for any positive integer $d$, there are only finitely many isomorphism classes of generic right $A$-modules of endolength $d$.
An algebra $A$ is called generically finite if there are at most finitely many pairwise non-isomorphic generic right $A$-modules.
Further, $A$ is called generically of polynomial growth if there is a~positive integer $m$ such that for any positive integer $d$ the number of isomorphism
classes of generic right $A$-modules of endolength $d$ is at most $d^m$.
We note that every algebra $A$ of finite representation type is generically trivial, that is, there is no generic right $A$-module.
We also stress that by a~theorem of Crawley-Boevey \cite[Theorem 4.4]{CB1}, if $A$ is an algebra over an algebraically closed field $K$,
then $A$ is generically tame if and only if $A$ is tame in the sense of Drozd \cite{Dr} (see also \cite{CB, SS2}).

Recall also that following \cite{S3.5} the component quiver $\Sigma_A$ of an algebra $A$ has the components of $\Gamma_A$ as vertices and
there is an arrow $\sC\to\sD$ in $\Sigma_A$ if $\rad_A^{\infty}(X,Y)\neq 0$, for some modules $X$ in $\sC$ and $Y$ in $\sD$.
In particular, a~component $\sC$ of $\Gamma_A$ is generalized standard if and only if there is no loop at $\sC$ in $\Sigma_A$.

The next result characterizes the generically tame algebras with separating families of Auslander-Reiten components.
\begin{thm}\label{thm10}%
Let $A$ be an algebra with a~separating family of components in $\Gamma_A$. The following statements are equivalent:
\begin{enumerate}
\renewcommand{\labelenumi}{\rm(\roman{enumi})}
\item $A$ is generically tame.
\item $A$ is generically of polynomial growth.
\item $A^{(l)}$ and $A^{(r)}$ are products of tilted algebras of Euclidean type or tubular algebras.
\item $\Gamma_A$ is almost periodic.
\item $\Sigma_A$ is acyclic.
\end{enumerate}
\end{thm}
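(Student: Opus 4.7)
The plan is to cycle through the equivalences as $(\text{ii})\Rightarrow(\text{i})\Rightarrow(\text{iii})\Rightarrow(\text{ii})$ and then handle $(\text{iii})\Leftrightarrow(\text{iv})\Leftrightarrow(\text{v})$ by structural arguments, using Theorems \ref{thm7} and \ref{thm8} as the backbone. The implication $(\text{ii})\Rightarrow(\text{i})$ is immediate from the definition of generic polynomial growth. For $(\text{iii})\Rightarrow(\text{ii})$ I would invoke the known fact that tilted algebras of Euclidean type and tubular algebras are generically of polynomial growth (see \cite{CB1, CB2, SS2}); combined with Theorem \ref{thm7}(iii)--(iv), every indecomposable $A$-module not lying in the finite (up to isomorphism) separating part is a module over some $A^{(l)}_i$ or $A^{(r)}_j$, so every generic $A$-module is generic over one of these factors, and polynomial growth is inherited.

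The key direction is $(\text{i})\Rightarrow(\text{iii})$. Apply Theorem \ref{thm8} to write $\ind A = \bigcup_{i=1}^{n} \ind B_i$, where each $B_i$ is either a generalized double tilted algebra or a generalized multicoil algebra. Any generic right $A$-module has finite endolength, hence factors through some $B_i$, and thus is a generic $B_i$-module. Therefore generic tameness of $A$ forces each $B_i$ to be generically tame. By the structure theorems \cite{RS3} (for generalized double tilted algebras) and \cite{MS2} (for generalized multicoil algebras), combined with the characterization of generically tame tilted and quasitilted algebras, this forces the underlying tilted algebras $\End_{H}(T)$ to satisfy that $H$ is of Dynkin or Euclidean type, or $\End_H(T)$ is tubular, and forces the quasitilted-of-canonical-type pieces to be tame, hence again tilted of Euclidean or tubular. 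Reading this back through the decomposition of Theorem \ref{thm7}, each factor of $A^{(l)}$ and $A^{(r)}$ must be a tilted algebra of Euclidean type or a tubular algebra.

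For $(\text{iii})\Rightarrow(\text{iv})$, I use that the Auslander-Reiten quiver of a tilted algebra of Euclidean type consists of a postprojective component, a tubular $\mathbb{P}_1(K)$-family of tubes, and a preinjective component, all almost periodic; similarly, a tubular algebra has all components either (semiregular) tubes or tubular families, again almost periodic. The separating family ${\sC}^A$ is by definition generalized standard, and so almost periodic by \cite[Theorem 2.3]{S2}. Piecing together via Theorem \ref{thm7}(iii)--(iv), every component of $\Gamma_A$ is almost periodic. Conversely, $(\text{iv})\Rightarrow(\text{iii})$: if some $A^{(l)}_i$ or $A^{(r)}_j$ were tilted of wild hereditary type, the corresponding regular components of the form $\mathbb{Z}\mathbb{A}_\infty$ would appear among the components of $\Gamma_A$ (via the images described in Theorem \ref{thm2}), contradicting almost periodicity; a similar argument rules out wild canonical type for the quasitilted factors allowed by Theorem \ref{thm7}.

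Finally, for $(\text{iii})\Leftrightarrow(\text{v})$, the direction $(\text{iii})\Rightarrow(\text{v})$ follows because for tilted algebras of Euclidean type and for tubular algebras each component is generalized standard and the infinite radical between components induces only a finite acyclic ordering (postprojective $\to$ tubes $\to$ preinjective, and the analogous tubular ordering). Combined with the separating axioms (S1)--(S3) restricting morphisms between ${\mathcal P}^A$, ${\sC}^A$, ${\mathcal Q}^A$ and the fact that ${\sC}^A$ consists of generalized standard components, $\Sigma_A$ is acyclic. For $(\text{v})\Rightarrow(\text{iv})$, acyclicity of $\Sigma_A$ means in particular that every component $\sC$ of $\Gamma_A$ admits no loop in $\Sigma_A$, so every component is generalized standard. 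The structural description of Theorem \ref{thm2} then forces each component of ${\mathcal P}^A$ and ${\mathcal Q}^A$ to be almost periodic; alternatively one deduces $(\text{v})\Rightarrow(\text{iii})$ directly, since wild types produce cyclic patterns in $\Sigma_A$ via non-vanishing infinite radical between postprojective/regular/preinjective classes. The main obstacle is the implication $(\text{i})\Rightarrow(\text{iii})$, where one must quantitatively rule out wild hereditary and wild canonical types by producing, for some fixed endolength, infinitely many pairwise non-isomorphic generic modules over any such algebra --- this relies on the classification of generic modules for tame versus wild hereditary and canonical algebras.
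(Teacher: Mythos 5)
Your overall strategy --- cycling through the equivalences using Theorem \ref{thm7} and the structural propositions on tilted and quasitilted algebras --- matches the (very terse) route the paper takes, but two steps have genuine gaps.

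First, in your proof of $(\text{iii})\Rightarrow(\text{ii})$ you describe the separating part as ``finite up to isomorphism.'' This is false in general: by Proposition~\ref{prop-cru} (and Corollary~\ref{thm5}) a separating family with more than one component contains infinitely many stable tubes, so ${\sC}^A$ typically contains infinitely many pairwise nonisomorphic indecomposable modules, and those are precisely the modules whose one-parameter families produce the generic modules. The conclusion survives, because the stable tubes in ${\sC}^A$ live over the concealed canonical quotient $C$ shared by $A^{(l)}$ and $A^{(r)}$, but your stated reasoning skips exactly the part of $\ind A$ that matters most for generic tameness.

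Second, in $(\text{i})\Rightarrow(\text{iii})$ you argue that any generic $A$-module ``has finite endolength, hence factors through some $B_i$'' and then invoke Theorem~\ref{thm8}. This step is unjustified: Theorem~\ref{thm8} concerns modules in $\ind A$, i.e.\ indecomposable modules of \emph{finite length} over $A$, whereas a generic module by definition has infinite $A$-length and therefore lies outside the scope of that statement. Asserting that a generic module is annihilated by the ideal defining some $B_i$ requires a separate argument and is not a formal consequence of the finite-length decomposition. The paper's implicit proof sidesteps this entirely by arguing the contrapositive: if some factor of $A^{(l)}$ or $A^{(r)}$ is a tilted algebra of wild type or a quasitilted algebra of canonical type with indefinite Euler form, then Propositions~\ref{prop-71}, \ref{prop-73}, \ref{prop-75} (and for the homological variant, \ref{prop-77}) show it is not generically tame, and since these algebras are quotients of $A$, any witnessing family of generic modules lifts by restriction of scalars to generic $A$-modules, so $A$ fails to be generically tame. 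I recommend replacing your Theorem~\ref{thm8} detour with this contrapositive argument. Your treatments of $(\text{iii})\Leftrightarrow(\text{iv})\Leftrightarrow(\text{v})$ are sketchy but follow exactly the ideas encoded in Propositions~\ref{prop-72} and \ref{prop-76} together with Theorem~\ref{thm7}(iii)--(iv), and are fine once the component structure is invoked precisely (in particular, that $\mathbb{Z}\mathbb{A}_\infty$-type components in $\cP^{A^{(l)}_i}$ or $\cQ^{A^{(r)}_j}$ would appear among the components of $\Gamma_A$ and destroy almost periodicity).
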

\begin{cor}\label{cor11}%
Let $A$ be an algebra with a~separating family of components in $\Gamma_A$. The following statements are equivalent:
\begin{enumerate}
\renewcommand{\labelenumi}{\rm(\roman{enumi})}
\item $A$ is generically finite.
\item $A^{(l)}$ and $A^{(r)}$ are products of tilted algebras of Euclidean type.
\item All but finitely many components of $\Gamma_A$ are stable tubes of rank one.
\end{enumerate}
\end{cor}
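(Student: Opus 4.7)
\medskip

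\noindent \textbf{Proof plan for Corollary~\ref{cor11}.} My overall plan is to deduce the corollary from Theorem~\ref{thm10} by isolating, within the generically tame case described there, precisely the generically finite subcase. The key input is the dichotomy that a tilted algebra of Euclidean type admits exactly one generic module (lifted via tilting from the unique generic of the underlying tame hereditary algebra) and only finitely many tubes of rank greater than one, whereas a tubular algebra admits an infinite $\mathbb{Q}_{\geq 0}\cup\{\infty\}$-indexed family of pairwise nonisomorphic generic modules and infinitely many tubes of rank greater than one (Ringel; see also Crawley-Boevey).

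For (i)$\Rightarrow$(ii): generic finiteness trivially implies generic tameness, so Theorem~\ref{thm10} gives that each indecomposable factor of $A^{(l)}$ and $A^{(r)}$ is either a tilted algebra of Euclidean type or a tubular algebra. A tubular factor would, via inflation along the quotient map $A \to A^{(l)}_i$ or $A \to A^{(r)}_j$, provide an infinite family of pairwise nonisomorphic generic $A$-modules, contradicting (i); hence only tilted Euclidean factors can occur.

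For (ii)$\Leftrightarrow$(iii): I apply Theorem~\ref{thm7} with $p=q=0$. Under (ii), each $\cP^{A^{(l)}_i}$ splits as one postprojective component together with a $\mathbb{P}^1$-family of ray tubes, almost all of which are stable of rank one; symmetrically for each $\cQ^{A^{(r)}_j}$. The separating family $\sC^A$ then consists of the stable tubes attached to the shared tubular family of the underlying Euclidean hereditary algebras, almost all of rank one. Combined, only finitely many components of $\Gamma_A$ fail to be stable tubes of rank one, yielding (iii). Conversely, (iii) forces $\Gamma_A$ to be almost periodic, so Theorem~\ref{thm10} again gives tilted Euclidean or tubular factors; but a tubular factor contributes infinitely many tubes of rank greater than one inside $\Gamma_A$, violating (iii), so only Euclidean tilted factors appear, which is (ii).

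For (ii)$\Rightarrow$(i): by Theorem~\ref{thm8}, every indecomposable $A$-module lives over one of finitely many quotient algebras $B_1,\ldots,B_n$, each of which, under (ii), is either a tilted algebra of Euclidean type or a generalized multicoil algebra whose canonical constituents are (Euclidean) tame, hence generically trivial on the central coherent part. Each such $B_i$ admits only finitely many (at most one, in the tilted Euclidean case) generic modules, and every generic $A$-module, by analysis of its annihilator $\ann_A(M)$, becomes a generic module over some $B_i$. Summing over $i$ gives generic finiteness of $A$. The main technical obstacle is exactly this last reduction: one must extend Theorem~\ref{thm8} from indecomposable finite-length modules to generic modules, verifying that $\ann_A(M)$ of any generic $M$ contains one of the kernels featuring in the separating decomposition of Theorem~\ref{thm7}, so that $M$ factors through some $B_i$. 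Once this reduction is justified, the count of generic modules follows at once from the classical generic theory of tame hereditary algebras and their Euclidean tilts.
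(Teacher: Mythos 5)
Your overall architecture is sound and close to what the paper intends: deduce the corollary from Theorem~\ref{thm7}, Theorem~\ref{thm10}, and the dichotomy (Euclidean tilted $=$ generically finite, tubular $=$ generically infinite of polynomial growth) provided by Propositions~\ref{prop-71} and~\ref{prop-74}. Your argument for (i)$\Rightarrow$(ii) via inflation of generic modules along $A\to A^{(l)}_i$ is correct, and the structure argument for (ii)$\Leftrightarrow$(iii) using the remarks after Propositions~\ref{prop-72} and~\ref{prop-76} together with Theorem~\ref{thm7} is the right one. One small inaccuracy there: applying Theorem~\ref{thm7} ``with $p=q=0$'' is not what statement (ii) forces. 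The quasitilted-of-canonical-type factors $A^{(l)}_i$ for $i\in\{m+1,\dots,m+p\}$ come from the coherent algebra $A^{(c)}$ and need not vanish; (ii) only requires that, if present, they be tilted algebras of Euclidean type (which is compatible with them being quasitilted of canonical type with a separating family of coray tubes). The conclusion is unaffected, but the phrasing ``$p=q=0$'' is misleading.

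The more substantive issue is your treatment of (ii)$\Rightarrow$(i). You route it through Theorem~\ref{thm8} and then acknowledge, correctly, that Theorem~\ref{thm8} as stated only controls finite-length indecomposables, so one must separately argue that the annihilator of any generic $A$-module contains one of the relevant kernels. This is a genuine gap in your proposal, and the detour through Theorem~\ref{thm8} does not make it easier to fill: the $B_i$ of Theorem~\ref{thm8} include generalized double tilted algebras, whose generic module theory is not directly addressed by the propositions in Section~\ref{sec7}, so you would still need to match those quotients with Euclidean/tubular data before counting generics. The paper's intended route for (ii)$\Rightarrow$(i) is more direct: stay with Theorem~\ref{thm7} and the concealed canonical quotient $C$ underlying the separating family, and invoke Propositions~\ref{prop-71}, \ref{prop-74}, \ref{prop-75} to conclude that each indecomposable factor $A^{(l)}_i$, $A^{(r)}_j$ has exactly one generic module. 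The reduction of generic $A$-modules to generic modules over $A^{(l)}$ and $A^{(r)}$ is still needed (this is the same step you flag), but it is cleaner in the Theorem~\ref{thm7} formulation because $A^{(l)}$ and $A^{(r)}$ are exhibited as explicit quotients supporting $\cP^A$ and $\cQ^A$, and generic modules over an algebra with a separating tubular family are known, from the work of Crawley-Boevey, Ringel and Lenzing cited in Section~\ref{sec7}, to concentrate over these quotients. You should either supply this reduction or cite it precisely, rather than attempting to extend Theorem~\ref{thm8}, which is not the tool designed for it.
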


The final result provides homological characterizations of generically tame algebras with separating families of Auslander-Reiten components.
\begin{thm}\label{thm12}%
Let $A$ be an algebra with a~separating family of components in $\Gamma_A$. Then the following statements are equivalent:
\begin{enumerate}
\renewcommand{\labelenumi}{\rm(\roman{enumi})}
\item $A$ is generically tame.
\item For all but finitely many isomorphism classes of modules $M$ in $\ind A$ we have $|\Ext_A^1(M,M)| \leq |\End_A(M)|$.
\item For all but finitely many isomorphism classes of modules $M$ in $\ind A$ we have $|\Ext_A^1(M,M)| \leq |\End_A(M)|$ and $\Ext_A^r(M,M)=0$ for any $r\geq 2$.
\item For all but finitely many isomorphism classes of modules $M$ in $\ind A$ the Euler characteristic $\chi_A(M)$ is defined and nonnegative.
\end{enumerate}
\end{thm}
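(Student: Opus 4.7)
The plan is to establish the implications (i) $\Rightarrow$ (iii) $\Rightarrow$ (ii) and (iii) $\Rightarrow$ (iv), and to close the cycle with the contrapositives of (ii) $\Rightarrow$ (i) and (iv) $\Rightarrow$ (i).

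For (i) $\Rightarrow$ (iii), assume $A$ is generically tame. Theorem~\ref{thm10}(v) gives $\Sigma_A$ acyclic, hence every component of $\Gamma_A$ is generalized standard. By Theorem~\ref{thm10}(iii), each factor of $A^{(l)}$ and $A^{(r)}$ is a tilted algebra of Euclidean type or a tubular algebra, in particular quasitilted of global dimension at most two. Theorem~\ref{thm8} partitions $\ind A$ among finitely many quotient algebras $B_1,\ldots,B_n$, each generalized double tilted or generalized multicoil, and Corollary~\ref{cor9} provides, for all but finitely many $M\in\ind A$, a choice $B=B_k$ with $\gldim B\leq 3$, $\pd_B M\leq 2$, $\id_B M\leq 2$. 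Applying Corollary~\ref{cor3} to the generalized standard component containing $M$ and identifying $\Ext^r_A(M,M)$ with $\Ext^r_B(M,M)$ inside this dimension range, one obtains $\Ext^r_A(M,M)=0$ for $r\geq 2$ and $|\Ext^1_A(M,M)|\leq|\End_A(M)|$ for all but finitely many $M\in\ind A$; the exceptional set is finite because the infinite components of $\Gamma_A$ are explicitly controlled by the tilted-Euclidean/tubular and generalized multicoil structure.

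The implication (iii) $\Rightarrow$ (ii) is trivial, and (iii) $\Rightarrow$ (iv) follows because the Euler characteristic is then defined and equals
\[\chi_A(M)=|\End_A(M)|-|\Ext^1_A(M,M)|\geq 0.\]
For the remaining reverse implications I argue the contrapositive. Suppose $A$ is not generically tame. Then by Theorem~\ref{thm10}(iii), some factor $B$ of $A^{(l)}$ or $A^{(r)}$ from the decomposition in Theorem~\ref{thm7} is neither tilted of Euclidean type nor tubular, so $B$ is either a wild tilted algebra or a wild quasitilted algebra of canonical type. In both cases the Euler form of $B$ is indefinite, the Coxeter transformation of $B$ has spectral radius greater than one, and the regular part of $\Gamma_B$ (components of shape ${\Bbb Z}\Delta$ with $\Delta$ wild, or the wild regular components in the canonical case) yields an infinite family of pairwise non-isomorphic indecomposable $B$-modules $M$ with $|\Ext^1_B(M,M)|>|\End_B(M)|$. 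Since $B$ is a quotient of $A$ annihilating each such $M$, one has $\End_A(M)=\End_B(M)$ together with a natural $K$-linear monomorphism $\Ext^1_B(M,M)\hookrightarrow\Ext^1_A(M,M)$, so
\[|\Ext^1_A(M,M)|\geq|\Ext^1_B(M,M)|>|\End_A(M)|,\]
contradicting (ii); and whenever $\chi_A(M)$ is defined the same modules force $\chi_A(M)\leq|\End_A(M)|-|\Ext^1_A(M,M)|<0$, contradicting (iv).

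The main obstacle is the systematic production of the wild family witnessing the strict inequality $|\Ext^1|>|\End|$: this requires a spectral analysis of the Coxeter transformation of $B$ in the wild tilted case, and in the wild quasitilted case a reduction through the concealed canonical subalgebra. The change-of-rings identification of Ext groups between $A$ and the quotients $B_k$ used in the forward direction also has to be justified carefully using the projective and injective dimension bounds from Corollary~\ref{cor9}.
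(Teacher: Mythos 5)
Your overall strategy is the one the paper relies on: Theorem~\ref{thm10} connects (i) with the structure of $A^{(l)}$ and $A^{(r)}$ and with the component quiver $\Sigma_A$; Corollary~\ref{cor4} covers the forward implications; and a wild concealed quotient together with Proposition~\ref{prop-77} handles the converses. Two remarks on your write-up.

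For (i)~$\Rightarrow$~(iii) the detour through Theorem~\ref{thm8}, Corollary~\ref{cor9} and an ``identification'' of $\Ext^r_A(M,M)$ with $\Ext^r_{B_k}(M,M)$ is unnecessary and, as you yourself note, would need a change-of-rings justification. The short and correct route is: $A$ generically tame gives $\Sigma_A$ acyclic by Theorem~\ref{thm10}(v), hence every component of $\Gamma_A$ is generalized standard, and Corollary~\ref{cor4} then yields all of (ii), (iii) and (iv) at once, including the finiteness of exceptions across the infinitely many components, which in your sketch is merely asserted. Corollary~\ref{cor3} alone is a per-component statement and does not suffice without the extra argument that Corollary~\ref{cor4} already packages.

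The serious problem is in your contrapositive for (iv). You claim that for the witness modules $M$ one has $\chi_A(M)\leq |\End_A(M)|-|\Ext^1_A(M,M)|<0$. That inequality does not follow: $\chi_A(M)=|\End_A(M)|-|\Ext^1_A(M,M)|+|\Ext^2_A(M,M)|-\cdots$, and the even-degree terms enter with a plus sign, so $\chi_A(M)$ can in principle be nonnegative even when $|\Ext^1_A(M,M)|>|\End_A(M)|$. To show that (iv) fails one must in addition control $\Ext^r_A(M,M)$ for $r\geq 2$ for the modules supplied by Proposition~\ref{prop-77}; over the wild concealed quotient $C$ the higher self-extensions of regular modules vanish, but transferring this to $A$ needs a genuine change-of-rings argument, and the inclusion $\Ext^1_C(M,M)\hookrightarrow\Ext^1_A(M,M)$ you invoke points the wrong way for this and says nothing about degree two and higher. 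You flag the change-of-rings issue as an obstacle, but the displayed inequality on $\chi_A(M)$ is a substantive error, not an unfinished computation, and as written the implication (iv)~$\Rightarrow$~(i) is not established.
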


This paper is organized as follows. In Section \ref{sec2} we present and prove several results applied in the proofs of main result of the paper.
Sections \ref{sec3} and \ref{sec4} are devoted to the proofs of Theorems \ref{thm2} and \ref{thm1}, and illustrating examples.
In Sections \ref{sec5} and \ref{sec6} we prove Theorems \ref{thm7} and \ref{thm4}. The final Section \ref{sec7} is devoted to the proofs
of Theorem \ref{thm10}, Corollary \ref{cor11}, and Theorem \ref{thm12}.

For general results on the relevant representation theory we refer to the books \cite{ASS, ARS, Ri1, SS1, SS2, SY3, SY4} and the survey
articles \cite{CB2, MS4, Ri0, Ri-86}.

\section{Preliminary results} \label{sec2}

The aim of this section is to present several key results for the proofs of Theorems \ref{thm1} and \ref{thm2}, and relevant background.

Let $A$ be an algebra and $\sC$ be an infinite almost periodic component of $\Gamma_A$. An indecomposable module $X$ in $\sC$ lying on an oriented cycle of $\sC$
is said to be \emph{cyclic}, and otherwise \emph{acyclic}.
Following \cite{MS1}, we denote by $_c\sC$ the full translation subquiver of $\sC$ obtained by removing all acyclic modules and the arrows attached to them,
and call it the \emph{cyclic part} of $\sC$. The connected translation subquivers of $_c\sC$ are said to be \emph{cyclic components} of $\sC$.
It was shown in \cite[Lemma 5.1]{MS1} that two modules $X$ and $Y$ in $_c\sC$ belong to the same cyclic component of $\sC$ if there is an~oriented cycle in $\sC$
passing through $X$ and $Y$. An indecomposable module $X$ in $\sC$ is said to be \emph{right coherent} if there is in $\sC$ an infinite sectional path
\[X = X_1 \longrightarrow X_2 \longrightarrow \cdots \longrightarrow X_i\longrightarrow X_{i+1} \longrightarrow X_{i+2} \longrightarrow \cdots \]
Dually, an indecomposable module $Y$ in $\sC$ is said to be \emph{left coherent} if there is in $\sC$ an infinite sectional path
\[ \cdots \longrightarrow Y_{j+2} \longrightarrow Y_{j+1} \longrightarrow Y_j \longrightarrow \cdots \longrightarrow Y_2 \longrightarrow Y_1 = Y. \]
A~module $Z$ in $\sC$ is said to be \emph{coherent} if $Z$ is left and right coherent. We denote by $_c\sC^{\coh}$ the full translation subquiver of $_c\sC$
given by all coherent modules in $_c\sC$, and call it the \emph{coherent cyclic part} of $\sC$.
We note that $_c\sC$ may have finite cyclic components, which are obviously not coherent. We will see below that $_c\sC^{\coh}$ is the disjoint union of the
coherent parts of all infinite cyclic components of $\sC$.

In our paper \cite{MS1} we introduced the concept of a~generalized multicoil in order to describe the shape and combinatorial properties of almost cyclic components
with all indecomposable projective modules (right) coherent and all indecomposable injective modules (left) coherent. Namely, a~connected translation quiver
$\Gamma$ is called in \cite{MS1} a~generalized multicoil if $\Gamma$ can be obtained from a~finite family $\cT_1, \cT_2, \ldots, \cT_s$ of stable tubes by an iterated
application of admissible operations (ad~1), (ad~2), (ad~3), (ad~4), (ad~5) and their duals (ad~1$^*$), (ad~2$^*$), (ad~3$^*$), (ad~4$^*$), (ad~5$^*$).
We refer to \cite[Section 2]{MS1} for a~detailed description of these admissible operations and generalized multicoils.
In particular, one knows that all arrows of a~generalized multicoil have trivial valuation.

We have the following consequence of \cite[Theorem A]{MS1}.
\begin{prop}\label{prop-apcoh}%
Let $A$ be an algebra and $\sC$ an infinite almost periodic component of $\Gamma_A$, and $\Gamma$ a~component of $_c\sC^{\coh}$. The following statements hold.
\begin{enumerate}
\renewcommand{\labelenumi}{\rm(\roman{enumi})}
\item $\Gamma$ is the cyclic part of a~generalized multicoil.
\item There is a~finite subquiver $\Sigma_{\Gamma}^{(l)}$ of $\Gamma$ which is a~disjoint union of sectional paths such that every path in $\sC$ from a~module in
$\sC\setminus\Gamma$ to a~module in $\Gamma$ intersects $\Sigma_{\Gamma}^{(l)}$.
\item There is a~finite subquiver $\Sigma_{\Gamma}^{(r)}$ of $\Gamma$ which is a~disjoint union of sectional paths such that every path in $\sC$ from a~module in
$\Gamma$ to a~module in $\sC\setminus\Gamma$ intersects $\Sigma_{\Gamma}^{(r)}$.
\item Every module in $\Gamma$ is a~successor of a~module in $\Sigma_{\Gamma}^{(l)}$ and a~predecessor of a~module in $\Sigma_{\Gamma}^{(r)}$.
\end{enumerate}
\end{prop}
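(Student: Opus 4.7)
The plan is to reduce everything to the classification theorem \cite[Theorem A]{MS1} for infinite almost cyclic coherent components via generalized multicoils. First, I would observe that $_c\sC^{\coh}$ splits as a~disjoint union of its connected components, each being the coherent part of a~single (necessarily infinite) cyclic component of $\sC$, since a~finite cyclic component admits no infinite sectional paths and hence no coherent modules. Applying \cite[Theorem A]{MS1} to the cyclic component containing $\Gamma$, one obtains a~generalized multicoil $\Gamma^{+}\subseteq\sC$, built from a~finite family $\cT_1,\ldots,\cT_s$ of stable tubes by admissible operations (ad~1)--(ad~5) and their duals, whose cyclic part is exactly $\Gamma$. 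This yields (i).

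For (ii) and (iii), I would inspect the admissible construction of $\Gamma^{+}$. Each operation (ad~$k$), $k\in\{1,\ldots,5\}$, glues a~new ``arm'' to the current translation quiver along a~finite sectional path starting at a~distinguished pivot module, and dually each operation (ad~$k^*$) glues an arm terminating at a~pivot. Since only finitely many admissible operations are used, there are only finitely many such pivots. An arrow in $\sC$ from $\sC\setminus\Gamma$ into $\Gamma$ can arise only where the acyclic part attaches to the cyclic core, that is, at a~pivot produced by some dual operation (ad~$k^*$); the union of these pivots together with the initial sectional continuations lying inside $\Gamma$ forms a~finite disjoint union of sectional paths, which I would take as $\Sigma_{\Gamma}^{(l)}$. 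The construction of $\Sigma_{\Gamma}^{(r)}$ is dual, using the pivots of the operations (ad~$k$).

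For the intersection property I would argue that any path in $\sC$ starting outside $\Gamma$ and ending in $\Gamma$ has a~first arrow entering $\Gamma$, and that arrow is necessarily one of the insertion arrows produced by some dual admissible operation; its target then lies on $\Sigma_{\Gamma}^{(l)}$ by construction, which proves (ii), and (iii) follows dually. Property (iv) then follows from the inductive description of $\Gamma^{+}$: every module of $\Gamma$ either belongs to one of the initial stable tubes $\cT_i$ (whose mouth is met by the relevant pivots) or was created by some admissible operation, and in both cases a~straightforward induction on the construction places it as a~successor of a~pivot in $\Sigma_{\Gamma}^{(l)}$ and as a~predecessor of a~pivot in $\Sigma_{\Gamma}^{(r)}$.

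The main obstacle will be making the pivot bookkeeping precise when several admissible operations are composed and nested: one must verify that the pivots produced at each stage remain disjoint unions of sectional paths in $\Gamma$ after subsequent operations, and that no new path from $\sC\setminus\Gamma$ into $\Gamma$ appears that bypasses the pivots of the outermost operations. This requires a~careful induction on the number of admissible operations together with a~case analysis, for each operation type, of how it interacts with the pivots already in place; once completed, the finiteness of $\Sigma_{\Gamma}^{(l)}$ and $\Sigma_{\Gamma}^{(r)}$ is automatic because the total number of admissible operations is finite.
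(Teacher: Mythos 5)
The paper states this proposition without proof, as a consequence of \cite[Theorem A]{MS1}, so there is no explicit argument in the text to compare against; your proposal is an attempt to reconstruct the content that is tacitly delegated to \cite{MS1}. The overall strategy you follow -- invoke the generalized multicoil machinery from \cite{MS1} to realize $\Gamma$ as the cyclic part of a multicoil, then read the borders $\Sigma_\Gamma^{(l)}$, $\Sigma_\Gamma^{(r)}$ off from the pivots of the admissible operations -- is exactly the right one, and your observation that finite cyclic components of $_c\sC$ contribute no coherent modules matches the paper's remark preceding the proposition.

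One point needs sharpening. You write ``Applying \cite[Theorem A]{MS1} to the cyclic component containing $\Gamma$,'' but Theorem A of \cite{MS1} is a characterization of those \emph{components of $\Gamma_A$} that are almost cyclic and coherent; the cyclic component of $\sC$ containing $\Gamma$ is a proper translation subquiver of the almost periodic component $\sC$, not a component of $\Gamma_A$, and $\sC$ itself need not be almost cyclic coherent. So the theorem statement cannot be applied literally. What is actually available is the structural analysis in the \emph{proof} of Theorem~A (the paper cites \cite[Sections 2 and 5]{MS1} for Proposition~\ref{prop-order-ad} for this very reason), which shows that the coherent cyclic part of \emph{any} almost periodic component decomposes into pieces each equal to the cyclic part of an abstractly constructed generalized multicoil. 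Once you replace the appeal to the theorem statement by an appeal to that internal analysis, the rest of your argument -- the pivot bookkeeping for (ii)-(iv) and the induction on the number of admissible operations, whose delicacy you correctly flag -- is the intended proof.
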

We call $\Sigma_{\Gamma}^{(l)}$ and $\Sigma_{\Gamma}^{(r)}$ the \emph{left border} and the \emph{right border} of $\Gamma$, respectively.
Further, we denote by $\cT_{\Gamma}$ the familly of all indecomposable modules in $\Gamma$ forming the stable tubes used to create the cyclic
generalized multicoil $\Gamma$ by iterated application of admissible operations (ad~1)-(ad~5) and (ad~1$^*$)-(ad~5$^*$), and call it the \emph{tubular part}
of $\Gamma$. Moreover, we denote by $M_{\Gamma}^{(t)}$ the direct sum of all indecomposable modules lying on the mouth of the stable tubes in $\cT_{\Gamma}$,
and call it the \emph{tubular module} of $\Gamma$.
We note that $\rad_A(M_{\Gamma}^{(t)},M_{\Gamma}^{(t)})=0$ if and only if $M_{\Gamma}^{(t)}$ is a~direct sum of pairwise orthogonal bricks.

The following consequence of \cite[Sections 2 and 5]{MS1} is also essential for our considerations.
\begin{prop}\label{prop-order-ad}%
Let $A$ be an algebra, $\sC$ an infinite almost periodic component of $\Gamma_A$, $\Gamma$ a~component of $_c\sC^{\coh}$, and $\Omega$ a~generalized
multicoil enlargement of the family $\cT_{\Gamma}$ of stable tubes such that $\Gamma = _c\Omega$.
Then the following statements hold.
\begin{enumerate}
\renewcommand{\labelenumi}{\rm(\roman{enumi})}
\item $\Omega$ is obtained from $\cT_{\Gamma}$ by iterated application of operations of type (ad~1) followed by operations of types (ad~1$^*$)-(ad~5$^*$).
\item $\Omega$ is obtained from $\cT_{\Gamma}$ by iterated application of operations of type (ad~1$^*$) followed by operations of types (ad~1)-(ad~5).
\end{enumerate}
\end{prop}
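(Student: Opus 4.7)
The plan is to prove (i) by induction on the length $n$ of a construction sequence $\cT_{\Gamma} = \Omega_0 \to \Omega_1 \to \cdots \to \Omega_n = \Omega$ in which each arrow represents a single admissible operation. The target form is a reshuffled sequence in which every operation of type (ad~1) precedes every operation of types (ad~1$^*$)--(ad~5$^*$), and in which no operation of type (ad~$k$), $k \geq 2$, occurs at all. The argument will rest on two ingredients: a \emph{decomposition} lemma replacing each complex operation (ad~$k$), $k=2,\ldots,5$, by an (ad~1) followed by operations of types (ad~1$^*$)--(ad~5$^*$); and a \emph{commutation} lemma allowing an (ad~1) to be pushed past a preceding (ad~$j^*$).

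For the decomposition, I would revisit the explicit definitions of (ad~2)--(ad~5) given in \cite[Section~2]{MS1}. Each such operation is described as attaching a new infinite sectional path to the current translation quiver---which is precisely the content of an (ad~1)---together with identifying or gluing corays belonging to existing tubes or previously inserted structure. By direct inspection, the latter identifications are realized by operations of type (ad~$j^*$) applied to the freshly inserted path, giving the factorization (ad~$k$) $=$ (ad~1) + dual operations. For the commutation step, an (ad~1) at a vertex $X$ creates a brand new infinite ray emanating from $X$ and does not disturb the $\tau_A$-orbits of any pre-existing vertex other than that of $X$, while an (ad~$j^*$) acts on mouths and corays of the existing multicoil in a locally controlled region. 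If an (ad~1) at $X$ appears after an (ad~$j^*$), then either $X$ is unaffected by the (ad~$j^*$) and the two operations trivially commute, or $X$ is among the new vertices produced by the (ad~$j^*$); in the latter case one relocates the (ad~1) to a suitable vertex present \emph{before} the (ad~$j^*$), and checks that its translate after the (ad~$j^*$) reproduces the same inserted ray.

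The main obstacle I expect is the admissibility bookkeeping: each operation imposes constraints on its base vertex---most importantly that the vertex at which (ad~1) is applied must be projective in the current translation quiver---and these constraints depend on everything done previously. Showing that they survive every swap requires a detailed case analysis across all pairings of (ad~1) with (ad~1$^*$)--(ad~5$^*$), paying particular attention to configurations in which the target of the (ad~1) ends up on a coray that has itself been modified by an (ad~$j^*$). Part (ii) then follows by the formally dual argument; equivalently, one may apply (i) to the opposite algebra $A^{\op}$ and transport the conclusion via the standard duality $D$, under which (ad~$k$) and (ad~$k^*$) are interchanged and rays/corays, projectives/injectives are swapped throughout $_c\sC^{\coh}$.
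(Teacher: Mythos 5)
The paper does not provide a proof of this proposition: it is stated as a consequence of \cite[Sections 2 and 5]{MS1} and left at that, with the analogous algebra-level statement (Proposition \ref{prop-gme}) similarly deferred to \cite[Sections 3 and 4]{MS2}. So there is no paper proof to compare against directly; what can be assessed is whether your sketch could plausibly be filled in.

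Your plan has a reasonable overall shape (decompose, then commute), but it hinges on a claim you do not establish and that is far from obvious from the definitions in \cite[Section 2]{MS1}: that every operation (ad~$k$) with $k\in\{2,3,4,5\}$ factors as a single (ad~1) followed by a string of dual operations (ad~$1^*$)--(ad~$5^*$), applied to the translation quiver at hand. You assert this ``by direct inspection,'' but the operations (ad~3)--(ad~5) involve attaching rectangular configurations and gluing together distinct tubes or previously modified pieces, and it is not clear that these gluings are realized by applying dual operations to a freshly inserted ray. Unless the decomposition is checked operation by operation against the MS1 definitions, the argument has a hole at its foundation. The same applies to your implicit assertion that the reordered sequence produces the \emph{same} translation quiver $\Omega$, not merely an isomorphic-looking one: preserving $\Omega$ (and hence $\Gamma = {_c\Omega}$) across each swap needs to be verified, not assumed.

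The second ingredient, commuting an (ad~1) past an earlier (ad~$j^*$), you yourself flag as the main obstacle, and you do not resolve it. The genuine difficulty is exactly the case you identify as hard: the base vertex of the (ad~1) may be created by the (ad~$j^*$), and then you must exhibit a vertex already present before the (ad~$j^*$) at which an (ad~1) is admissible and whose effect commutes. Saying that ``one relocates the (ad~1)'' and ``checks'' is where the proof would actually have to be carried out, and the case analysis over the pairings of (ad~1) against (ad~$1^*$)--(ad~$5^*$) is precisely where the content of \cite{MS1} lives. Finally, for (ii), the duality argument is not literally by passing to $A^{\op}$ since the operations here are translation-quiver operations, not algebra operations; the dual of a translation quiver operation is obtained by reversing arrows and swapping $\tau$ with $\tau^{-1}$, which does interchange (ad~$k$) with (ad~$k^*$), so the idea works but should be stated in those terms. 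As it stands, your proposal is an outline of a possible strategy rather than a proof, and the two load-bearing steps (the decomposition lemma and the commutation with admissibility preserved) remain unverified.
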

We note that an~iterated application of operations of type (ad~1) (respectively, (ad~1$^*$)) to a~stable tube leads to a~ray tube (respectively, a~coray tube)
in the sense of D'Este and Ringel \cite{DER}.

In \cite{MS2} we introduced the concept of a~generalized multicoil enlargement of a~product $C$ of algebras with respect to a~finite family $\cT_1, \cT_2, \ldots, \cT_s$
of pairwise orthogonal generalized standard stable tubes of $\Gamma_C$. Then such a~generalized multicoil enlargement $A$ of $C$ is obtained from
$\cT_1, \cT_2, \ldots, \cT_s$ by iterated application of admissible algebra operations (ad~1), (ad~2), (ad~3), (ad~4), (ad~5) and their duals
(ad~1$^*$), (ad~2$^*$), (ad~3$^*$), (ad~4$^*$), (ad~5$^*$). Then $\Gamma_A$ admits a~generalized standard generalized multicoil $\Gamma$ obtained
from the stable tubes $\cT_1, \cT_2, \ldots, \cT_s$ by iterated application of admissible translation quiver operations corresponding
to the admissible algebra operations leading from $C$ to $A$.

The following proposition follows from \cite[Section 3]{MS2}.
\begin{prop}\label{prop-genst}%
Let $A$ be an algebra, $\sC$ an infinite almost cyclic component of $\Gamma_A$, $\Gamma$ a~component of $_c\sC^{\coh}$, and $B(\Gamma)$ and $B(\cT_{\Gamma})$ the associated
faithful algebras of $\Gamma$ and $\cT_{\Gamma}$. Then the following statements are equivalent:
\begin{enumerate}
\renewcommand{\labelenumi}{\rm(\roman{enumi})}
\item $\Gamma$ is generalized standard.
\item $\cT_{\Gamma}$ is generalized standard.
\item $\cT_{\Gamma}$ is a~finite faithful family of pairwise orthogonal generalized standard stable tubes in $\Gamma_{B(\cT_{\Gamma})}$, and $B(\Gamma)$ is
a~generalized multicoil enlargement of $B(\cT_{\Gamma})$ with respect to $\cT_{\Gamma}$.
\end{enumerate}
\end{prop}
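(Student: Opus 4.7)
The plan is to establish the three equivalences via the cycle (i)$\Rightarrow$(ii)$\Rightarrow$(iii)$\Rightarrow$(i). The common thread is the following elementary observation, which I shall invoke at every step: if $X, Y$ are $A$-modules annihilated by an ideal $I\subseteq A$, then $\Hom_A(X,Y)=\Hom_{A/I}(X,Y)$, the identification respects irreducible maps, and hence the infinite radicals coincide. In particular, generalized standardness of a family $\mathcal{F}$ of modules in $\mo A$ is equivalent to generalized standardness of $\mathcal{F}$ in $\mo B(\mathcal{F})$. The implication (i)$\Rightarrow$(ii) is then immediate by restriction: since $\cT_{\Gamma}\subseteq\Gamma$, the vanishing of $\rad_A^{\infty}$ on $\Gamma\times\Gamma$ restricts to $\cT_{\Gamma}\times\cT_{\Gamma}$, and the observation transfers this to $B(\cT_{\Gamma})$.

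For (ii)$\Rightarrow$(iii), I would first extract the pairwise orthogonality of the constituent tubes of $\cT_{\Gamma}$: a nonzero map between modules lying in distinct generalized standard components must belong to $\rad_{B(\cT_\Gamma)}^{\infty}$, contradicting the hypothesis. Faithfulness of $\cT_{\Gamma}$ in $B(\cT_{\Gamma})$ holds by definition of the faithful algebra. To identify $B(\Gamma)$ as the asserted generalized multicoil enlargement, I would proceed by induction on the number of admissible translation-quiver operations of Proposition \ref{prop-order-ad} producing $\Gamma$ from $\cT_{\Gamma}$. At each step, the newly inserted indecomposable projective (for operations (ad~1)--(ad~5)) or injective (for the duals) module in $\mo A$ determines uniquely an algebra admissible operation that extends the quotient built so far to the next one, with the pivot module configuration at that step encoding the combinatorial data. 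The cumulative effect exhibits $B(\Gamma)$ as obtained from $B(\cT_{\Gamma})$ by the admissible algebra operations mirroring the translation-quiver operations, as required. This bookkeeping, which identifies admissible algebra operations from their translation-quiver shadows, is the main obstacle of the proof and is precisely what is carried out in detail in \cite[Section 3]{MS2}.

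For (iii)$\Rightarrow$(i), I would invoke directly the results of \cite[Section 3]{MS2}: the generalized multicoil produced in $\Gamma_{B(\Gamma)}$ from the family $\cT_{\Gamma}$ of pairwise orthogonal generalized standard stable tubes of $\Gamma_{B(\cT_{\Gamma})}$ by the prescribed sequence of admissible translation-quiver operations is a generalized standard component of $\Gamma_{B(\Gamma)}$. Since every module of $\Gamma$ is annihilated by $\ann_A(\Gamma)$, the opening observation transfers this generalized standardness from $B(\Gamma)$ back to $A$, so $\rad_A^{\infty}(X,Y)=\rad_{B(\Gamma)}^{\infty}(X,Y)=0$ for all $X,Y\in\Gamma$, completing the cycle.
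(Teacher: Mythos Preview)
Your proposal is correct and aligns with the paper's own treatment: the paper does not give an independent argument but simply records that the proposition follows from \cite[Section 3]{MS2}, and your sketch makes explicit the cycle (i)$\Rightarrow$(ii)$\Rightarrow$(iii)$\Rightarrow$(i) while deferring the substantive steps (identifying the admissible algebra operations from their translation-quiver shadows, and the generalized standardness of the resulting multicoil) to that same reference. The added observations on transfer of $\rad^{\infty}$ along the quotient $A\to B(\Gamma)$ and the orthogonality of distinct tubes under (ii) are accurate and standard.
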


We have also the following consequence of \cite[Sections 3 and 4]{MS2}.
\begin{prop}\label{prop-gme}%
Let $B$ be a~generalized multicoil enlargement of an algebra $C$ (not necessarily indecomposable) with respect to a~faithful family $\cT$ of
pairwise orthogonal generalized standard stable tubes in $\Gamma_{C}$.
Then the following statements hold.
\begin{enumerate}
\renewcommand{\labelenumi}{\rm(\roman{enumi})}
\item $B$ can be obtained from $C$ by iterated application of algebra operations of type (ad~1) followed by algebra operations of types (ad~1$^*$)-(ad~5$^*$).
\item $B$ can be obtained from $C$ by iterated application of algebra operations of type (ad~1$^*$) followed by algebra operations of types (ad~1)-(ad~5).
\end{enumerate}
\end{prop}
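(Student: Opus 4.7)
The plan is to reduce the statement to its translation-quiver analogue Proposition~\ref{prop-order-ad} and then lift the resulting reordering back to the algebra level. By definition, $B$ arises from $C$ via a sequence of admissible algebra operations
\[ C = C_0 \longrightarrow C_1 \longrightarrow \cdots \longrightarrow C_r = B, \]
and each step induces a corresponding admissible translation-quiver operation on the tubes of $\cT$ (viewed inside the corresponding component of $\Gamma_{C_i}$). This produces the cyclic generalized multicoil $\Gamma$ that is the cyclic part of the component of $\Gamma_B$ coming from $\cT$, as guaranteed by the construction recalled before Proposition~\ref{prop-genst}. By Proposition~\ref{prop-order-ad}(i) the translation-quiver operations can be rearranged so that those of type (ad~1) are performed first and the remaining operations, which are of types (ad~1$^*$)--(ad~5$^*$), are performed afterwards.

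I would then lift this rearrangement to the algebra side. The crucial point is that each elementary swap involves an algebra operation of type (ad~$s^*$) with $1\leq s\leq 5$ immediately followed by an operation of type (ad~1): one has to check that (a) after performing the (ad~1) operation first the pivot module of the (ad~$s^*$) operation still exists in the intermediate algebra with the correct coray data, and (b) applying the (ad~$s^*$) operation to the swapped intermediate algebra yields (up to isomorphism) the same algebra as the original two-step composition. This can be verified using the fact that operations of type (ad~1) modify the algebra along a chosen ray of the current generalized multicoil (by gluing hereditary summands of type $\mathbb{A}_n$ producing new projective-injective modules), whereas (ad~1$^*$)--(ad~5$^*$) modify it along a chosen coray, so the respective pivot data only meet through a relabelling which is exactly the one predicted by the translation-quiver reordering of Proposition~\ref{prop-order-ad}. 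Together with the analogous (and simpler) swaps inside each block, an induction on $r$ yields (i).

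The main obstacle will be the case-by-case bookkeeping required to confirm, in each of the five cases $s=1,\dots,5$, that the algebra and module parameters attached to an (ad~$s^*$) operation commute with those attached to an (ad~1) operation. Once the five elementary swaps are verified the general statement follows by induction. Finally, (ii) is obtained from (i) by a symmetric argument: since (ad~$r^*$) is dual to (ad~$r$) under passing to the opposite algebra, applying (i) to the generalized multicoil enlargement $B^{\op}$ of $C^{\op}$ with respect to the dual family $D\cT$ of pairwise orthogonal generalized standard stable tubes in $\Gamma_{C^{\op}}$, and then dualizing the resulting decomposition, gives the required reordering for $B$.
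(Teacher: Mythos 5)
The paper does not actually prove this proposition: it is presented as a ``consequence of \cite[Sections 3 and 4]{MS2}'' and the reader is referred there. So there is no proof in the text against which to compare, and the right thing to do is to judge your outline on its own terms.

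Your reduction strategy is reasonable in spirit, and the duality argument you give for (ii)~is fine once~(i)~is granted: $B^{\op}$ is a generalized multicoil enlargement of $C^{\op}$ with respect to the dual family $D\cT$, and the correspondence (ad~$r$)$\leftrightarrow$(ad~$r^*$) under passage to opposite algebras is standard, so~(ii)~follows from~(i). However, for~(i)~there is a genuine gap in the lifting step. Proposition~\ref{prop-order-ad} is not merely a statement that certain translation-quiver operations can be \emph{permuted}: the normal form has no operations of types (ad~2)--(ad~5) at all, while the defining sequence for $B$ may use all ten types. Thus the passage from an arbitrary admissible sequence to the normal form is a \emph{rewriting}, in which operations of types (ad~2)--(ad~5) must be eliminated in favor of (ad~1) and (ad~1$^*$)--(ad~5$^*$). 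Your elementary swap deals only with an (ad~$s^*$) step immediately followed by an (ad~1) step; it does not address how an (ad~$s$) step with $2\le s\le 5$ appearing in the original chain is to be transformed, nor what happens when such a step follows or precedes an (ad~1). Bubble-sorting adjacent pairs of the single type you consider cannot produce the claimed normal form. Beyond the scope issue, the verification of even the one swap you describe -- existence of the pivot with the correct (co)ray data after the swap, and agreement of the resulting algebras -- is announced but never carried out, and this case-by-case commutation is the actual mathematical content of the proposition. As written, the proposal is an outline that relocates the difficulty rather than resolving it; a complete argument would either import the relevant rewriting lemmas from \cite{MS2} explicitly or redo them at the level of the algebra operations of \cite[Section~3]{MS2}.
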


We note that an iterated application of algebra operations of type (ad~1) (respectively, (ad~1$^*$)) to the family $\cT$ leads to a~tubular extension
(respectively, tubular coextension) of $C$ in the sense of Ringel \cite{Ri1, SS2}.

The following proposition is relevant and provides a~criterion for a~stable tube to be generalized standard.
\begin{prop}\label{prop-brick}%
Let $A$ be an algebra and $\sT$ be a~stable tube of $\Gamma_A$. The following statements are equivalent:
\begin{enumerate}
\renewcommand{\labelenumi}{\rm(\roman{enumi})}
\item $\sT$ is generalized standard.
\item The mouth modules of $\sT$ are pairwise orthogonal bricks.
\end{enumerate}
\end{prop}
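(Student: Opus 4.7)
I would prove the two implications separately; the forward direction (i)$\Rightarrow$(ii) is a direct mesh-theoretic argument, while (ii)$\Rightarrow$(i) is the main direction, proceeding by induction on quasi-length.

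Label the mouth modules $E_1,\ldots,E_r$ so that $\tau E_i = E_{i-1}$ (indices modulo $r$). Each $E_i$ admits a unique irreducible outgoing morphism, the inclusion $\iota_i\colon E_i\hookrightarrow E_i[2]$ appearing in the AR sequence $0\to E_i\to E_i[2]\to E_{i+1}\to 0$, and a unique irreducible incoming morphism $\pi_{i-1}\colon E_{i-1}[2]\to E_i$. For (i)$\Rightarrow$(ii), if $\phi\in\rad_A(E_i,E_j)$ then by generalized standardness $\phi$ is a finite sum of compositions of irreducible morphisms along paths in $\sT$ from $E_i$ to $E_j$, each starting with $\iota_i$ and ending with $\pi_{j-1}$. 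The basic cancellation is $\pi_i\iota_i=0$, coming directly from the AR short exact sequence. I would then argue by induction on path length that every longer composition vanishes: using the mesh relations at each intermediate vertex (which hold in $\mo A$ because they come from AR sequences) one rewrites a composition along a given path as a combination of compositions along shorter paths, eventually reducing to the basic zero composition $\pi_k\iota_k$. Hence $\rad_A(E_i,E_j)=0$, so the $E_i$ are pairwise orthogonal bricks.

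For (ii)$\Rightarrow$(i), assume $\Hom_A(E_i,E_j)=\delta_{ij}D_i$ with each $D_i$ a division ring. I would show $\rad^\infty_A(X,Y)=0$ for all indecomposable $X,Y\in\sT$ by induction on $\ell(X)+\ell(Y)$, where $\ell$ denotes quasi-length. The base case $\ell(X)=\ell(Y)=1$ reduces to mouth pairs and is immediate from the hypothesis. For the inductive step with $Y=E_j[m]$, $m\geq 2$, use the sectional short exact sequence $0\to E_j\to Y\to E_{j+1}[m-1]\to 0$ sitting along a coray and apply $\Hom_A(X,-)$. Any $\phi\in\rad^\infty_A(X,Y)$ pushes forward via the surjection $Y\to E_{j+1}[m-1]$ to a morphism in $\rad^\infty_A(X,E_{j+1}[m-1])$, which vanishes by the inductive hypothesis. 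Hence $\phi$ factors through the inclusion $E_j\hookrightarrow Y$ as $\phi=\iota\phi'$ for some $\phi'\in\Hom_A(X,E_j)$; controlling the irreducible inclusion $\iota$ then forces $\phi'\in\rad^\infty_A(X,E_j)=0$, again by induction.

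The main obstacle is the last step of (ii)$\Rightarrow$(i): showing that $\iota\phi'\in\rad^\infty_A(X,Y)$ implies $\phi'\in\rad^\infty_A(X,E_j)$. This requires that composition with the irreducible inclusion $\iota$ strictly raise the radical degree, which fails in general but should hold in a stable tube with bricky mouth, because pairwise orthogonality of the mouth bricks rules out the cancellations that would allow $\phi'\notin\rad^\infty$ to produce $\iota\phi'\in\rad^\infty$. Formalizing this step amounts to a careful bookkeeping of how compositions of irreducibles in $\sT$ factor through mouth modules, exploiting the regular-serial structure of the tube whereby each indecomposable $E_i[n]$ has a distinguished mouth-composition series $E_i\subset E_i[2]\subset\cdots\subset E_i[n]$ with factors $E_i,E_{i+1},\ldots,E_{i+n-1}$, and reducing all computations in $\rad^\infty_A$ directly to the hypothesis of brick orthogonality at the mouth.
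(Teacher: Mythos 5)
Your overall strategy matches the paper's — mesh relations for (i)$\Rightarrow$(ii), propagation of $\rad^\infty$ down to the mouth for (ii)$\Rightarrow$(i) — but both directions as written contain genuine gaps.

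In (i)$\Rightarrow$(ii), the claim that one "rewrites a composition along a given path as a combination of compositions along shorter paths" is not what the mesh relations give you: they preserve path length. More seriously, the mesh relations need not hold exactly in $\mo A$ when $\sT$ is merely generalized standard (as opposed to standard): for chosen irreducible representatives $f_\alpha$ one can only arrange $f_\beta f_\alpha + f_\delta f_\gamma \in \rad_A^3$ at an ordinary mesh and $\pi_k\iota_k \in \rad_A^3$ at a mouth mesh, not equality. The paper's argument is a radical-power computation: if $2s$ is the minimal length of a path from a mouth module $M$ to a mouth module $N$, then $\rad_A(M,N)=\rad_A^{2s}(M,N)$ by generalized standardness; and for any $t\geq 2s$, every composition of $t$ irreducible maps starting at $M$ must begin with the two maps through the unique mesh $M\to E\to\tau_A^{-1}M$, whose composite lies in $\rad_A^3$, so $\rad_A^t(M,N)\subseteq\rad_A^{t+1}(M,N)$. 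Hence $\rad_A^{2s}(M,N)=\rad_A^\infty(M,N)=0$. Your version silently assumes exact mesh relations, which is precisely what is not available here.

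In (ii)$\Rightarrow$(i), the step you flag is the real content and cannot be dispatched by "careful bookkeeping" alone: it is Liu's Corollary 1.6 on degrees of irreducible morphisms, which is the tool the paper invokes. Note also that for $m>2$ your map $\iota\colon E_j\hookrightarrow E_j[m]$ is not irreducible but a composite of $m-1$ irreducible ray maps, so the degree argument must be iterated $m-1$ times. The paper avoids this by arguing contrapositively one step at a time: assuming $\rad_A^\infty(X,Y)\neq 0$, it uses the almost split sequence $0\to U\to Y\oplus W\to V\to 0$ starting at the coray predecessor $U$ of $Y$; either $uh\neq 0$ gives a nonzero $\rad_A^\infty$-morphism into $V$ with $\ql(V)=\ql(Y)-1$, or $h$ factors through $U$ and one application of Liu's degree result shows the factoring morphism stays in $\rad_A^\infty$. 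Your single jump to the mouth via the coray short exact sequence is not wrong in principle, but it needs exactly the same degree machinery, applied repeatedly, so you gain nothing over the step-by-step reduction and you lose the clean one-application-per-step structure. You should either cite Liu's result explicitly and iterate it, or switch to the paper's local reduction via the AR sequence.
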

\begin{proof}
(i) $\Longrightarrow$ (ii). Assume $\sT$ is generalized standard, and $M, N$ be two modules lying on the mouth of $\sT$.
We claim that $\rad_A(M,N)=0$.
For each arrow $X \buildrel {\alpha}\over {\hbox to 6mm{\rightarrowfill}} Y$ in $\sT$ we choose an irreducible homomorphism $f_{\alpha}: X\to Y$.
We may assume that $f_{\eta}f_{\xi}\in\rad^3_A$ for any mesh in $\sT$ of the form
$$\xymatrix@C=14pt@R=14pt{
\tau_A Z\ar[rd]_{\xi}&&Z \\
&W\ar[ru]_{\eta} \\
}$$
with $Z$ lying on the mouth of $\sT$ and $f_{\beta}f_{\alpha}+f_{\delta}f_{\gamma}\in\rad^3_A$ for any mesh in $\sT$ of the form
$$\xymatrix@C=14pt@R=14pt{
&U\ar[rd]^{\beta} \\
\tau_A Z\ar[rd]_{\gamma}\ar[ru]^{\alpha}&&Z \\
&V\ar[ru]_{\delta} \\
}$$
Observe that for any irreducible homomorphism $f: X\to Y$ with $X$ and $Y$ from $\sT$, there are automorphisms $b: X\to X$ and $c: Y\to Y$ such that
$$
f_{\alpha}b + \rad^2_A(X,Y)=f + \rad^2_A(X,Y) = cf_{\alpha} + \rad^2_A(X,Y),
$$
where $X \buildrel {\alpha}\over {\hbox to 6mm{\rightarrowfill}} Y$ is the corresponding arrow in $\sT$. This follows from the fact that
$$
\dim_{F(X)}\rad_A(X,Y)/\rad^2_A(X,Y)=1 \,\,{\rm and}\,\, \dim_{F(Y)}\rad_A(X,Y)/\rad^2_A(X,Y)=1,
$$
where $F(X)=\End_A(X)/\rad(\End_A(X))$ and $F(Y)=\End_A(Y)/\rad(\End_A(Y))$.
Let $r$ be the rank of $\sT$. Then $\rad_A(M,N) = \rad_A^{2s}(M,N)$, where $s$ is the smallest positive integer such that $M=\tau_A^sN$.
We note that $s\in\{1, \ldots, r\}$, and $s=r$ if and only if $N=M$. Moreover, any nontrivial path in $\sT$ from $M$ to $N$ is of length $2s+2ri$
for some $i\geq 0$.
This implies that $\rad_A^{2s+2rj+1}(M,N) = \rad_A^{2s+2r(j+1)}(M,N)$ for all $j\geq 0$. We claim that $\rad_A^{t}(M,N)=0$ for all $t\geq 2s$.
It is enough to show that $\rad_A^{t}(M,N)\subseteq\rad_A^{t+1}(M,N)$ for any $t\geq 2s$. Indeed, then $\rad_A^{2s}(M,N)=\rad_A^{\infty}(M,N)=0$,
because $\sT$ is generalized standard, and consequently $\rad_A(M,N)=0$. Consider the mesh
$$\xymatrix@C=14pt@R=14pt{
M\ar[rd]_{\rho}&&\tau_A^{-1}M \\
&E\ar[ru]_{\sigma} \\
}$$
Let $t\geq 2s$ and $\varphi\in\rad_A^{t}(M,N)$. Then we have the equality
\[ \varphi + \rad_A^{t+1}(M,N) = \sum_iv_iu_if_{\sigma}f_{\rho} + \rad_A^{t+1}(M,N), \]
where $u_if_{\sigma}f_{\rho}$ are composites of $t$ irreducible homomorphisms.
Since $f_{\sigma}f_{\rho}\in\rad_A^3$, we get $\varphi + \rad_A^{t+1}(M,N) = 0 + \rad_A^{t+1}(M,N)$, and hence $\varphi\in\rad_A^{t+1}(M,N)$.
This proves our claim. Therefore, the mouth modules of $\sT$ are pairwise orthogonal bricks.

(ii) $\Longrightarrow$ (i). Assume $\sT$ is not generalized standard.
Then there are indecomposable modules $X$ and $Y$ in $\sT$ with $\rad_A^{\infty}(X,Y)\neq 0$. We will prove that then $\rad_A^{\infty}(M,N)\neq 0$
for some modules $M$ and $N$ lying on the mouth of $\sT$. Clearly, there is nothing to show if $\ql(X)=1$ and $\ql(Y)=1$.
Assume that $\ql(Y)\geq 2$.
Then we have in $\sT$ an infinite sectional path
\[ \cdots\to Y_r\to Y_{r-1}\to\cdots\to Y_1\to Y_0 = Y \]
and an arrow $U\to Y$ such that $U\oplus Y_1$ is a~direct summand of the middle term of an almost split sequence in $\mo A$ with the right term
$Y$. Moreover, we have in $\mo A$ an almost split sequence
\[ 0\to U \buildrel {\left[\begin{smallmatrix}f\\g\end{smallmatrix}\right]}\over {\hbox to 10mm{\rightarrowfill}} Y\oplus W\buildrel {[u,v]} \over {\hbox to 10mm{\rightarrowfill}}
V\to 0, \]
where $W=0$ if $\ql(Y)=2$. Take now a~nonzero homomorphism $h$ in $\rad_A^{\infty}(X,Y)$. If $uh\neq 0$, then $uh$ is
a~nonzero homomorphism in $\rad_A^{\infty}(X,V)$ and $\ql(X)+\ql(V)=\ql(X)+\ql(Y)-1$. Assume $uh=0$. Then there is a~homomorphism
$h': X\to U$ such that $h=fh'$. Clearly, $h'\neq 0$. We claim that $h'\in\rad_A^{\infty}(X,U)$. Suppose $h'\not\in\rad_A^{\infty}(X,U)$.
Then there is a~nonnegative integer $s$ such that  $h'\in\rad_A^{s}(X,U)\setminus\rad_A^{s+1}(X,U)$. Then, applying \cite[Corollary 1.6]{L1},
we conclude that $h=fh'\in\rad_A^{s+1}(X,Y)\setminus\rad_A^{s+2}(X,Y)$, a~contradiction with $h\in\rad_A^{\infty}(X,Y)$.
Therefore, $h'$ is a~nonzero homomorphism in $\rad_A^{\infty}(X,U)$ and $\ql(X)+\ql(U)=\ql(X)+\ql(Y)-1$. If $\ql(X)\geq 2$, then applying
dual arguments, we prove that $\rad_A^{\infty}(Z,Y)\neq 0$ for some indecomposable module $Z$ in $\sT$ with $\ql(Z)=\ql(X)-1$.
Summing up, we conclude (by decreasing induction on $\ql(X)+\ql(Y)$) that $\rad_A^{\infty}(M,N)\neq 0$
for some mouth modules $M$ and $N$ in $\sT$, and hence $\rad_A(M,N)\neq 0$. Therefore, (ii) implies (i).
\end{proof}

We note that if $A$ is an~algebra and $\sT$ a~faithful generalized standard stable tube of $\Gamma_A$, then $\pd_AX\leq 1$ and $\id_AX\leq 1$
for any module $X$ in $\sC$ (see \cite[Lemma 5.9]{S2}). But such an algebra $A$ may have an~arbitrary (finite or infinite) global dimension (see \cite{S6}).
We refer also to \cite{S5} for results on the composition factors of modules lying in generalized standard stable tubes.

Let $A$ be an algebra and $\sC$ be an almost periodic component of $\Gamma_A$. Recall that an indecomposable module $X$ in $\sC$ is called \emph{left stable}
(respectively, \emph{right stable}) if $\tau_A^nX$ is nonzero for all $n\geq 0$ (respectively, $n\leq 0$), and \emph{stable} if it is left stable and right stable.
Following \cite{L2}, we denote by $_l\sC$ the \emph{left stable part} of $\sC$, obtained by removing the $\tau_A$-orbits containing projective modules, and
by $_r\sC$ the \emph{right stable part} of $\sC$, obtained by removing the $\tau_A$-orbits containing injective modules. Moreover, we denote by $_l\sC^{\ac}$
(respectively, $_r\sC^{\ac}$) the union of components in $_l\sC$ (respectively, $_r\sC$) consisting entirely of acyclic modules.
We call $_l\sC^{\ac}$ the \emph{left stable acyclic part} of $\sC$ and $_r\sC^{\ac}$ the \emph{right stable acyclic part} of $\sC$. We note that all components
in $_l\sC^{\ac}$ and $_r\sC^{\ac}$ have only finitely many $\tau_A$-orbits, because $\sC$ is assumed to be almost periodic.
Let $\Gamma$ be a~component of $_l\sC^{\ac}$. Then it follows from \cite[Section 3]{L2} that there exists a~finite connected acyclic full valued subquiver
$\Delta_{\Gamma}^{(l)}$ of $\Gamma$ which intersects every $\tau_A$-orbit in $\Gamma$ exactly once, and $\sD_{\Gamma}^{(l)}={\Bbb N}\Delta_{\Gamma}^{(l)}$
is a~full translation subquiver of $\Gamma$ closed under predecessors. We denote by $M_{\Gamma}^{(l)}$ the direct sum of all indecomposable modules lying
on $\Delta_{\Gamma}^{(l)}$.
Dually, let $\Gamma$ be a~component of $_r\sC^{\ac}$. Then, applying \cite[Section 3]{L2}, we conclude that there exists a~finite connected acyclic full valued subquiver
$\Delta_{\Gamma}^{(r)}$ of $\Gamma$ which intersects every $\tau_A$-orbit in $\Gamma$ exactly once, and $\sD_{\Gamma}^{(r)}=({-\Bbb N})\Delta_{\Gamma}^{(r)}$
is a~full translation subquiver of $\Gamma$ closed under successors. We denote by $M_{\Gamma}^{(r)}$ the direct sum of all indecomposable modules lying
on $\Delta_{\Gamma}^{(r)}$.
In case $\Gamma$ is a~stable component, that is a~component of $_l\sC^{\ac}$ and a~component of $_r\sC^{\ac}$, we choose
$\Delta_{\Gamma}^{(l)}$ and $\Delta_{\Gamma}^{(r)}$ such that $\sD_{\Gamma}^{(l)}$ and $\sD_{\Gamma}^{(r)}$ have no common modules.
\begin{prop}\label{prop-D-left}%
Let $A$ be an algebra, $\sC$ an almost periodic component of $\Gamma_A$ with $_l\sC^{\ac}$ nonempty, and $\Gamma$ a~component of $_l\sC^{\ac}$.
Then the following conditions are equivalent:
\begin{enumerate}
\renewcommand{\labelenumi}{\rm(\roman{enumi})}
\item $\sD_{\Gamma}^{(l)}$ is generalized standard.
\item $\Hom_A(M_{\Gamma}^{(l)},\tau_AM_{\Gamma}^{(l)})=0$.
\item $A_{\Gamma}^{(l)}=B(M_{\Gamma}^{(l)})$ is a~tilted algebra of the form $\End_{H_{\Gamma}^{(l)}}(T_{\Gamma}^{(l)})$, for a~hereditary algebra $H_{\Gamma}^{(l)}$
of type $\Delta_{\Gamma}^{(l)}$ and a~tilting module $T_{\Gamma}^{(l)}$ in $\mo H_{\Gamma}^{(l)}$ without indecomposable preinjective direct summands, and
$\sD_{\Gamma}^{(l)}$ is the image of the preinjective component $\cQ_{\Gamma}^{(l)}$ of $\Gamma_{H_{\Gamma}^{(l)}}$ via the functor
$\Hom_{H_{\Gamma}^{(l)}}(T_{\Gamma}^{(l)},-): \mo H_{\Gamma}^{(l)} \to \mo A_{\Gamma}^{(l)}$.
\end{enumerate}
\end{prop}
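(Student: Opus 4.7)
The strategy is to prove the cycle $\mathrm{(i)}\Rightarrow\mathrm{(ii)}\Rightarrow\mathrm{(iii)}\Rightarrow\mathrm{(i)}$.

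For $\mathrm{(i)}\Rightarrow\mathrm{(ii)}$, first observe that since $\sD_{\Gamma}^{(l)}=\mathbb{N}\Delta_{\Gamma}^{(l)}$ is closed under predecessors in $\Gamma$ and $\tau_AM_{\Gamma}^{(l)}$ lies one $\tau_A$-step behind $M_{\Gamma}^{(l)}$, both $M_{\Gamma}^{(l)}$ and $\tau_AM_{\Gamma}^{(l)}$ are modules of $\sD_{\Gamma}^{(l)}$. Because $\Delta_{\Gamma}^{(l)}$ meets each $\tau_A$-orbit of $\sD_{\Gamma}^{(l)}$ exactly once, there is no path in $\sD_{\Gamma}^{(l)}$ from any summand of $M_{\Gamma}^{(l)}$ to any summand of $\tau_AM_{\Gamma}^{(l)}$, and these summands are pairwise non-isomorphic. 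Invoking the principle (recalled in Section \ref{intro}) that every nonzero nonisomorphism between modules of a generalized standard component is a finite sum of compositions of irreducible homomorphisms in the component, one concludes $\Hom_A(M_{\Gamma}^{(l)},\tau_AM_{\Gamma}^{(l)})=0$.

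For $\mathrm{(iii)}\Rightarrow\mathrm{(i)}$, I would use that the preinjective component $\cQ_{\Gamma}^{(l)}$ of any hereditary artin algebra $H_{\Gamma}^{(l)}$ is generalized standard, and that the Brenner--Butler tilting functor $\Hom_{H_{\Gamma}^{(l)}}(T_{\Gamma}^{(l)},-)$ restricts to a full and faithful equivalence between the torsion-free class of $\mo H_{\Gamma}^{(l)}$ containing $\cQ_{\Gamma}^{(l)}$ and its image in $\mo A_{\Gamma}^{(l)}$. This transports the vanishing of the infinite radical from $\cQ_{\Gamma}^{(l)}$ to $\sD_{\Gamma}^{(l)}$, so the latter is generalized standard.

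The heart of the argument is $\mathrm{(ii)}\Rightarrow\mathrm{(iii)}$. Here the plan is to apply the tilted algebra criterion of Liu and Skowro\'nski from \cite{L3, S1}: a faithful component that admits a section $\Delta$ whose section module $M$ satisfies $\Hom(M,\tau M)=0$ is the connecting component of a tilted algebra $\End_H(T)$ with $H$ hereditary of type $\Delta$. By construction $\Delta_{\Gamma}^{(l)}$ is a section of $\sD_{\Gamma}^{(l)}$, and passing to $A_{\Gamma}^{(l)}=B(M_{\Gamma}^{(l)})$ makes $M_{\Gamma}^{(l)}$ faithful by definition, so the criterion produces the desired $H_{\Gamma}^{(l)}$ and $T_{\Gamma}^{(l)}$. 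Since $\Gamma$ is a component of $_l\sC^{\ac}$, no $\tau_A$-orbit through $\Delta_{\Gamma}^{(l)}$ contains an injective or projective module; on the hereditary side this corresponds, under the tilting equivalence, to $T_{\Gamma}^{(l)}$ having no preinjective direct summands. Closure of $\sD_{\Gamma}^{(l)}$ under predecessors then identifies $\sD_{\Gamma}^{(l)}$ with the image of $\cQ_{\Gamma}^{(l)}$ under $\Hom_{H_{\Gamma}^{(l)}}(T_{\Gamma}^{(l)},-)$, yielding (iii).

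The principal obstacle is the bookkeeping in $\mathrm{(ii)}\Rightarrow\mathrm{(iii)}$: one must verify that $\Delta_{\Gamma}^{(l)}$ plays the role of a section in the precise technical sense of \cite{L3, S1}, that the faithful algebra appearing in that criterion coincides with $A_{\Gamma}^{(l)}=B(M_{\Gamma}^{(l)})$, and that the structural condition ``$\Gamma$ lies in the left stable acyclic part'' translates cleanly to the non-existence of preinjective summands of $T_{\Gamma}^{(l)}$.
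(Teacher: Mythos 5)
The proposal reverses the paper's cycle: you prove $\mathrm{(i)}\Rightarrow\mathrm{(ii)}\Rightarrow\mathrm{(iii)}\Rightarrow\mathrm{(i)}$, whereas the paper proves $\mathrm{(i)}\Rightarrow\mathrm{(iii)}\Rightarrow\mathrm{(ii)}\Rightarrow\mathrm{(i)}$. Your $\mathrm{(i)}\Rightarrow\mathrm{(ii)}$ is correct and pleasantly direct: since $\sD_{\Gamma}^{(l)}$ is closed under predecessors, any composition of irreducible maps from $X$ to $Y$ with both in $\sD_{\Gamma}^{(l)}$ stays in $\sD_{\Gamma}^{(l)}$, and there is no path there from level $0$ to level $1$; the paper never uses this shortcut because its cycle goes the other way.

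The serious gap is in $\mathrm{(ii)}\Rightarrow\mathrm{(iii)}$. The Liu--Skowro\'nski criterion (\cite[Theorem VIII.5.6]{ASS}, \cite[Theorem VIII.7.7]{SY4}) requires $\Delta_{\Gamma}^{(l)}$ to be a faithful section of a \emph{component} $\sE_{\Gamma}^{(l)}$ of $\Gamma_{A_{\Gamma}^{(l)}}$, not merely of the translation subquiver $\sD_{\Gamma}^{(l)}$. A priori $\sE_{\Gamma}^{(l)}$ could contain indecomposable projective $A_{\Gamma}^{(l)}$-modules lying beyond $\Delta_{\Gamma}^{(l)}$, in which case $\Delta_{\Gamma}^{(l)}$ would fail to be a section and the criterion would not apply. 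You dismiss this as bookkeeping, but it is precisely the crux: the paper rules out such projectives by deriving a nonzero map in $\rad_A^{\infty}(X,Y)$ between modules $X,Y$ on $\Delta_{\Gamma}^{(l)}$ and contradicting the \emph{full} generalized standardness of $\sD_{\Gamma}^{(l)}$, i.e. condition $\mathrm{(i)}$ --- not condition $\mathrm{(ii)}$. With only $\Hom_A(M_{\Gamma}^{(l)},\tau_A M_{\Gamma}^{(l)})=0$ in hand, you do not yet know $\rad_A^{\infty}(X,Y)=0$ for $X,Y$ on $\Delta_{\Gamma}^{(l)}$; converting between the two is exactly the content of the paper's nontrivial implication $\mathrm{(ii)}\Rightarrow\mathrm{(i)}$, which it establishes via the infinite-path lemmas of \cite[Lemma 2.1]{S7} together with an injective-envelope argument pushing an infinite-radical map into $\Hom_A(M_{\Gamma}^{(l)},\tau_A M_{\Gamma}^{(l)})$. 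Your cycle cannot borrow this because it bypasses $\mathrm{(ii)}\Rightarrow\mathrm{(i)}$.

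Two smaller remarks on $\mathrm{(iii)}\Rightarrow\mathrm{(i)}$. First, the preinjective component of $\Gamma_{H_{\Gamma}^{(l)}}$ lies in the torsion class $\mathcal{T}(T_{\Gamma}^{(l)})$, not the torsion-free class; the functor $\Hom_{H_{\Gamma}^{(l)}}(T_{\Gamma}^{(l)},-)$ is an equivalence $\mathcal{T}(T_{\Gamma}^{(l)})\to\sY(T_{\Gamma}^{(l)})$. Second, saying that full faithfulness ``transports the vanishing of the infinite radical'' is not automatic: $\rad^{\infty}_{A_{\Gamma}^{(l)}}$ is defined using all of $\mo A_{\Gamma}^{(l)}$, so one must also argue that $\sD_{\Gamma}^{(l)}$ is closed under predecessors in $\Gamma_{A_{\Gamma}^{(l)}}$ and that the tilting functor carries almost split sequences inside the preinjective component to almost split sequences in $\sD_{\Gamma}^{(l)}$. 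The paper sidesteps this by proving only $\mathrm{(iii)}\Rightarrow\mathrm{(ii)}$, which needs nothing beyond $\add(\cQ_{\Gamma}^{(l)})\simeq\add(\sD_{\Gamma}^{(l)})$ and the vanishing $\Hom_{H_{\Gamma}^{(l)}}(M,\tau M)=0$ over the hereditary algebra.
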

\begin{proof}
(i)$\Longrightarrow$(iii). Assume that $\sD_{\Gamma}^{(l)}$ is generalized standard. Observe that the faithful algebra $A_{\Gamma}^{(l)}$ of $M_{\Gamma}^{(l)}$
is the faithful algebra $B(\sD_{\Gamma}^{(l)})$ of $\sD_{\Gamma}^{(l)}$. There exists a~module $N$ in the additive category $\add(\sD_{\Gamma}^{(l)})$
such that $\ann_A(\sD_{\Gamma}^{(l)})=\ann_A(N)$. Clearly, we have $\ann_A(\sD_{\Gamma}^{(l)})\subseteq \ann_A(M_{\Gamma}^{(l)})$.
Consider a~monomorphism $u: N\to E_A(N)$ from $N$ to its injective envelope $E_A(N)$ in $\mo A$. Since $\Delta_{\Gamma}^{(l)}$ is connected and intersects
every $\tau_A$-orbit in $\sD_{\Gamma}^{(l)}$, we conclude that $\sD_{\Gamma}^{(l)}\setminus \Delta_{\Gamma}^{(l)}$ does not contain an injective module.
Then $u$ factors through a~module $M$ from $\add(M_{\Gamma}^{(l)})$, and hence there is a~monomorphism $v: N\to M$.
But then $\ann_A(M_{\Gamma}^{(l)})\subseteq\ann_A(M)\subseteq\ann_A(N)=\ann_A(\sD_{\Gamma}^{(l)})$.
Therefore, indeed $A_{\Gamma}^{(l)} = B(\sD_{\Gamma}^{(l)})$. Moreover, because $A_{\Gamma}^{(l)}$ is a~quotient algebra of $A$, $\sD_{\Gamma}^{(l)}$
is a~full translation subquiver of the Auslander-Reiten quiver $\Gamma_{A_{\Gamma}^{(l)}}$ of $A_{\Gamma}^{(l)}$.
Let $\sE_{\Gamma}^{(l)}$ be the component of $\Gamma_{A_{\Gamma}^{(l)}}$ containing $\sD_{\Gamma}^{(l)}$. We claim that $\Delta_{\Gamma}^{(l)}$ is a~section
of $\sE_{\Gamma}^{(l)}$. It is enough to show that $\sE_{\Gamma}^{(l)}$ does not contain an indecomposable projective $A_{\Gamma}^{(l)}$-module.
Suppose that $\sE_{\Gamma}^{(l)}$ contains a~projective module. We may choose minimal $m\in{\Bbb N}$ such that $\tau_{A_{\Gamma}^{(l)}}^{-m}\Delta_{\Gamma}^{(l)}$
contains an indecomposable direct summand, say $R$, of the radical of an indecomposable projective module $P$ in $\sE_{\Gamma}^{(l)}$.
Then considering the projective cover of $R$ in $\mo A_{\Gamma}^{(l)}$ we conclude that there is an epimorphism from a~module in $\add(M_{\Gamma}^{(l)})$ to $R$,
and hence a~nonzero homomorphism $f: X\to R$ with $X$ a~module lying on $\Delta_{\Gamma}^{(l)}$.
Further, since $M_{\Gamma}^{(l)}$ is a~faithful $\Delta_{\Gamma}^{(l)}$-module, there is a~monomorphism from $P$ to a~module $N$ in $\add(M_{\Gamma}^{(l)})$,
and hence $gf\neq 0$ for a~homomorphism $g: R\to Y$ with $Y$ a~module lying on $\Delta_{\Gamma}^{(l)}$. This implies that $gf$ is a~nonzero homomorphism in
$\rad_{A_{\Gamma}^{(l)}}^{\infty}(X,Y)$, because there is no path in $\sE_{\Gamma}^{(l)}$ from $X$ to $Y$ passing through $R$.
Clearly, $\rad_{A_{\Gamma}^{(l)}}^{\infty}(X,Y)\neq 0$ implies $\rad_{A}^{\infty}(X,Y)\neq 0$, a~contradiction because $\sD_{\Gamma}^{(l)}$ is generalized standard.
Therefore, $\Delta_{\Gamma}^{(l)}$ is a~faithful section of the component of $\sE_{\Gamma}^{(l)}$, and
$\Hom_{A_{\Gamma}^{(l)}}(M_{\Gamma}^{(l)},\tau_{A_{\Gamma}^{(l)}}M_{\Gamma}^{(l)})=\Hom_{A}(M_{\Gamma}^{(l)},\tau_{A}M_{\Gamma}^{(l)})=0$.

Applying now the criterion of Liu and Skowro\'nski (see \cite[Theorem VIII.5.6]{ASS} or \cite[Theorem VIII.7.7]{SY4}) we conclude that
$H_{\Gamma}^{(l)} = \End_{A_{\Gamma}^{(l)}}(M_{\Gamma}^{(l)}) = \End_{A}(M_{\Gamma}^{(l)})$ is a~hereditary algebra,
$T_{\Gamma}^{(l)}=D(M_{\Gamma}^{(l)})$ is a~tilting module in $\mo H_{\Gamma}^{(l)}$, there is a~canonical isomorphism of algebras
$A_{\Gamma}^{(l)}\to\End_{H_{\Gamma}^{(l)}}(T_{\Gamma}^{(l)})$, and $\sE_{\Gamma}^{(l)}$ is the connecting component of $\Gamma_{A_{\Gamma}^{(l)}}$ with
$\Delta_{\Gamma}^{(l)}$ the section determined by $T_{\Gamma}^{(l)}$. Moreover, $T_{\Gamma}^{(l)}$ is without an indecomposable preinjective
$H_{\Gamma}^{(l)}$-module, because $\sD_{\Gamma}^{(l)}$ does not contain a~projective module (see \cite[Proposition VIII.6.9]{SY4}).
Finally, the translation quiver $\sD_{\Gamma}^{(l)}$ is the image of the preinjective component $\cQ_{\Gamma}^{(l)}$ of $\Gamma_{H_{\Gamma}^{(l)}}$ via the functor
$\Hom_{H_{\Gamma}^{(l)}}(T_{\Gamma}^{(l)},-): \mo H_{\Gamma}^{(l)} \to \mo A_{\Gamma}^{(l)}$ (see \cite[Theorem VIII.6.7]{SY4}).

(iii)$\Longrightarrow$(ii). Assume that (iii) holds. Then the functor
$\Hom_{H_{\Gamma}^{(l)}}(T_{\Gamma}^{(l)},-): \mo H_{\Gamma}^{(l)} \to \mo A_{\Gamma}^{(l)}$ induces an equivalence of categories
$\add(\cQ_{\Gamma}^{(l)})\to \add(\sD_{\Gamma}^{(l)})$, with $\cQ_{\Gamma}^{(l)}$ the preinjective component of $\Gamma_{H_{\Gamma}^{(l)}}$.
Hence $\Hom_{A_{\Gamma}^{(l)}}(M_{\Gamma}^{(l)},\tau_{A_{\Gamma}^{(l)}}M_{\Gamma}^{(l)})=0$ follows.

(ii)$\Longrightarrow$(i). Assume that (ii) holds.
Suppose that there exist indecomposable modules $X$ and $Y$ in $\sD_{\Gamma}^{(l)}$ such that $\rad_A^{\infty}(X,Y)\neq 0$. Then it follows from
\cite[Lemma 2.1(i)]{S7} that there exist an infinite path
\[  X = X_0 \buildrel {f_1}\over {\hbox to 7mm{\rightarrowfill}} X_1 \buildrel {f_2}\over {\hbox to 7mm{\rightarrowfill}} \cdots \buildrel {f_{r-1}}\over {\hbox to 7mm{\rightarrowfill}}
X_{r-1} \buildrel {f_r}\over {\hbox to 7mm{\rightarrowfill}} X_r \buildrel {f_{r+1}}\over {\hbox to 7mm{\rightarrowfill}} \cdots \]
of irreducible homomorphisms between indecomposable modules in $\mo A$ and $g_r\in\rad_A^{\infty}(X_r,Y)$, $r\geq 1$, such that $g_rf_r\ldots f_1\neq 0$
for any $r\geq 1$. Since $\sD_{\Gamma}^{(l)}$ is an acyclic quiver of the form ${\Bbb N}\Delta_{\Gamma}^{(l)}$ and $X$ belongs to $\sD_{\Gamma}^{(l)}$,
there is $s\geq 1$ such that $X_s$ belongs to $\Delta_{\Gamma}^{(l)}$.
Because $\rad_A^{\infty}(X_s,Y)\neq 0$, applying \cite[Lemma 2.1(ii)]{S7}, we conclude that there exist an infinite path
\[  \cdots \buildrel {h_{t+1}}\over {\hbox to 7mm{\rightarrowfill}} Y_t \buildrel {h_t}\over {\hbox to 7mm{\rightarrowfill}} Y_{t-1} \buildrel {h_{t-1}}\over {\hbox to 7mm{\rightarrowfill}}
\cdots \buildrel {h_{2}}\over {\hbox to 7mm{\rightarrowfill}} Y_{1} \buildrel {h_1}\over {\hbox to 7mm{\rightarrowfill}} Y_0 = Y \]
of irreducible homomorphisms between indecomposable modules in $\mo A$ and $u_t\in\rad_A^{\infty}(X_s,Y_t)$, $t\geq 1$, such that $h_1\ldots h_tu_t\neq 0$
for any $t\geq 1$. Then there exists $m\geq 1$ such that $Y_t$ belongs to the subquiver ${\Bbb N}\tau_A\Delta_{\Gamma}^{(l)}$ of
$\sD_{\Gamma}^{(l)} = {\Bbb N}\Delta_{\Gamma}^{(l)}$, for any $t\geq m$.
Let $U=X_s$, $V=Y_m$, and $f:U\to V$ a~nonzero homomorphism in $\rad_A^{\infty}(U,V)$.
Consider now a~monomorphism $v: V\to E_A(V)$, where $E_A(V)$ is an injective envelope of $V$ in $\mo A$. Observe that the subquiver
${\Bbb N}\tau_A\Delta_{\Gamma}^{(l)}$ has no injective module. Hence $v$ factors through a~module $N$ in $\add(\tau_A\Delta_{\Gamma}^{(l)})$,
and then there is a~monomorphism $w: V\to N$. Then $wu_m\neq 0$, and hence there is an indecomposable direct summand $W$ of $N$ such that $\Hom_A(U,W)\neq 0$.
But then $\Hom_A(M_{\Gamma}^{(l)},\tau_AM_{\Gamma}^{(l)})\neq 0$, because $U$ is a~direct summand of $M_{\Gamma}^{(l)}$ and $W$ is a~direct summand of
$\tau_AM_{\Gamma}^{(l)}$. This shows that (ii) implies (i).
\end{proof}

We have the dual proposition.
\begin{prop}\label{prop-D-left-dual}%
Let $A$ be an algebra, $\sC$ an almost periodic component of $\,\Gamma_A$ with $_r\sC^{\ac}$ nonempty, and $\Gamma$ a~component of $_r\sC^{\ac}$.
Then the following conditions are equivalent:
\begin{enumerate}
\renewcommand{\labelenumi}{\rm(\roman{enumi})}
\item $\sD_{\Gamma}^{(r)}$ is generalized standard.
\item $\Hom_A(\tau_A^{-1}M_{\Gamma}^{(r)},M_{\Gamma}^{(r)})=0$.
\item $A_{\Gamma}^{(r)}=B(M_{\Gamma}^{(r)})$ is a~tilted algebra of the form $\End_{H_{\Gamma}^{(r)}}(T_{\Gamma}^{(r)})$, for a~hereditary algebra $H_{\Gamma}^{(r)}$
of type $\Delta_{\Gamma}^{(r)}$ and a~tilting module $T_{\Gamma}^{(r)}$ in $\mo H_{\Gamma}^{(r)}$ without indecomposable postprojective direct summands, and
$\sD_{\Gamma}^{(r)}$ is the image of the postprojective component $\cP_{\Gamma}^{(r)}$ of $\Gamma_{H_{\Gamma}^{(r)}}$ via the functor
$\Ext_{H_{\Gamma}^{(r)}}^1(T_{\Gamma}^{(r)},-): \mo H_{\Gamma}^{(r)} \to \mo A_{\Gamma}^{(r)}$.
\end{enumerate}
\end{prop}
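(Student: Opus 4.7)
The plan is to derive this proposition from Proposition \ref{prop-D-left} by applying the standard $K$-duality $D \colon \mo A \to \mo A^{\op}$. This duality is an exact contravariant equivalence which reverses arrows in Auslander-Reiten quivers, interchanges the roles of projective and injective modules, and interchanges $\tau_A$ with $\tau_{A^{\op}}^{-1}$. Under $D$, a~component $\sC$ of $\Gamma_A$ is carried to a~component $D\sC$ of $\Gamma_{A^{\op}}$, and a~right stable acyclic component $\Gamma$ of $\sC$ corresponds to a~left stable acyclic component $D\Gamma$ of $D\sC$. The finite acyclic subquiver $\Delta_{\Gamma}^{(r)}$ determines a~choice of section $\Delta_{D\Gamma}^{(l)} = D\Delta_{\Gamma}^{(r)}$ in $D\Gamma$, with $\sD_{\Gamma}^{(r)}=(-\mathbb{N})\Delta_{\Gamma}^{(r)}$ mapped onto $\sD_{D\Gamma}^{(l)}=\mathbb{N}\Delta_{D\Gamma}^{(l)}$, and $DM_{\Gamma}^{(r)} = M_{D\Gamma}^{(l)}$.

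The next step is to translate each of the three conditions. Because $\rad_A^{\infty}(X,Y) \cong D\rad_{A^{\op}}^{\infty}(DY,DX)$ as $K$-modules, being generalized standard is self-dual, so (i) for $\sD_{\Gamma}^{(r)}$ is equivalent to condition (i) of Proposition~\ref{prop-D-left} applied to $\sD_{D\Gamma}^{(l)}$. For (ii), the identity $\Hom_A(X,Y)\cong\Hom_{A^{\op}}(DY,DX)$ together with $D\tau_A^{-1} = \tau_{A^{\op}}D$ gives a~natural isomorphism
\[ \Hom_A(\tau_A^{-1}M_{\Gamma}^{(r)},M_{\Gamma}^{(r)}) \cong \Hom_{A^{\op}}(M_{D\Gamma}^{(l)},\tau_{A^{\op}}M_{D\Gamma}^{(l)}), \]
which is precisely condition (ii) of Proposition~\ref{prop-D-left} for $A^{\op}$. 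Hence (i)$\Leftrightarrow$(ii) follows at once from the previous proposition.

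For the passage to (iii), I would apply Proposition~\ref{prop-D-left} to $A^{\op}$ to obtain $B(M_{D\Gamma}^{(l)}) = \End_{\widetilde{H}}(\widetilde{T})$ for some hereditary algebra $\widetilde{H}$ of type $\Delta_{D\Gamma}^{(l)}$ and a~tilting module $\widetilde{T}$ without indecomposable preinjective summands, with $\sD_{D\Gamma}^{(l)}$ realized as the image of the preinjective component of $\Gamma_{\widetilde{H}}$ under $\Hom_{\widetilde{H}}(\widetilde{T},-)$. Setting $H_{\Gamma}^{(r)}=\widetilde{H}^{\op}$ (hereditary of type $\Delta_{\Gamma}^{(r)}$) and $T_{\Gamma}^{(r)}=D\widetilde{T}$ (a~tilting module in $\mo H_{\Gamma}^{(r)}$ without indecomposable postprojective summands), the identification $A_{\Gamma}^{(r)} \cong \End_{H_{\Gamma}^{(r)}}(T_{\Gamma}^{(r)})$ is obtained by taking opposite algebras. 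The standard tilting-theoretic natural equivalence
\[ D \circ \Hom_{\widetilde{H}}(\widetilde{T},-) \;\cong\; \Ext_{H_{\Gamma}^{(r)}}^{1}(T_{\Gamma}^{(r)},-) \circ D \]
on indecomposable preinjective $\widetilde{H}$-modules then converts the preinjective component description of $\sD_{D\Gamma}^{(l)}$ into the postprojective component description of $\sD_{\Gamma}^{(r)}$ via $\Ext_{H_{\Gamma}^{(r)}}^{1}(T_{\Gamma}^{(r)},-)$ demanded in (iii).

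The main obstacle is purely bookkeeping: verifying the duality dictionary precisely, in particular the identity $B(M_{\Gamma}^{(r)}) = B(M_{D\Gamma}^{(l)})^{\op}$ (which mirrors the argument via injective envelopes in the proof of Proposition~\ref{prop-D-left}, replaced here by projective covers), together with the natural equivalence relating $\Hom_{\widetilde{H}}(\widetilde{T},-)$ on preinjectives with $\Ext^{1}_{H_{\Gamma}^{(r)}}(T_{\Gamma}^{(r)},-)$ on postprojectives under the tilting theorem. Once these points are settled, the proposition follows formally from Proposition~\ref{prop-D-left} applied to the opposite algebra $A^{\op}$.
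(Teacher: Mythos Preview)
Your approach is correct and matches the paper's own treatment: the paper gives no proof at all beyond the phrase ``We have the dual proposition,'' so your reduction to Proposition~\ref{prop-D-left} via the standard duality $D\colon \mo A \to \mo A^{\op}$ is precisely what is intended. Your identification of the dictionary (right stable acyclic $\leftrightarrow$ left stable acyclic, $\tau_A^{-1}\leftrightarrow\tau_{A^{\op}}$, etc.) and the equivalence of conditions (i) and (ii) under $D$ are correct; the bookkeeping you flag for (iii) is routine, and in practice one may equally well just rerun the argument of Proposition~\ref{prop-D-left} with projective covers in place of injective envelopes and the dual Liu--Skowro\'nski criterion, rather than chasing the functor identity you wrote down.
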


\section{Proof of Theorem \ref{thm2}} \label{sec3}

The aim of this section is to prove Theorem \ref{thm2}, using the results presented in the previous section.

Let $A$ be an algebra and $\sC$ be an infinite generalized standard component of $\Gamma_A$. Then, by \cite[Theorem 2.3]{S2}, $\sC$ is almost periodic.
We use the notation introduce above. We consider the following ideals of $A$:
\begin{itemize}
\item $I_{\sC}^{(l)}$ the annihilator of the disjoint union of the translation quivers $\sD_{\Gamma}^{(l)}$, for all components $\Gamma$ of $_l\sC^{\ac}$,
\item $I_{\sC}^{(r)}$ the annihilator of the disjoint union of the translation quivers $\sD_{\Gamma}^{(r)}$, for all components $\Gamma$ of $_r\sC^{\ac}$,
\item $I_{\sC}^{(c)}$ the annihilator of the disjoint union of all components $\Gamma$ of $_c\sC^{\coh}$.
\end{itemize}
Further, we consider the quotient algebras $A_{\sC}^{(lt)}=A/I_{\sC}^{(l)}$, $A_{\sC}^{(rt)}=A/I_{\sC}^{(r)}$, $A_{\sC}^{(c)}=A/I_{\sC}^{(c)}$.
Then
it follows from Propositions
\ref{prop-apcoh}, \ref{prop-genst}, \ref{prop-D-left} and \ref{prop-D-left-dual} that the required statements (i), (ii), (iii) are satisfied.

We prove that the statement (iv) holds. Indeed, let $X$ be an acyclic module in $\sC$ which does not belong to a~subquiver of the form $\sD_{\Gamma}^{(l)}$
or $\sD_{\Gamma}^{(r)}$. Then there exists a~nonnegative integer $m_X$ such that $\tau_A^{m_X}X$ is projective, or belongs to $\Delta_{\Gamma}^{(l)}$,
for a~component $\Gamma$ of $_l\sC^{\ac}$, or belongs to the right border $\Sigma_{\Gamma}^{(r)}$ of a~component $\Gamma$ of $_c\sC^{\ac}$.
Similarly, there is a~nonnegative integer $n_X$ such that $\tau_A^{-n_X}X$ is injective, or belongs to $\Delta_{\Gamma}^{(r)}$,
for a~component $\Gamma$ of $_r\sC^{\ac}$, or belongs to the left border $\Sigma_{\Gamma}^{(l)}$ of a~component $\Gamma$ of $_c\sC^{\ac}$.
Hence there are at most finitely many such acyclic modules in $\sC$, and (iv) follows.

The statement (v) will follow from the facts below.
\begin{enumerate}
\renewcommand{\labelenumi}{\rm(\arabic{enumi})}
\item It follows from \cite[Proposition 2.4 and Theorem 2.5]{RS3} that every finite component of $_c\sC$ contains an indecomposable projective module and indecomposable injective module.
Hence there are at most finitely many indecomposable modules lying in finite cyclic components of $_c\sC$.
\item An indecomposable module $X$ in $\sC$ is said to be stable if it is left stable and right stable. We denote by $_s\sC$ the stable part of $\sC$
obtained by removing all nonstable modules and the arrows attached to them. It is known (see \cite{L2}, \cite{Z} or \cite[Theorem IX.4.9]{SY4}) that
an infinite component $\Gamma$ of $_s\sC$ contains an oriented cycle if and only if $\Gamma$ is a~stable tube.
Moreover, every stable tube of $_s\sC$ consists of periodic coherent modules, and hence is contained in $_c\sC^{\coh}$. On the other hand, every finite
component $\Gamma$ of $_s\sC$ containing an oriented cycle consists of periodic modules, contains an immediate predecessor of a~projective module
in $\sC$ (respectively, an immediate successor of an~injective module in $\sC$). Hence the number of indecomposable modules in $_c\sC\setminus{_c\sC^{\coh}}$
lying in $_s\sC$ is finite.
\item Let $X$ be a~module in $_c\sC\setminus{_c\sC^{\coh}}$ such that $\tau_A^nX$ belongs to ${_c\sC^{\coh}}$ for some positive integer $n$.
Then there exist $m\in\{1,\ldots,n\}$ and a~component $\Gamma$ of ${_c\sC^{\coh}}$ such that $\tau_A^mX$ belongs to the right border
$\Sigma_{\Gamma}^{(r)}$ of $\Gamma$.
\item Let $X$ be a~module in $_c\sC\setminus{_c\sC^{\coh}}$ such that $\tau_A^{-n}X$ belongs to ${_c\sC^{\coh}}$ for some positive integer $n$.
Then there exist $m\in\{1,\ldots,n\}$ and a~component $\Gamma$ of ${_c\sC^{\coh}}$ such that $\tau_A^{-m}X$ belongs to the left border
$\Sigma_{\Gamma}^{(l)}$ of $\Gamma$.
\item Let $X$ be a~nonperiodic, nonprojective module in $_c\sC\setminus{_c\sC^{\coh}}$ such that, for any positive integer $r$, $\tau_A^rX$
does not belong to ${_c\sC^{\coh}}$. Then there exists a~positive integer $m$ such that $\tau_A^mX$ is projective or belongs to $\Delta_{\Gamma}^{(l)}$
for a~component $\Gamma$ of ${_l\sC^{\ac}}$.
\item Let $X$ be a~nonperiodic, noninjective module in $_c\sC\setminus{_c\sC^{\coh}}$ such that, for any positive integer $s$, $\tau_A^{-s}X$
does not belong to ${_c\sC^{\coh}}$. Then there exists a~positive integer $n$ such that $\tau_A^{-n}X$ is injective or belongs to $\Delta_{\Gamma}^{(r)}$
for a~component $\Gamma$ of ${_r\sC^{\ac}}$.
\end{enumerate}
Summing up, we conclude that $_c\sC\setminus{_c\sC^{\coh}}$ is finite, which is the statement (v).

\smallskip

We end this section with an example illustrating the above considerations.

\begin{ex} \label{ex-3}
Let $K$ be an algebraically closed field, $Q$ the quiver of the form
%
\[
\xymatrix@C=13pt@R=12pt{
&&\scr{30} \\
\scr{32}&&&\scr{27}\ar[lu]_{\vartheta}\ar[ld]_{\zeta_2} \\
\scr{33}&&\scr{29}\ar[ld]_{\kappa}&&\scr{26}\ar[lu]_{\zeta_1}\ar[ld]^{\zeta_3}&\scr{18}\ar[l]_{n} \\
&\scr{31}\ar[lu]^{\delta_2}\ar[luu]_{\delta_1}\ar[ld]_{\delta_3}\ar[ldd]^{\delta_4}&&\scr{28}\ar[lu]^{\zeta_4}&&&\scr{17}\ar[lu]_{m} \\
\scr{34}&&&\scr{9}\ar[rd]^{\eta}&&\scr{16}\ar[ld]_{\psi}\ar[ru]^{l}&&\scr{19}\ar[rd]^{i}\ar[ll]_{j}\\
\scr{35}&&\scr{10}\ar[ru]^{\xi}\ar[rd]_{\mu}\ar[dd]_{\pi}&&\scr{7}\ar[dddd]^{\rho}&&&&\scr{20}\ar[d]^{h}\\
&&&\scr{8}\ar[ru]_{\nu}&&&&\scr{25}\ar@<2pt>[dd]^{t}\ar@<-2pt>[dd]_{s}&\scr{21}\ar[dd]^{g}\ar[rd]^{f}\\
\scr{0}&\scr{1}\ar[l]_{\theta}&\scr{2}\ar[l]_{\omega}\ar[dd]_{\lambda}&&&&&&&\scr{22}\ar[ld]^{e}\\
&&&\scr{5}\ar[ld]_{\beta}&&\scr{23}\ar[rd]_{r}&&\scr{24}\ar[ll]_{q}\ar[ld]^{p}&\scr{15}\ar[d]^{d}\\
&&\scr{3}&&\scr{6}\ar[ld]^{\sigma}\ar[lu]_{\alpha}\ar[rd]_{\varphi}&&\scr{12}\ar[ld]^{a}\ar[rd]_{b}&&\scr{14}\ar[ld]^{c}\\
&&&\scr{4}\ar[lu]^{\gamma}&&\scr{11}&&\scr{13}\\
}
\]
and $I$ the ideal in the path algebra $KQ$ of $Q$ over $K$ generated by the paths
$\alpha\beta - \sigma\gamma$, $\xi\eta - \mu\nu$,
$\pi\lambda - \xi\eta\rho\alpha\beta$, $\rho\varphi$, $\psi\rho$, $jl$, $dc$, $ed$, $gd$, $hg$, $hf$, $ih$,
$pa$, $pb$, $ra$, $rb$, $qr$, $sq$, $sp$, $tq$, $tp$,
$mn$, $n\zeta_1$, $n\zeta_3$, $\zeta_1\vartheta$, $\zeta_1\zeta_2$, $\zeta_3\zeta_4$, $\zeta_2\kappa$, $\zeta_4\kappa$, $\kappa\delta_1$, $\kappa\delta_2$, $\kappa\delta_3$, $\kappa\delta_4$,
and $A=KQ/I$.
We mention that this is an enlarged version of \cite[Example 6.4]{MPS1}.
The Auslander-Reiten quiver $\Gamma_A$ admits a~generalized standard component $\sC$ obtained by identification of the common simple modules $S_{12}$ and $S_{18}$ occurring
in the following three translation quivers: $\Gamma_1$ of the form
\[ \hspace{-12mm}\xymatrix@C=5.5pt@R=6pt{
&&&&&&&&&&&&&&&& \scr{P_{22}}\ar[rd] && \scr{I_{22}}\ar[rd] \cr
&&&&&&&&&&&&\scr{P_{15}}\ar@{=}[r]& \scr{I_{14}}\ar[rr] && \scr{S_{15}}\ar[rd]\ar[ru] && \scr{R}\ar[rd]\ar[ru] && \scr{S_{21}}\ar[rr] && \scr{P_{20}}\ar[lldddd]\ar@{=}[r]&\scr{I_{21}} \cr
&&&&&&&&&&&&&&&& \scr{P_{21}}\ar[rd]\ar[ru] && \scr{I_{15}}\ar[rd]\ar[ru] \cr
&&&&&&&&&&&&&&& \scr{S_{22}}\ar[ru] && \scr{L}\ar[ru] && \scr{S_{22}} \cr
&&&&&&&&&&&&&&&&&&&&&&& \cr
&&&&&&&&&&&&&&&&&&& \scr{S_{20}}\ar[rd] && \scr{\phantom{|}\circ\phantom{|}}\ar[rd] && \scr{\phantom{|}\circ\phantom{|}}\ar@{--}[ddddddddd] \cr
\scr{\phantom{|}\circ\phantom{|}}\ar[rd]\ar@{--}[dddddddd] && \scr{\phantom{|}\circ\phantom{|}}\ar[rd] && \scr{\phantom{|}\circ\phantom{|}}\ar[rd] && \scr{S_{2}}\ar[rd] && \scr{\phantom{|}\circ\phantom{|}}\ar[rd] && \scr{\phantom{|}\circ\phantom{|}}\ar[rd] && \scr{T}\ar[rd] && \scr{S_{14}}\ar[luuuuu] &&&&&& \scr{P_{19}}\ar[rd]\ar[ru] && \scr{\phantom{|}\circ\phantom{|}}\ar[rd]\ar[ru] \cr
& \scr{\phantom{|}\circ\phantom{|}}\ar[rd]\ar[ru]\ar[r] & \scr{P_{10}}\ar[r] & \scr{\phantom{|}\circ\phantom{|}}\ar[rd]\ar[ru]\ar[r] & \scr{\phantom{|}\circ\phantom{|}}\ar[r] & \scr{\phantom{|}\circ\phantom{|}}\ar[rd]\ar[ru]\ar[r] & \scr{I_{3}}\ar[r] & \scr{\phantom{|}\circ\phantom{|}}\ar[rd]\ar[ru] && \scr{\phantom{|}\circ\phantom{|}}\ar[rd]\ar[ru] && \scr{\phantom{|}\circ\phantom{|}}\ar[rd]\ar[ru]
&& \scr{I_{13}}\ar[rd]\ar[ru] && \scr{S_{18}}\ar[rd] && \scr{S_{17}}\ar[rd] && \scr{\phantom{|}\circ\phantom{|}}\ar[rd]\ar[ru] && \scr{\phantom{|}\circ\phantom{|}}\ar[rd]\ar[ru] && \scr{\phantom{|}\circ\phantom{|}} \cr
\scr{\phantom{|}\circ\phantom{|}}\ar[rd]\ar[ru] && \scr{\phantom{|}\circ\phantom{|}}\ar[rd]\ar[ru] && \scr{\phantom{|}\circ\phantom{|}}\ar[rd]\ar[ru] && \scr{\phantom{|}\circ\phantom{|}}\ar[rd]\ar[ru] && \scr{\phantom{|}\circ\phantom{|}}\ar[rd]\ar[ru] && \scr{\phantom{|}\circ\phantom{|}}\ar[rd]\ar[ru] && \scr{\phantom{|}\circ\phantom{|}}\ar[rd]\ar[ru] && \scr{S_{12}}
&& \scr{P_{17}}\ar[rd]\ar[ru] && \scr{\phantom{|}\circ\phantom{|}}\ar[rd]\ar[ru] && \scr{\phantom{|}\circ\phantom{|}}\ar[rd]\ar[ru] && \scr{\phantom{|}\circ\phantom{|}}\ar[rd]\ar[ru] \cr
& \scr{\phantom{|}\circ\phantom{|}}\ar[rd]\ar[ru] && \scr{\phantom{|}\circ\phantom{|}}\ar[rd]\ar[ru] && \scr{\phantom{|}\circ\phantom{|}}\ar[rd]\ar[ru] && \scr{\phantom{|}\circ\phantom{|}}\ar[rd]\ar[ru] && \scr{\phantom{|}\circ\phantom{|}}\ar[rd]\ar[ru] && \scr{\phantom{|}\circ\phantom{|}}\ar[rd]\ar[ru]
&& \scr{I_{11}}\ar[rd]\ar[ru] &&&& \scr{P_{16}}\ar[rd]\ar[ru] && \scr{\phantom{|}\circ\phantom{|}}\ar[rd]\ar[ru] && \scr{\phantom{|}\circ\phantom{|}}\ar[rd]\ar[ru] && \scr{\phantom{|}\circ\phantom{|}} \cr
\scr{\phantom{|}\circ\phantom{|}}\ar[rd]\ar[ru] && \scr{\phantom{|}\circ\phantom{|}}\ar[rd]\ar[ru] && \scr{E}\ar[rd]\ar[ru] && \scr{\phantom{|}\circ\phantom{|}}\ar[rd]\ar[ru] && \scr{\phantom{|}\circ\phantom{|}}\ar[rd]\ar[ru] && \scr{\phantom{|}\circ\phantom{|}}\ar[rd]\ar[ru] && \scr{\phantom{|}\circ\phantom{|}}\ar[rd]\ar[ru] && \scr{S_{6}}\ar[rd]
&& \scr{S_{7}}\ar[rd]\ar[ru] && \scr{\phantom{|}\circ\phantom{|}}\ar[rd]\ar[ru] && \scr{\phantom{|}\circ\phantom{|}}\ar[rd]\ar[ru] && \scr{\phantom{|}\circ\phantom{|}}\ar[rd]\ar[ru] \cr
& \scr{\phantom{|}\circ\phantom{|}}\ar[rd]\ar[ru] && \scr{\phantom{|}\circ\phantom{|}}\ar[rd]\ar[ru] && \scr{\phantom{|}\circ\phantom{|}}\ar[rd]\ar[ru] && \scr{\phantom{|}\circ\phantom{|}}\ar[rd]\ar[ru] && \scr{\phantom{|}\circ\phantom{|}}\ar[rd]\ar[ru] && \scr{\phantom{|}\circ\phantom{|}}\ar[rd]\ar[ru]
&& \scr{\phantom{|}\circ\phantom{|}}\ar[rd]\ar[ru] && \scr{\phantom{|}\circ\phantom{|}}\ar[rd]\ar[ru] && \scr{\phantom{|}\circ\phantom{|}}\ar[rd]\ar[ru] && \scr{\phantom{|}\circ\phantom{|}}\ar[rd]\ar[ru] && \scr{\phantom{|}\circ\phantom{|}}\ar[rd]\ar[ru] && \scr{\phantom{|}\circ\phantom{|}} \cr
\scr{\phantom{|}\circ\phantom{|}}\ar[rd]\ar[ru] && \scr{\phantom{|}\circ\phantom{|}}\ar[rd]\ar[ru] && \scr{\phantom{|}\circ\phantom{|}}\ar[rd]\ar[ru] && \scr{\phantom{|}\circ\phantom{|}}\ar[rd]\ar[ru] && \scr{\phantom{|}\circ\phantom{|}}\ar[rd]\ar[ru] && \scr{\phantom{|}\circ\phantom{|}}\ar[rd]\ar[ru] && \scr{\phantom{|}\circ\phantom{|}}\ar[rd]\ar[ru] && \scr{\phantom{|}\circ\phantom{|}}\ar[rd]\ar[ru]
&& \scr{\phantom{|}\circ\phantom{|}}\ar[rd]\ar[ru] && \scr{\phantom{|}\circ\phantom{|}}\ar[rd]\ar[ru] && \scr{\phantom{|}\circ\phantom{|}}\ar[rd]\ar[ru] && \scr{\phantom{|}\circ\phantom{|}}\ar[rd]\ar[ru] \cr
& {\phantom{\scr{\phantom{|}\circ\phantom{|}}}}\ar[ru] && {\phantom{\scr{\phantom{|}\circ\phantom{|}}}}\ar[ru] && {\phantom{\scr{\phantom{|}\circ\phantom{|}}}}\ar[ru] && {\phantom{\scr{\phantom{|}\circ\phantom{|}}}}\ar[ru] && {\phantom{\scr{\phantom{|}\circ\phantom{|}}}}\ar[ru] && {\phantom{\scr{\phantom{|}\circ\phantom{|}}}}\ar[ru]
&& {\phantom{\scr{\phantom{|}\circ\phantom{|}}}}\ar[ru] && {\phantom{\scr{\phantom{|}\circ\phantom{|}}}}\ar[ru] && {\phantom{\scr{\phantom{|}\circ\phantom{|}}}}\ar[ru] && {\phantom{\scr{\phantom{|}\circ\phantom{|}}}}\ar[ru] && {\phantom{\scr{\phantom{|}\circ\phantom{|}}}}\ar[ru] && {\phantom{\scr{\phantom{|}\circ\phantom{|}}}} \cr
& {\phantom{\scr{\phantom{|}\circ\phantom{|}}}}\ar@{.}[u] && {\phantom{\scr{\phantom{|}\circ\phantom{|}}}}\ar@{.}[u] && {\phantom{\scr{\phantom{|}\circ\phantom{|}}}}\ar@{.}[u] && {\phantom{\scr{\phantom{|}\circ\phantom{|}}}}\ar@{.}[u] && {\phantom{\scr{\phantom{|}\circ\phantom{|}}}}\ar@{.}[u] && {\phantom{\scr{\phantom{|}\circ\phantom{|}}}}\ar@{.}[u]
&& {\phantom{\scr{\phantom{|}\circ\phantom{|}}}}\ar@{.}[u] && {\phantom{\scr{\phantom{|}\circ\phantom{|}}}}\ar@{.}[u] && {\phantom{\scr{\phantom{|}\circ\phantom{|}}}}\ar@{.}[u] && {\phantom{\scr{\phantom{|}\circ\phantom{|}}}}\ar@{.}[u] && {\phantom{\scr{\phantom{|}\circ\phantom{|}}}}\ar@{.}[u] && {\phantom{\scr{\phantom{|}\circ\phantom{|}}}} \cr
}\]
$\Gamma_2$ of the form
\[ \xymatrix@C=6pt@R=2pt{
&&&&&\scr{P_{23}}\ar[rdd]&&\scr{I_{23}}\ar[rdd]&&\scr{P_{25}}\ar@<2pt>[rdd]\ar@<-2pt>[rdd]&&{\phantom{PP}}\cdots\\
&&&\\
&&&&\scr{S_{12}}\ar[ruu]\ar[rdd]&&\scr{V}\ar[ruu]\ar[rdd]&&\scr{S_{24}}\ar@<2pt>[ruu]\ar@<-2pt>[ruu]&&\bullet\ar@<2pt>[ruu]\ar@<-2pt>[ruu]\\
&&&\\
&&&&&\scr{P_{24}}\ar[ruu]\ar[rdd]&&\scr{I_{12}}\ar[rdd]\ar[ruu] \\
&&& \\
&&&&\scr{S_{23}}\ar[ruu]\ar[ruu]&&\scr{P_{24}/S_{23}}\ar[ruu]&&\scr{S_{23}} \\
}\]
and $\Gamma_3$ of the form
\[ \xymatrix@C=8pt@R=8pt{
&{\phantom{I}}\ar[rddd]&&\scr{I_{32}}\ar[rddd]\\
&&&&&&\scr{S_{30}}\ar[rd]&&\scr{P_{27}/S_{30}}\ar[rd]&&\scr{S_{28}}\ar[rd]&&\scr{I_{27}}\ar[rd] \\
&{\phantom{I}}\ar[rd]&&\scr{I_{33}}\ar[rd]&&&&\scr{P_{27}}\ar[rd]\ar[ru]&&\scr{I_{29}}\ar[rd]\ar[ru]&&\scr{P_{26}}\ar[rd]\ar[ru]&&\scr{S_{26}}\ar[rddd]&&\scr{S_{18}} \\
\cdots&&\bullet\ar[ruuu]\ar[ru]\ar[rd]\ar[rddd]&&\scr{S_{31}}\ar[rdd]&&\scr{S_{29}}\ar[ru]\ar[rd]&&\scr{W}\ar[rd]\ar[ru]&&\scr{S_{27}}\ar[ru]&&\scr{I_{28}}\ar[ru] \\
&{\phantom{I}}\ar[ru]&&\scr{I_{34}}\ar[ru]&&&&\scr{P_{28}}\ar[ru]&&\scr{I_{30}}\ar[ru]\\
&&&&&\scr{P_{29}=I_{31}}\ar[ruu]&&&&&&&&&\scr{P_{18}=I_{26}}\ar[ruuu] \\
&{\phantom{I}}\ar[ruuu]&&\scr{I_{35}}\ar[ruuu] \\
&&&&&&&& \\
}\]
The cyclic part ${}_c{\sC}$ of $\sC$ consists of one infinite component $\Gamma$ and one finite component $\Gamma'$ described as follows.
The infinite cyclic component $\Gamma$ is obtained by removing from $\sC$ the modules $S_{12}, S_{17}, S_{18}, P_{17}$ together with the arrows attached to them,
and the translation quivers $\Gamma_2$, $\Gamma_3$.
The finite cyclic component $\Gamma'$ is the full translation subquiver of $\sC$ given by the vertices $S_{23}, P_{24}, P_{24}/S_{23}, V, I_{12}$.
The maximal cyclic coherent part $_c\Gamma^{\coh}$ of $\Gamma$ is the full translation subquiver of $\sC$ obtained by
removing from $\sC$ the modules $S_{12}$, $I_{13}$, $T$, $S_{14}$, $P_{15}=I_{14}$, $S_{15}$, $P_{21}$, $S_{22}$, $L$, $P_{22}$, $R$, $I_{15}$, $I_{22}$,
$S_{21}$, $P_{20}=I_{21}$, $S_{20}$, $S_{17}$, $P_{17}$, $S_{18}$
together with the arrows attached to them (see \cite[Example 6.4]{MPS1}), and the translation quivers $\Gamma_2$, $\Gamma_3$.
We observe that $S_{22}$ is a~periodic module of $\Gamma$ which does not belong to $_c\Gamma^{\coh}$.

Moreover, we have
\[ P_{\sC} = P_{10}\oplus P_{15}\oplus P_{16}\oplus P_{17}\oplus P_{18}\oplus P_{19}\oplus P_{20}\oplus P_{21}\oplus P_{22}\oplus P_{23}\oplus
P_{24}\oplus P_{25}\oplus P_{26}\oplus P_{27}\oplus P_{28}\oplus P_{29}\oplus P_{30}, \]
\[ P_{\sC}^{\coh} = P_{10}\oplus P_{16}\oplus P_{17}\oplus P_{19}\oplus P_{20},  \]
\[ Q_{\sC} = I_{3}\oplus  I_{11}\oplus  I_{12}\oplus  I_{13}\oplus  I_{14}\oplus  I_{15}\oplus  I_{21}\oplus  I_{22}\oplus  I_{23}\oplus
I_{26}\oplus  I_{27}\oplus  I_{28}\oplus  I_{29}\oplus  I_{30}\oplus  I_{31}\oplus  I_{32}\oplus  I_{33}\oplus  I_{34}\oplus  I_{35},  \]
\[ Q_{\sC}^{\coh} = I_{3}\oplus  I_{11}\oplus  I_{13}\oplus  I_{14},  \]
\[ M_{\sC}^{(l)} = I_{32}\oplus  I_{33}\oplus  I_{34}\oplus  I_{35}\oplus S_{31}, \,\,\,
 M_{\sC}^{(r)} = P_{25}\oplus S_{24}, \,\,\,
 M_{\sC}^{(t)} = S_{6}\oplus S_{7}\oplus E. \]
 We note that $P_{20}$ is a~right coherent projective module and $I_{14}$ is a~left coherent injective module in $\Gamma$ which do not belong
 to $_c\Gamma^{\coh}$.
\end{ex}

\section{Proof of Theorem \ref{thm1}} \label{sec4}

Let $A$ be an algebra and $\sC$ be an infinite component of $\Gamma_A$. Since every generalized standard component is almost periodic,
we may assume that $\sC$ is almost periodic. We use the notation introduced in Section \ref{sec2}. Hence
\begin{itemize}
\item $M_{\sC}^{(l)}$ is the direct sum of all modules $M_{\Gamma}^{(l)}$ given by the quivers $\Delta_{\Gamma}^{(l)}$ associated to all
components $\Gamma$ of the left stable acyclic part $_l\sC^{\ac}$ of $\sC$,
\item $M_{\sC}^{(r)}$ is the direct sum of all modules $M_{\Gamma}^{(r)}$ given by the quivers $\Delta_{\Gamma}^{(r)}$ associated to all
components $\Gamma$ of the right stable acyclic part $_r\sC^{\ac}$ of $\sC$,
\item $M_{\sC}^{(t)}$ is the direct sum of all modules $M_{\Gamma}^{(t)}$ given by the mouth of the tubular parts $\cT_{\Gamma}$ of all
components $\Gamma$ of the cyclic coherent part $_c\sC^{\coh}$ of $\sC$.
\end{itemize}
Further, let
\begin{itemize}
\item $P_{\sC}$ be the direct sum of all projective modules in $\sC$, and $P_{\sC}^{\coh}$ the direct sum of all right coherent projective modules in $\sC$,
\item $Q_{\sC}$ be the direct sum of all injective modules in $\sC$, and $Q_{\sC}^{\coh}$ the direct sum of all left coherent injective modules in $\sC$.
\end{itemize}
Moreover, let $B(\sC)$ be the faithful algebra $A/\ann_A(\sC)$ of $\sC$. Then $\sC$ is a~faithful component of $\Gamma_{B(\sC)}$ and
$\Hom_A(X,Y)=\Hom_{B(\sC)}(X,Y)$ for any modules $X$ and $Y$ in $\sC$. By general theory, we know that $\sC$ is a~generalized standard component in $\Gamma_A$
if and only if $\sC$ is a~generalized standard component in $\Gamma_{B(\sC)}$. Therefore, we may assume that $A=B(\sC)$.

We first prove that (i) implies (ii).
Assume that $\sC$ is generalized standard. The any nonzero homomorphism $f:X\to Y$ between two indecomposable modules $X$ and $Y$ in $\sC$
is a~finite sum of compositions of irreducible homomorphisms between indecomposable modules in $\sC$
(see \cite[Proposition V.7.5]{ARS} or \cite[Proposition VII.3.9]{SY4}). In particular, $\rad_A(X,Y)\neq 0$, for $X$ and $Y$ in $\sC$, implies that
there is a~path in $\sC$ from $X$ to $Y$.
\begin{enumerate}
\renewcommand{\labelenumi}{\rm(\arabic{enumi})}
\item It follows from the definition of $M_{\sC}^{(l)}$ that there is no path in $\sC$ from an indecomposable direct summand of
$P_{\sC}\oplus M_{\sC}^{(t)}\oplus M_{\sC}^{(r)}$ to an indecomposable direct summand of $M_{\sC}^{(l)}$, and hence
$\Hom_A(P_{\sC}\oplus M_{\sC}^{(t)}\oplus M_{\sC}^{(r)},M_{\sC}^{(l)})=0$. Similarly, it follows from the definition of $M_{\sC}^{(r)}$ that there
is no path in $\sC$ from an indecomposable direct summand of $M_{\sC}^{(r)}$ to an indecomposable direct summand of
$Q_{\sC}\oplus M_{\sC}^{(t)}\oplus M_{\sC}^{(l)}$, and hence
$\Hom_A(M_{\sC}^{(r)},Q_{\sC}\oplus M_{\sC}^{(t)}\oplus M_{\sC}^{(l)})=0$.
\item For a~component $\Gamma$ of $_l\sC^{\ac}$, the translation quiver $\sD_{\Gamma}^{(l)}$ is generalized standard, and then
$\Hom_A(M_{\Gamma}^{(l)},\tau_AM_{\Gamma}^{(l)})=0$, by Proposition \ref{prop-D-left}. Similarly, for a~component $\Gamma$ of $_r\sC^{\ac}$,
the translation quiver $\sD_{\Gamma}^{(r)}$ is generalized standard, and then
$\Hom_A(\tau_A^{-1}M_{\Gamma}^{(r)},M_{\Gamma}^{(r)})=0$, by Proposition \ref{prop-D-left-dual}.
Therefore, we conclude that 
\[ \Hom_A(M_{\sC}^{(l)},\tau_AM_{\sC}^{(l)})=0 \,\,\,\text{and}\,\,\, \Hom_A(\tau_A^{-1}M_{\sC}^{(r)},M_{\sC}^{(r)})=0. \]
\item Let $\Gamma$ be a~component of $_c\sC^{\coh}$ and $\cT_{\Gamma}$ the tubular part of $\Gamma$. Since $\sC$ is generalized standard,
we conclude that $\Gamma$ is also generalized standard. Then it follows from Proposition \ref{prop-genst} that $\cT_{\Gamma}$ is a~finite
faithful family of pairwise orthogonal generalized standard stable tubes in $\Gamma_{B(\cT_{\Gamma})}$ and $B(\Gamma)$ is a~generalized
multicoil enlargement of $B(\cT_{\Gamma})$ with respect to the family $\cT_{\Gamma}$.
In particular, applying Proposition \ref{prop-brick}, we obtain that $\rad_A(M_{\Gamma}^{(t)},M_{\Gamma}^{(t)})=0$.
Moreover, the simple composition factors of $M_{\Gamma}^{(t)}$ are not direct summands of $\tom(P_{\sC})\oplus\soc(Q_{\sC})$, and hence
$\Hom_A(P_{\sC},M_{\Gamma}^{(t)})=0$ and $\Hom_A(M_{\Gamma}^{(t)},Q_{\sC})=0$. We also note that, for two different components
$\Gamma$ and $\Omega$ of $_c\sC^{\coh}$, the tubular parts $\cT_{\Gamma}$ and $\cT_{\Omega}$ are disjoint and orthogonal, by
Propositions \ref{prop-apcoh} and \ref{prop-genst}, and hence
$\rad_A(M_{\Gamma}^{(t)},M_{\Omega}^{(t)})=0$ and $\rad_A(M_{\Omega}^{(t)},M_{\Gamma}^{(t)})=0$. Summing up we have
\[ \Hom_A(P_{\sC},M_{\sC}^{(t)})=0,\hspace{3mm} \Hom_A(M_{\sC}^{(t)},Q_{\sC})=0,\hspace{3mm} \rad_A(M_{\sC}^{(t)},M_{\sC}^{(t)})=0. \]
\item We shall prove that $\Hom_A(\tom(P_{\sC}^{\coh}),Q_{\sC})=0$. Let $S=\tom(P)$ for an indecomposable direct summand $P$ of $P_{\sC}^{\coh}$.
Assume first that $S$  does not belong to $\sC$. Since $\sC$ is generalized standard and there is a~canonical epimorphism $P\to S$,
we conclude that $\Hom_A(S,\soc(Q_{\sC}))=\Hom_A(S,Q_{\sC})=0$. Therefore, assume that $S$ belongs to $\sC$. Since $P$ is right coherent,
there exists an infinite sectional path
\[P = X_0 \longrightarrow X_1 \longrightarrow \cdots \longrightarrow X_i\longrightarrow X_{i+1} \longrightarrow X_{i+2} \longrightarrow \cdots \]
with all but finitely many modules from a~cyclic generalized multicoil $\Gamma$ of $_c\sC^{\coh}$ (see Proposition \ref{prop-apcoh}).
Moreover, by Propositions \ref{prop-apcoh} and \ref{prop-genst}, $B(\Gamma)$ is a~generalized multicoil enlargement of $B(\cT_{\Gamma})$
with respect to the tubular family $\cT_{\Gamma}$ of $\Gamma$, and $\Gamma$ is the cyclic part of the generalized multicoil $\Omega$,
created from $\cT_{\Gamma}$ by iterated application of translation quiver admissible operations corresponding to the algebra admissible operations
leading from $B(\cT_{\Gamma})$ to $B(\Gamma)$.
We have also $B(\Gamma)=B(\Omega)$. By Proposition \ref{prop-gme} we may obtain $B(\Omega)$ from $B(\cT_{\Gamma})$ applying first a~sequence
of admissible operations of type (ad~1), creating a~tubular extension $B$ of $B(\cT_{\Gamma})$, and then applying a~sequence
of admissible operations of types (ad~1$^*$)-(ad~5$^*$). Then the projective module $P$ and its top $S$ are acyclic modules in one of the
created ray tubes of $\Gamma_B$. In particular, we conclude that the injective envelope $E_{B(\Omega)}(S)$ of $S$ in $\mo B(\Omega)$
does not belong to $\Omega$, and consequently $\rad_{B(\Omega)}^{\infty}(S,E_{B(\Omega)}(S))\neq 0$.
Since $B(\Omega)$ is a~quotient algebra of $A$, we conclude that $E_{B(\Omega)}(S)$ is a~submodule of an injective envelope $E_A(S)$ of $S$
in $\mo A$, and hence $\rad_A^{\infty}(S,E_A(S))\neq 0$.
This shows that $E_A(S)$ does not belong to $\sC$, because $\sC$ is generalized standard. Hence $\Hom_A(S,Q_{\sC})=0$.
Therefore, we obtain $\Hom_A(\tom(P_{\sC}^{\coh}),Q_{\sC})=0$.
\item Applying dual arguments to those in (4), we conclude that $\Hom_A(P_{\sC},\soc(Q_{\sC}^{\coh}))=0$.
\end{enumerate}
This finishes the proof of the implication (i) $\Longrightarrow$ (ii).

Assume now that the conditions of (ii) are satisfied. We shall show that $\sC$ is generalized standard in few steps.
We use the notation introduced in Section \ref{sec2}.
Moreover, we denote by $\sD_{\sC}^{(l)}$ the disjoint union of the translation quivers $\sD_{\Gamma}^{(l)}$ for all components $\Gamma$
of $_l\sC^{\ac}$, and by $\sD_{\sC}^{(r)}$ the disjoint union of the translation quivers $\sD_{\Gamma}^{(r)}$ for all components $\Gamma$
of $_r\sC^{\ac}$.
%
\begin{enumerate}
\renewcommand{\labelenumi}{\rm(\alph{enumi})}
\item Let $\Gamma$ be a~component of $_l\sC^{\ac}$. Since $\Hom_A(M_{\sC}^{(l)},\tau_AM_{\sC}^{(l)})=0$, we have
$\Hom_A(M_{\Gamma}^{(l)},\tau_AM_{\Gamma}^{(l)})$ $=0$ and hence the translation quiver $\sD_{\Gamma}^{(l)}$ is generalized standard,
by Proposition \ref{prop-D-left}.
\item Let $\Gamma$ be a~component of $_r\sC^{\ac}$. Since $\Hom_A(\tau_A^{-1}M_{\sC}^{(r)},M_{\sC}^{(r)})=0$, we have 
 $\Hom_A(\tau_A^{-1}M_{\Gamma}^{(r)},$ $M_{\Gamma}^{(r)})=0$ and hence the translation quiver $\sD_{\Gamma}^{(r)}$ is generalized standard,
by Proposition \ref{prop-D-left-dual}.
\item Let $\Gamma$ be a~component of $_c\sC^{\coh}$. Then the condition $\rad_A(M_{\sC}^{(t)},M_{\sC}^{(t)})=0$ implies 
$\rad_A(M_{\Gamma}^{(t)},$ $M_{\Gamma}^{(t)})=0$. Let $\cT_{\Gamma}$ be the tubular family of $\Gamma$, $C=B(\cT_{\Gamma})$ and
$B=B(\Gamma)$. Since $\Hom_A(P_{\sC},M_{\Gamma}^{(t)})=0$ (respectively, $\Hom_A(M_{\Gamma}^{(t)},Q_{\sC})=0$), we have
$\Hom_A(P_{\sC},Z)=0$ (respectively, $\Hom_A(Z,Q_{\sC})$ $=0$) for any indecomposable module $Z$ in $\cT_{\Gamma}$. Hence $\cT_{\Gamma}$
is a~faithful family of stable tubes in $\Gamma_C$ with $\rad_C(M_{\Gamma}^{(t)},M_{C}^{(t)})=0$. Then it follows from Proposition \ref{prop-brick}
that $\cT_{\Gamma}$ is a~family of generalized standard stable tubes in $\Gamma_C$. Moreover, the tubes in $\cT_{\Gamma}$ are pairwise orthogonal.
Indeed, if $\Hom_C(U,V)\neq 0$ for two modules in different tubes, say ${\cT}_1$ containing $U$ and ${\cT}_2$ containing $V$, then
$\rad_C(M_1,M_2)=\Hom_C(M_1,M_2)\neq 0$ for some modules $M_1$ from the mouth of ${\cT}_1$ and $M_2$ from the mouth of ${\cT}_2$,
and this contradicts the assumption $\rad_A(M_{\sC}^{(t)},M_{\sC}^{(t)})=0$. Then it follows from Proposition \ref{prop-genst} that $\Gamma$
is a~generalized standard cyclic multicoil and $B$ is a~generalized multicoil enlargement of $C$ with respect to the family $\cT_{\Gamma}$.
Therefore, we conclude that $_c\sC^{\coh}$ consists of generalized standard components, which are cyclic generalized multicoils
(Proposition \ref{prop-apcoh}).
\item It follows from the arguments applied in the proofs of statements (iv) and (v) of Theorem \ref{thm2} that the translation quiver
$\sD_{\sC}^{(l)}\cup {_c\sC^{\coh}}  \cup \sD_{\sC}^{(r)}$ is a~cofinite subquiver of $\sC$.
Hence, if $N$ is the direct sum of all indecomposable modules of $\sC$ which do not belong to this translation subquiver, then
$\rad_A^l(N,N)=0$ for some positive integer $l$.
\item  Assume now that $\rad_A^{\infty}(X,Y)\neq 0$ for some indecomposable modules $X$ and $Y$ in $\sC$. It follows from \cite[Lemma 2.1(i)]{S7}
that there exist an infinite path
\[  X = X_0 \buildrel {f_1}\over {\hbox to 7mm{\rightarrowfill}} X_1 \buildrel {f_2}\over {\hbox to 7mm{\rightarrowfill}} \cdots \buildrel {f_{s-1}}\over {\hbox to 7mm{\rightarrowfill}}
X_{s-1} \buildrel {f_s}\over {\hbox to 7mm{\rightarrowfill}} X_s \buildrel {f_{s+1}}\over {\hbox to 7mm{\rightarrowfill}} \cdots \]
of irreducible homomorphisms between indecomposable modules in $\sC$ and homomorphisms $g_s\in\rad_A^{\infty}(X_s,Y)$, $s\geq 1$,
such that $g_sf_s\ldots f_1\neq 0$ for any $s\geq 1$.
Then it follows from (d) that there exists a~positive integer $m$ such that $X_s$ belongs to ${_c\sC^{\coh}} \cup \sD_{\sC}^{(r)}$ for any $s\geq m$.
Hence, we may assume that $X$ belongs to ${_c\sC^{\coh}} \cup \sD_{\sC}^{(r)}$.
Applying \cite[Lemma 2.1(ii)]{S7}, we conclude that there exist an infinite path
\[  \cdots \buildrel {h_{r+1}}\over {\hbox to 7mm{\rightarrowfill}} Y_r \buildrel {h_r}\over {\hbox to 7mm{\rightarrowfill}} Y_{r-1} \buildrel {h_{r-1}}\over {\hbox to 7mm{\rightarrowfill}}
\cdots \buildrel {h_{2}}\over {\hbox to 7mm{\rightarrowfill}} Y_{1} \buildrel {h_1}\over {\hbox to 7mm{\rightarrowfill}} Y_0 = Y \]
of irreducible homomorphisms between indecomposable modules in $\sC$ and homomorphisms $u_r\in\rad_A^{\infty}(X,Y_r)$, $r\geq 1$,
such that $h_1\ldots h_ru_r\neq 0$ for any $r\geq 1$.
Then it follows from (d) that there exists a~positive integer $n$ such that $Y_r$ belongs to $\sD_{\sC}^{(l)} \cup {_c\sC^{\coh}}$ for any $r\geq n$.
Hence, we may also assume that $Y$ belongs to $\sD_{\sC}^{(l)} \cup {_c\sC^{\coh}}$.
\item Assume that $X$ belongs to $\sD_{\sC}^{(r)}$ and $Y$ belongs to $\sD_{\sC}^{(l)}$.
Let $X$ belongs to $\sD_{\Gamma}^{(r)}$ and $Y$ belongs to $\sD_{\Omega}^{(l)}$ for some components $\Gamma$ of $_r\sC^{\ac}$ and $\Omega$ of $_l\sC^{\ac}$.
Then there are an epimorphism $(M_{\Gamma}^{(r)})^a\to X$ and a~monomorphism $Y\to (M_{\Omega}^{(l)})^b$ for some positive integers $a, b$,
and $\rad_A^{\infty}(X,Y)\neq 0$ implies that $\Hom_A(M_{\Gamma}^{(r)},M_{\Omega}^{(l)})\neq 0$. But this contradicts the assumption
$\Hom_A(M_{\sC}^{(r)},M_{\sC}^{(l)}) = 0$.
\item Assume that $X$ belongs to $\sD_{\sC}^{(r)}$ and $Y$ belongs to ${_c\sC^{\coh}}$.
Let $X$ belongs to $\sD_{\Gamma}^{(r)}$ for a~component $\Gamma$ of ${_r\sC^{\ac}}$ and $Y$ belongs to a~component $\Omega$ of ${_c\sC^{\coh}}$.
It follows from (f) that we may also assume that $X$ belongs to $\Delta_{\Gamma}^{(r)}$. We know from \cite[Corollary B]{MS1} that all but
finitely many modules in the generalized multicoil $\Omega$ have exactly two direct predecessors and two direct successors.
Hence we may assume that there is an infinite sectional path
\[  Y = Z_0 \buildrel {v_1}\over {\hbox to 7mm{\rightarrowfill}} Z_1 \buildrel {v_2}\over {\hbox to 7mm{\rightarrowfill}} \cdots \buildrel {v_{i-1}}\over {\hbox to 7mm{\rightarrowfill}}
Z_{i-1} \buildrel {v_i}\over {\hbox to 7mm{\rightarrowfill}} Z_i \buildrel {v_{i+1}}\over {\hbox to 7mm{\rightarrowfill}} \cdots \]
of irreducible homomorphisms between indecomposable modules in $\Omega$ such that, for every $i\geq 1$, there is a~mesh complete translation
subquiver of the form
\[
\xymatrix@C=20pt@R=16pt{
\cdots\ar[r] & U_{j+1}^{(i)}\ar[r]\ar[d] & U_{j}^{(i)}\ar[r]\ar[d] & \cdots\ar[r] & U_{1}^{(i)}\ar[r]\ar[d] & U_{0}^{(i)}=Z_{i-1}\ar[d] \cr
\cdots\ar[r] & V_{j+1}^{(i)}\ar[r] & V_{j}^{(i)}\ar[r] & \cdots\ar[r] & V_{1}^{(i)}\ar[r] & V_{0}^{(i)}=Z_{i} \cr
}
\]
Then it follows from \cite[Corollary 1.6]{L1} (see also \cite[Corollary IX.3.17]{SY4}) that the homomorphisms $v_i$, $i\geq 1$,
are of infinite left degree, and hence $v_i\ldots v_1f\neq 0$ for any nonzero homomorphism $f: X\to Y$ and $i\geq 1$.
It follows from the structure of the generalized multicoil (see \cite[Section 2]{MS1} and \cite[Section 3]{MS2}) that infinitely many modules
$Z_i$ belong to the tubular part $\cT_{\Omega}$ of  $\Omega$. Hence $\Hom_A(X,Z)\neq 0$ for a~module $Z$ in $\cT_{\Omega}$. But then
$\Hom_A(X,W)\neq 0$ for a~module $W$ lying on the mouth of a~stable tube of $\cT_{\Omega}$.
Therefore, we have $\Hom_A(M_{\Gamma}^{(r)},M_{\Omega}^{(t)}) \neq 0$, which contradicts the condition $\Hom_A(M_{\sC}^{(r)},M_{\sC}^{(t)}) = 0$.
\item Assume that $X$ belongs to ${_c\sC^{\coh}}$ and $Y$ belongs to $\sD_{\sC}^{(l)}$. Then, applying dual arguments to those in (g), we
conclude that $\Hom_A(M_{\sC}^{(t)},M_{\sC}^{(l)})\neq 0$, again a~contradiction.
\item Assume now that the both $X$ and $Y$ belong to ${_c\sC^{\coh}}$. Then, taking into account (c), we conclude that there are different components
$\Gamma$ and $\Omega$ of ${_c\sC^{\coh}}$ such that $X$ belongs to $\Gamma$ and $Y$ belongs to $\Omega$. Then we may choose $X$ and $Y$
such that there exist a~nonzero homomorphism $f: X\to Y$, an infinite sectional path
\[  \cdots \buildrel {p_{j+1}}\over {\hbox to 7mm{\rightarrowfill}} W_j \buildrel {p_j}\over {\hbox to 7mm{\rightarrowfill}} W_{j-1} \buildrel {p_{j-1}}\over {\hbox to 7mm{\rightarrowfill}}
\cdots \buildrel {p_{2}}\over {\hbox to 7mm{\rightarrowfill}} W_{1} \buildrel {p_1}\over {\hbox to 7mm{\rightarrowfill}} W_0 = X \]
of irreducible homomorphisms of infinite right degree between indecomposable modules in $\Gamma$, and infinite sectional path
\[  Y = Z_0 \buildrel {v_1}\over {\hbox to 7mm{\rightarrowfill}} Z_1 \buildrel {v_2}\over {\hbox to 7mm{\rightarrowfill}} \cdots \buildrel {v_{i-1}}\over {\hbox to 7mm{\rightarrowfill}}
Z_{i-1} \buildrel {v_i}\over {\hbox to 7mm{\rightarrowfill}} Z_i \buildrel {v_{i+1}}\over {\hbox to 7mm{\rightarrowfill}} \cdots \]
of irreducible homomorphisms of infinite left degree between indecomposable modules in $\Omega$.
Then we have $v_i\ldots v_1fp_1\ldots p_j\neq 0$ for any $i,j\geq 1$. Since $\Gamma$ and $\Omega$ are generalized multicoils, there are $j\geq 1$ with $W_j$ in $\cT_{\Gamma}$
and $i\geq 1$ with $Z_i$ in $\cT_{\Omega}$. Thus $\Hom_A(W,Z)\neq 0$ for some $W\in\cT_{\Gamma}$ and $Z\in\cT_{\Omega}$, and consequently
$\rad_A(M_{\Gamma}^{(t)},M_{\Omega}^{(t)})\neq 0$, contradicting the condition $\rad_A(M_{\sC}^{(t)},M_{\sC}^{(t)}) = 0$.
\end{enumerate}
Summing up, we proved that $\sC$ is generalized standard.

We present now two example showing that the vanishing conditions (ii) of Theorem \ref{thm1} are necessary for the generalized standardness
of an Auslander-Reiten component.

\begin{ex} \label{ex-1}
Let $K$ be an algebraically closed field, $Q$ the quiver
of the form
\[
\xymatrix@C=16pt@R=20pt{
1\ar[rd]_{\gamma}&&2\ar@<2pt>[ll]^{\beta}\ar@<-2pt>[ll]_{\alpha}\cr
&3\ar[ru]_{\sigma}\cr
}
\]
and $I$ the ideal in the path algebra $KQ$ of $Q$ over $K$ generated by the paths $\alpha\gamma$, $\gamma\sigma$, $\sigma\alpha$, and $A=KQ/I$.
Moreover, let $H$ be the path algebra $K\Delta$ of the subquiver $\Delta$ of $Q$ given by the vertices $1, 2$ and the arrows $\alpha, \beta$.
Then $A$ is a~$10$-dimensional $K$-algebra and $H$ is a~quotient algebra of $A$. Applying \cite[Section X.4]{SS1} and \cite[Section XVI.1]{SS2}
we conclude that the Auslander-Reiten quiver $\Gamma_A$ has a~disjoin union decomposition
\[ \Gamma_A = \sC\cup\cT\cup \left( \bigcup_{\lambda\in K}{\cT}_{\lambda}\right), \]
where
\begin{enumerate}
\renewcommand{\labelenumi}{\rm(\arabic{enumi})}
\item $\sC$ is the component of the form
\[ \hspace{-10mm}
\xymatrix@C=10pt@R=12pt{
{\phantom{\scr{S}}}\ar@{.}[r] & {\phantom{\scr{S}}}\ar[rd] && \scr{\tau_AI_3}\ar[rd] && {\phantom{\tau}}\scr{I_3}\ar[rd] &&&& {\phantom{\tau}}\scr{P_2}\ar[rd] && \scr{\tau_A^{-1}P_2}\ar[rd] && {\phantom{\scr{S}}}\ar@{.}[r] & {\phantom{\scr{S}}} \cr
&& \scr{\tau_A^{2}I_2}\ar[ru]\ar[rd] && \scr{\tau_AI_2}\ar[ru]\ar[rd] && {\phantom{\tau}}\scr{I_2}\ar[rd] && {\phantom{\tau}}\scr{P_1}\ar[ru]\ar[rd] && \scr{\tau_A^{-1}P_1}\ar[ru]\ar[rd] && \scr{\tau_A^{-2}P_1}\ar[ru]\ar[rd] \cr
{\phantom{\scr{S}}}\ar@{.}[r] & {\phantom{\scr{SS}}}\ar[rd]\ar[ru] && \scr{\tau_AS_2}\ar[rd]\ar[ru] && {\phantom{\tau}}\scr{S_2}\ar[ru] && {\phantom{\tau}}\scr{S_3}\ar[ru] && {\phantom{\tau}}\scr{S_1}\ar[rd]\ar[ru] && \scr{\tau_A^{-1}S_1}\ar[ru]\ar[rd] && {\phantom{\scr{SS}}}\ar@{.}[r] & {\phantom{\scr{S}}} \cr
&& \scr{\tau_AI_3}\ar[ru] && {\phantom{\tau}}\scr{I_3}\ar[ru] && && && {\phantom{\tau}}\scr{P_2}\ar[ru] && \scr{\tau_A^{-1}P_2}\ar[ru] \cr
}
 \]
containing all indecomposable postprojective modules and all indecomposable preinjective modules over the Kronecker algebra $H$;

\item $\cT$ is the quasi-tube of the form
\[
\xymatrix@C=10pt@R=18pt{
&&&\scr{P_3}\ar[rd]\cr
\scr{P_3/S_3}\ar[rd]\ar@{--}[dd]&&\scr{\rad P_3}\ar[rd]\ar[ru]&&\scr{P_3/S_3}\ar@{--}[dd]\cr
&\circ\ar[ru]\ar[rd]&&\scr{E_{\infty}}\ar[ru]\ar[rd]\cr
\circ\ar[ru]\ar[rd]\ar@{--}[dd]&&\circ\ar[ru]\ar[rd]&&\circ\ar@{--}[dd]\cr
&{\phantom{\circ}}\ar[ru]\ar@{.}[d]&&{\phantom{\circ}}\ar[ru]\ar@{.}[d]\cr
{\phantom{\circ}}&{\phantom{\circ}}&{\phantom{\circ}}&{\phantom{\circ}}&{\phantom{\circ}}\cr
}
 \]
obtained from the stable tube ${\cT}_{\infty}^H$ of $\Gamma_H$ of rank $1$, with the mouth module
\[
E_{\infty}: \xymatrix@C=16pt@R=20pt{
K&&K,\ar@<2pt>[ll]^{1}\ar@<-2pt>[ll]_{0}\cr
}
\]
by one ray insertion and one coray insertion, and the modules along the dashed lines have to be identified;
\item For each $\lambda\in K$, ${\cT}_{\lambda}={\cT}_{\lambda}^H$ is the stable tube of rank $1$ in $\Gamma_H$
with the mouth module
\[
E_{\lambda}: \xymatrix@C=16pt@R=20pt{
K&&K.\ar@<2pt>[ll]^{\lambda}\ar@<-2pt>[ll]_{1}\cr
}
\]
We observe that the stable tubes ${\cT}_{\lambda}$, $\lambda\in K$, are generalized standard in $\mo A$. On the other hand, for the quasi-tube $\cT$
we have
\[ M_{\cT}^{(t)} = E_{\infty}, \,\,\,\, P_{\cT} = P_3 = P_{\cT}^{\coh}, \,\,\,\, Q_{\cT} = P_3 = Q_{\cT}^{\coh}. \]
Further, $P_3$ is an indecomposable projective-injective module with $\tom(P_3)$ $= S_3 = \soc(P_3)$, and $S_3$ lies in the component
$\sC$. Hence,
\[ \Hom_A(\tom(P_{{\cT}}^{\coh}),Q_{\cT}^{\coh}) = \Hom_A(\tom(P_{{\cT}}^{\coh}),\soc(Q_{\cT}^{\coh})) = \Hom_A(S_3,S_3)\neq 0. \]
Clearly, we have $\rad_A^{\infty}(P_3,P_3) = \rad_A(P_3,P_3)\neq 0$, and so $\cT$ is not generalized standard.
We also note that $\Hom_A(P_{\cT},M_{\cT}^{(t)})=0$ and $\Hom_A(M_{\cT}^{(t)},Q_{\cT})=0$.
Consider now the acyclic component $\sC$. We may take $M_{\sC}^{(l)} = S_2\oplus I_2\oplus I_3$ and $M_{\sC}^{(r)} = S_1\oplus P_1\oplus P_2$.
We have
\[ \Hom_A(P_1,I_2) = \rad_A^{\infty}(P_1,I_2)\neq 0, \]
and hence $\Hom_A(M_{\sC}^{(r)},M_{\sC}^{(l)})\neq 0$ and $\sC$ is not generalized standard.
\end{enumerate}
\end{ex}
\begin{ex} \label{ex-2}
Let $K$ be an algebraically closed field, $a\in K\setminus\{0,1\}$, and $R$ be the locally bounded $K$-category given by the infinite quiver
\[
\xymatrix@C=26pt@R=8pt{
{\phantom{}}\ar@{.}[r] & {\phantom{}} & \overline{2n-3}\ar[l]\ar[ld] & 2n-1\ar[l]_{\eta_{n-1}}\ar[lddd]_{\mu_{n-1}\,\,\,\,\,\,}
& \overline{2n-1}\ar[l]_{\alpha_{n}}\ar[lddd]_{\sigma_{n}\,\,\,\,\,} & 2n+1\ar[l]_{\eta_{n}}\ar[lddd]_{\mu_{n}\,\,\,\,\,}
& \overline{2n+1}\ar[l]_{\alpha_{n+1}}\ar[lddd]_{\sigma_{n+1}\,\,\,\,\,\,} & {\phantom{}}\ar[l] & {\phantom{}}\ar@{.}[l] \cr
{\phantom{}}\ar@{.}[rd] & {\phantom{}} &&&&&& {\phantom{}}\ar[lu] & {\phantom{}}\ar@{.}[ld] \cr
{\phantom{}}\ar@{.}[ru] & {\phantom{}} &&&&&& {\phantom{}}\ar[ld] & {\phantom{}}\ar@{.}[lu] \cr
{\phantom{}}\ar@{.}[r] & {\phantom{}} & \overline{2n-2}\ar[l]\ar[lu] & 2n\ar[l]^{\xi_{n-1}}\ar[luuu]_{\,\,\,\,\,\,\rho_{n-1}}
& \overline{2n}\ar[l]^{\beta_{n}}\ar[luuu]_{\,\,\,\,\,\,\,\gamma_{n}} & 2n+2\ar[l]^{\xi_{n}}\ar[luuu]_{\,\,\,\,\,\,\rho_{n}}
& \overline{2n+2}\ar[l]^{\beta_{n+1}}\ar[luuu]_{\,\,\,\,\,\,\,\gamma_{n+1}} & {\phantom{}}\ar[l] & {\phantom{}}\ar@{.}[l] \cr
}
\]
and the relations $\eta_n\alpha_n=\rho_n\sigma_n$, $\xi_n\beta_n=a\mu_n\gamma_n$, $\alpha_{n+1}\eta_n=\gamma_{n+1}\mu_n$,
$\beta_{n+1}\xi_n=a\sigma_{n+1}\rho_n$, $\eta_n\alpha_n\eta_{n-1}=0$, $\alpha_{n+1}\eta_n\alpha_n=0$,
$\xi_n\beta_n\xi_{n-1}=0$, $\beta_{n+1}\xi_n\beta_n=0$,
$\rho_n\sigma_n\rho_{n-1}=0$, $\sigma_{n+1}\rho_n\sigma_n=0$,
$\mu_n\gamma_n\mu_{n-1}=0$, $\gamma_{n+1}\mu_n\gamma_n=0$,
$\eta_n\gamma_n\xi_{n-1}=0$, $\gamma_{n+1}\xi_n\sigma_n=0$,
$\xi_n\sigma_n\eta_{n-1}=0$, $\sigma_{n+1}\eta_n\gamma_n=0$,
for all $n\in{\Bbb Z}$. Moreover, let $G$ be the infinite cyclic group of automorphisms of $R$ generated by the shift $g$ given by
$g(2n-1)=2n+1$, $g(2n)=2n+2$, $g(\overline{2n-1})=\overline{2n+1}$, $g(\overline{2n})=\overline{2n+2}$, $g(\alpha_n)=\alpha_{n+1}$,
$g(\beta_n)=\beta_{n+1}$, $g(\gamma_n)=\gamma_{n+1}$, $g(\sigma_n)=\sigma_{n+1}$, $g(\eta_n)=\eta_{n+1}$, $g(\xi_n)=\xi_{n+1}$, $g(\rho_n)=\rho_{n+1}$,
for any $n\in{\Bbb Z}$. Consider now the orbit algebras $A=R/G$ and the push-down functor $F_{\lambda}: \mo R\to\mo A$ associated to the Galois
covering $F: R\to R/G=A$. Since the group $G$ is torsion-free, it follows from \cite{Ga} that $F_{\lambda}$ preserves indecomposable modules and almost split
sequences. The orbit algebra $A$ is given by the quiver
\[
\xymatrix@C=40pt@R=40pt{
1\ar@<5pt>[r]^{\eta}\ar@<3pt>[d]^{\rho}&\overline{1}\ar[l]^{\alpha}\ar@<3pt>[d]^{\gamma}\cr
\overline{2}\ar@<5pt>[r]^{\beta}\ar@<3pt>[u]^{\sigma}&2\ar[l]^{\xi}\ar@<3pt>[u]^{\mu}\cr
}
\]
and the relations $\eta\alpha=a\rho\sigma$, $\xi\beta=\mu\gamma$, $\alpha\eta=a\gamma\mu$, $\beta\xi=\sigma\rho$,
$\eta\alpha\eta=0$, $\alpha\eta\alpha=0$, $\xi\beta\xi=0$, $\beta\xi\beta=0$, $\rho\sigma\rho=0$, $\sigma\rho\sigma=0$, $\mu\gamma\mu=0$,
$\gamma\mu\gamma=0$, $\eta\gamma\xi=0$, $\gamma\xi\sigma=0$, $\xi\sigma\eta=0$, $\sigma\eta\gamma=0$.
We denote by $B$ the full subcategory of $R$ given by the convex subquiver
\[
\xymatrix@C=36pt@R=8pt{
 1 & \overline{1}\ar[l]_{\alpha_{1}}\ar[lddd]_{\sigma_{1}\,\,\,\,} & 3\ar[l]_{\eta_{1}}\ar[lddd]_{\mu_{1}\,\,\,} \cr
&& \cr
&& \cr
 2 & \overline{2}\ar[l]^{\beta_{1}}\ar[luuu]_{\,\,\,\,\gamma_{1}} & 4\ar[l]^{\xi_{1}}\ar[luuu]_{\,\,\,\,\,\rho_{1}} \cr
}
\]
Then $B$ is a~tubular algebra of tubular type $(2,2,2,2)$ in the sense of \cite{Ri1}. Then it follows from \cite[5.2]{Ri1} that $\Gamma_B$ admits
a~generalized standard ray tube $\sC$ and a~generalized standard coray tube $\sD$ of the forms
\[
\xymatrix@C=16pt@R=18pt{
&&P_3\ar[rd]&&X\ar@{--}[dd]\cr
&X\ar[ru]\ar[rd]\ar@{--}[ddd]&&\circ\ar[ru]\ar[rd]\cr
&&\circ\ar[ru]\ar[rd]&&\circ\ar@{--}[dd]\cr
&{\phantom{\circ}}\ar[ru]&&{\phantom{\circ}}\ar[ru]\ar@{.}[d]\cr
{\phantom{\circ}}&{\phantom{\circ}}&{\phantom{\circ}}&{\phantom{\circ}}&{\phantom{\circ}}\cr
&&\sC\cr
}
\hspace{30mm}
\xymatrix@C=16pt@R=18pt{
Y\ar[rd]\ar@{--}[dd]&&I_1\ar[rd]&&\cr
&\circ\ar[ru]\ar[rd]&&Y\ar@{--}[ddd]\cr
\circ\ar[ru]\ar[rd]\ar@{--}[dd]&&\circ\ar[ru]\ar[rd]&&\cr
&{\phantom{\circ}}\ar[ru]\ar@{.}[d]&&{\phantom{\circ}}\ar@{.}[d]\cr
{\phantom{\circ}}&{\phantom{\circ}}&{\phantom{\circ}}&{\phantom{\circ}}&{\phantom{\circ}}\cr
&&\sD\cr
}
 \]
where $X$ and $Y$ are the indecomposable $B$-modules
\[
\xymatrix@C=36pt@R=8pt{
& K & K\ar[l]_{1}\ar[lddd]_{1\,\,\,\,\;} & 0\ar[l]\ar[lddd] \cr
X:\hspace{-14mm}&&& \cr
&&& \cr
& K & K\ar[l]^{1}\ar[luuu]_{\;\,\,\,\,1} & 0\ar[l]\ar[luuu] \cr
}
\hspace{15mm}
\xymatrix@C=36pt@R=8pt{
& 0 & K\ar[l]\ar[lddd] & K\ar[l]_{1}\ar[lddd]_{1\,\,\,\,} \cr
Y:\hspace{-14mm}&&& \cr
&&& \cr
& 0 & K\ar[l]\ar[luuu] & K\ar[l]^{1}\ar[luuu]_{\,\,\,\,\,\,1} \cr
}
\]
In particular, we have $M_{\sC}^{(t)}=X$ and $M_{\sD}^{(t)}=Y$. Observe that $\Hom_B(P_{\sC},M_{\sC}^{(t)})$ $= \Hom_B(P_3,X)=0$
and $\Hom_B(M_{\sD}^{(t)},Q_{\sC}) = \Hom_B(Y,I_1)=0$.
Since $\sC$ and $\sD$ are components in $\Gamma_R$, we have the induced ray tube $\sC^*=F_{\lambda}(\sC)$, containing the indecomposable
projective module $F_{\lambda}(P_3)=F_{\lambda}(P_1)$, and coray tube $\sD^*=F_{\lambda}(\sD)$, containing the indecomposable
injective module $F_{\lambda}(I_1)=F_{\lambda}(I_3)$. Hence, we obtain
$P_{\sC^*} = P_{\sC^*}^{\coh} = F_{\lambda}(P_1)$ and $Q_{\sD^*} = Q_{\sD^*}^{\coh} = F_{\lambda}(I_3)$.
Moreover, $M_{\sC^*}^{(t)} = F_{\lambda}(X)$ and $M_{\sD^*}^{(t)} = F_{\lambda}(Y)$, with
$\rad_A(F_{\lambda}(X),F_{\lambda}(X))=\rad_A(X,X)=0$ and $\rad_A(F_{\lambda}(Y),F_{\lambda}(Y))=\rad_A(Y,Y)=0$.
We also note that there are nonzero homomorphisms $F_{\lambda}(P_1)\to F_{\lambda}(S_1)\to F_{\lambda}(X)$ and
$F_{\lambda}(Y)\to F_{\lambda}(S_1)\to F_{\lambda}(I_1)$ and the simple module $F_{\lambda}(S_1)$ is neither in $\sC^*$ nor in $\sD^*$.
Therefore, $\sC^*$, $\sD^*$ are not generalized standard components of $\Gamma_A$, and
$\Hom_A(P_{\sC^*},M_{\sC^*}^{(t)})\neq 0$, $\Hom_A(M_{\sD^*}^{(t)},Q_{\sD^*})\neq 0$.
\end{ex}

\section{Proof of Theorem \ref{thm7}} \label{sec5} 

The aim of this section is to prove Theorem \ref{thm7} and recall the relevant facts.

Following Happel, Reiten and Smal{\o} \cite{HRS} an algebra $B$ is called quasitilted if $B$ is the endomorphism algebra $\End_{\sH}(T)$ of a~tilting object $T$
in an~abelian hereditary category $\sH$. In the case when $\sH$ is equivalent (as a~triangulated category) to the derived category $D^b(\mo\Lambda)$ of the
module category $\mo\Lambda$ of a~canonical algebra $\Lambda$ (in the sense of Ringel \cite{Ri1, Ri2}), $B=\End_{\sH}(T)$ is called a~quasitilted algebra
of canonical type. It was shown in \cite{HRS} that an algebra $B$ is quasitilted if and only if $\gldim B\leq 2$ and every module $X$ in $\ind B$ satisfies
$\pd_BX\leq 1$ or $\id_BX\leq 1$. In \cite{HRe} Happel and Reiten proved that an algebra $A$ is quasitilted if and only if $A$ is tilted or quasitilted
of canonical type.

The prominent class of quasitilted algebras of canonical type is formed by concealed canonical algebras \cite{LM}, which are the postprojective tilts
of canonical algebras. In \cite{LP} Lenzing and de la Pe\~na proved that the concealed canonical algebras form the class of all algebras whose
Auslander-Reiten quiver admits a~separating family of stable tubes. Then we have the following characterization of arbitrary quasitilted algebras
of canonical type established by Lenzing and Skowro\'nski \cite{LS}.
\begin{prop}\label{prop-qtact}%
Let $A$ be an algebra. The following statements are equivalent:
\begin{enumerate}
\renewcommand{\labelenumi}{\rm(\roman{enumi})}
\item $A$ is a~quasitilted algebra of canonical type.
\item $\Gamma_A$ admits a~separating family of semiregular tubes (ray tubes and coray tubes).
\item $A$ is a~semiregular branch enlargement of a~concealed canonical algebra.
\end{enumerate}
\end{prop}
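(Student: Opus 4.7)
The plan is to establish the cycle of implications (iii) $\Rightarrow$ (ii) $\Rightarrow$ (i) $\Rightarrow$ (iii), using the Lenzing--de la Pe\~na theorem \cite{LP} as the anchor point for concealed canonical algebras and the Happel--Reiten characterization of quasitilted algebras as the bridge between the homological side (i) and the combinatorial side (ii).

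For (iii) $\Rightarrow$ (ii), I would start from a concealed canonical algebra $C$, whose Auslander--Reiten quiver $\Gamma_C$ admits a separating family $\cT^C$ of stable tubes by \cite{LP}. Writing $A$ as a semiregular branch enlargement of $C$, it is built by a sequence of admissible algebra operations of types (ad~1) on selected tubes of $\cT^C$ (producing ray tubes and postprojective-type components on the right) followed by operations of type (ad~1$^*$) (producing coray tubes and preinjective-type components on the left). The main step is to verify that each admissible operation transforms the separating family into a new separating family of semiregular tubes in $\Gamma_A$: concretely, one tracks how ray insertions and coray insertions act on the translation quiver and checks that properties (S1)--(S3) are preserved, using that the inserted branches remain on the respective sides of the separating family. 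This is essentially the combinatorics developed in the ray/coray tube framework of \cite{DER} adapted to the branch enlargement setting.

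For (ii) $\Rightarrow$ (i), assume $\Gamma_A$ has a separating family $\cT^A$ of ray and coray tubes. I would first show $\gldim A \leq 2$ and that each indecomposable module $X \in \ind A$ satisfies $\pd_A X \leq 1$ or $\id_A X \leq 1$. For a module in the postprojective part $\cP^A$, the separating property forces its injective envelope to be reachable via $\cT^A$, and a sectional-path analysis inside the ray tubes gives $\pd_A X \leq 1$; dually, modules in $\cQ^A$ satisfy $\id_A X \leq 1$; for modules inside $\cT^A$, one uses that semiregular tubes are generalized standard and that the mouth modules of the underlying stable skeleton are pairwise orthogonal bricks (Proposition \ref{prop-brick}) to control projective and injective resolutions. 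By Happel--Reiten \cite{HRe} these homological estimates make $A$ quasitilted, and since $\Gamma_A$ does not admit a faithful component with a section (because the separating family consists of tubes, not acyclic components with sections), $A$ cannot be tilted, hence must be quasitilted of canonical type.

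For (i) $\Rightarrow$ (iii), I would write $A = \End_{\sH}(T)$ for a tilting object $T$ in an abelian hereditary category $\sH$ equivalent to the category of coherent sheaves on a weighted projective line (equivalently derived equivalent to $\mo\Lambda$ for a canonical $\Lambda$). Decomposing $T = T^+ \oplus T^0 \oplus T^-$ into its postprojective, tubular and preinjective parts (with respect to the canonical split of $\sH$), I would identify the subalgebra $C = \End_{\sH}(T^0 \oplus T')$ controlled by the tubular and a generic pair of exceptional summands as a concealed canonical algebra, and interpret the summands of $T^+$ (respectively $T^-$) as prescribing admissible operations of type (ad~1$^*$) (respectively (ad~1)) applied at mouth modules of the separating tubes of $\Gamma_C$; this realizes $A$ as a semiregular branch enlargement of $C$.

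The main obstacle will be the implication (ii) $\Rightarrow$ (i): verifying the global homological bounds $\gldim A \leq 2$ and $\min(\pd_A X, \id_A X) \leq 1$ purely from the geometric shape of the separating family requires careful use of the Auslander--Reiten formulas combined with the fine structure of ray and coray tubes near their inserted branches, and in particular checking that no indecomposable in $\cT^A$ can simultaneously fail both homological bounds. The ruling-out of the tilted case on the basis that the separating family consists solely of tubes (rather than admitting a section) also needs an argument via \cite{L3, S1}.
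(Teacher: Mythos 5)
The paper gives no proof of this proposition; it is cited verbatim to Lenzing and Skowro\'nski \cite{LS}, so there is no argument in the paper to compare against. The original proof in \cite{LS} proceeds through tilting objects in the category of coherent sheaves on a weighted projective line (the Geigle--Lenzing picture), establishing the equivalence of (i) and (iii) on the derived level and then deriving (ii) from (iii) and back by contracting/inserting branches via the Lenzing--de la Pe\~na characterization of concealed canonical algebras.

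Your sketch, beyond being far looser than a proof, has a genuine logical error in (ii) $\Rightarrow$ (i). The Happel--Reiten dichotomy says a quasitilted algebra is tilted \emph{or} quasitilted of canonical type, but these two alternatives are not disjoint: tilted algebras of Euclidean type (e.g.\ tame concealed algebras) are simultaneously tilted and quasitilted of canonical type, and their Auslander--Reiten quivers do admit a separating family of stable tubes together with a postprojective connecting component and a preinjective connecting component. So the assertion ``$\Gamma_A$ does not admit a faithful component with a section because the separating family consists of tubes'' is false for these examples, and, more importantly, ruling out ``tilted'' is neither achievable nor sufficient to conclude ``of canonical type.'' To make (ii) $\Rightarrow$ (i) work you would instead have to show positively that the hereditary category $\sH$ realizing $A$ is derived equivalent to the module category of a canonical algebra, or better, prove (ii) $\Rightarrow$ (iii) directly: remove the projective and injective $\tau_A$-orbits from the ray and coray tubes, observe that the resulting stable tubes form a separating family for a suitable quotient algebra $C$, apply \cite{LP} to identify $C$ as concealed canonical, and then check that $A$ is a semiregular branch enlargement of $C$; (iii) $\Rightarrow$ (i) then follows from the weighted projective line realization. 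Also, the homological step in your (ii) $\Rightarrow$ (i) (deriving $\gldim A\leq 2$ and $\min(\pd_AX,\id_AX)\leq 1$ purely from a ``sectional path analysis'') is too vague to be verifiable as written; controlling $\Ext^2_A$ requires the Auslander--Reiten formula $\Ext^1_A(X,Y)\cong D\overline{\Hom}_A(Y,\tau_AX)$ and careful placement of injectives relative to $\tau_A$-orbits.
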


It follows that the quasitilted algebras of canonical type are very special generalized multicoil enlargements of concealed canonical algebras, invoking only
admissible operations of types (ad~1) and (ad~1$^*$), and applied to disjoint families of stable tubes.
We refer to \cite{HRS, LS, MS4, Ri1, Ri2} for basic representation theory of quasitilted algebras of canonical type. We note that the separating family
of semiregular tubes in the Auslander-Reiten quiver of a~quasitilted algebra of canonical type contains infinitely many stable tubes.

In the paper, by a~generalized multicoil algebra, we mean a~generalized multicoil enlargement $B$ of a~product $C$ of quasitilted algebras of canonical type
with respect to a~family of stable tubes of $\Gamma_C$. We have the following characterization of generalized multicoil algebras established
in \cite[Theorem A]{MS2}.
\begin{prop}\label{prop-gma}%
For an algebra $B$ the following statements are equivalent:
\begin{enumerate}
\renewcommand{\labelenumi}{\rm(\roman{enumi})}
\item $B$ is a~generalized multicoil algebra.
\item $\Gamma_B$ admits a~separating family of generalized multicoils.
\end{enumerate}
\end{prop}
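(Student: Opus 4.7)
The plan is to establish the equivalence by reducing to the structural results already developed in the excerpt, in particular Propositions~\ref{prop-qtact}, \ref{prop-genst} and \ref{prop-gme}.

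For (i) $\Rightarrow$ (ii), I would write $B$ as a generalized multicoil enlargement of $C = C_1 \times \cdots \times C_s$ with each $C_k$ quasitilted of canonical type, with respect to a finite family $\cT$ of pairwise orthogonal generalized standard stable tubes in $\Gamma_C$. By Proposition~\ref{prop-qtact}, each $\Gamma_{C_k}$ admits a separating family of semiregular tubes, so $\Gamma_C$ admits a separating family of semiregular tubes containing $\cT$. By Proposition~\ref{prop-gme}, $B$ is obtained from $C$ by a sequence of algebra operations of type (ad~1) followed by operations of types (ad~1$^*$)--(ad~5$^*$), each applied to a tube of $\cT$ or to a tube produced during the process. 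The corresponding translation-quiver operations transform the tubes in $\cT$ into generalized multicoils, while the remaining semiregular tubes outside $\cT$ remain essentially unchanged. The resulting family $\sC^B$ in $\Gamma_B$ is a sincere family of pairwise orthogonal generalized standard generalized multicoils by Proposition~\ref{prop-genst}. The key point is that each admissible operation preserves the separation properties (S1)--(S3); this is precisely what \cite[Sections 3 and 4]{MS2} establish by tracking morphisms through branch extensions and coextensions, and through the rectangular insertions of types (ad~2)--(ad~5) and their duals.

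For (ii) $\Rightarrow$ (i), let $\sC^B$ be the given separating family of generalized multicoils and $\Gamma_B = \cP^B \cup \sC^B \cup \cQ^B$ the associated decomposition. For each $\Gamma \in \sC^B$ I would consider the tubular part $\cT_\Gamma$ and the faithful algebras $B(\Gamma)$ and $B(\cT_\Gamma)$. By Proposition~\ref{prop-genst}, $\cT_\Gamma$ is a faithful family of pairwise orthogonal generalized standard stable tubes in $\Gamma_{B(\cT_\Gamma)}$, and $B(\Gamma)$ is a generalized multicoil enlargement of $B(\cT_\Gamma)$ with respect to $\cT_\Gamma$. Letting $\cT^B = \bigcup_{\Gamma \in \sC^B} \cT_\Gamma$, I would take $C$ to be the quotient of $B$ by the annihilator of $\cT^B$. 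Transferring the separation property from $\sC^B$ to $\cT^B$ (morphisms from $\cP^B$ to $\cQ^B$ factor through $\add(\sC^B)$, and by the shape of a generalized multicoil they actually factor through the tubular cores), one shows that $\cT^B$ is a separating family of semiregular tubes in $\Gamma_C$. It then splits according to the connected factors of $C$ into a product $C = C_1 \times \cdots \times C_s$ with each $\Gamma_{C_k}$ admitting a separating family of semiregular tubes, so Proposition~\ref{prop-qtact} guarantees that each $C_k$ is quasitilted of canonical type. Finally, reversing the admissible operations used in building $B(\Gamma)$ from $B(\cT_\Gamma)$ for each $\Gamma \in \sC^B$ exhibits $B$ as a generalized multicoil enlargement of $C$ with respect to the subfamily of stable tubes inside $\cT^B$.

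The main obstacle is the delicate propagation of the separation axioms through admissible operations in both directions. In the direction (ii) $\Rightarrow$ (i) one must show that extracting the tubular cores $\cT_\Gamma$ from the generalized multicoils $\Gamma$ preserves the vanishing of $\Hom$ from $\cQ^B$ to $\cP^B$ and the factoring property (S3), and at the same time yields, after appropriate quotienting, tubes that are genuinely \emph{stable} inside $\Gamma_C$ rather than merely semiregular. In the direction (i) $\Rightarrow$ (ii) the analogous issue is that the rectangular insertions of types (ad~2)--(ad~5) and (ad~2$^*$)--(ad~5$^*$) modify the modules on the border of the component and could in principle create new morphisms between $\cP^B$ and $\cQ^B$ not factoring through the multicoil; ruling this out demands the explicit homomorphism computations performed in \cite[Sections 3 and 4]{MS2}.
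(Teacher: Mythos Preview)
The paper does not give an independent proof of this proposition: it is simply quoted as \cite[Theorem~A]{MS2}. So there is no argument in the paper to compare your sketch against; the proposition functions here purely as an imported result.

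Your outline is a plausible high-level reconstruction of how the argument in \cite{MS2} runs, and you correctly identify that the substantive work (propagating the separation axioms through admissible operations, and the explicit $\Hom$ computations for the rectangular insertions of types (ad~2)--(ad~5) and their duals) lives entirely in \cite[Sections~3 and~4]{MS2}. Since you ultimately defer those steps to the same reference, your proposal is effectively the same citation dressed up with a sketch.

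One technical point in your (ii)~$\Rightarrow$~(i) direction is worth flagging. You propose to take $C$ to be the faithful algebra of $\cT^B = \bigcup_\Gamma \cT_\Gamma$ and then argue that $\cT^B$ becomes a separating family of \emph{semiregular} tubes in $\Gamma_C$, so that Proposition~\ref{prop-qtact} applies. But the tubular parts $\cT_\Gamma$ consist of \emph{stable} tubes, and the relevant conclusion is rather that $C$ is a product of \emph{concealed} canonical algebras (via the Lenzing--de la Pe\~na characterization, or via the external-short-paths criterion as in Proposition~\ref{prop-cru}), which is of course a special case of quasitilted of canonical type. Moreover, ``transferring the separation property from $\sC^B$ to $\cT^B$'' is not automatic: one must identify $\cP^C$ and $\cQ^C$ inside $\Gamma_C$ and check (S1)--(S3) there, not merely note that morphisms between $\cP^B$ and $\cQ^B$ factor through the tubular cores. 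This is exactly the delicate step handled in \cite{MS2}, and your sketch does not supply an independent mechanism for it.
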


The following consequences of \cite[Theorem C and Corollary D]{MS2} describe the structure of the Auslander-Reiten quiver of a~generalized multicoil algebra.
\begin{prop}\label{prop-gma-str}%
Let $B$ be a~generalized multicoil algebra, $\sC^B$ a~separating family of generalized multicoils in $\Gamma_B$, and $\Gamma_B={\cP}^B\cup{\sC}^B\cup{\cQ}^B$
the associated decomposition of $\Gamma_B$. Then the following statements hold.
\begin{enumerate}
\renewcommand{\labelenumi}{\rm(\roman{enumi})}
\item There is a~unique quotient algebra $B^{(l)}$ of $B$ which is a~quasitilted algebra of canonical type having a~separating family ${\cT}^{B^{(l)}}$ of coray tubes
 in $\Gamma_{B^{(l)}}$ such that $\Gamma_{B^{(l)}}={\cP}^{B^{(l)}}\cup{\cT}^{B^{(l)}}\cup{\cQ}^{B^{(l)}}$ with ${\cP}^{B^{(l)}}={\cP}^{B}$.
\item There is a~unique quotient algebra $B^{(r)}$ of $B$ which is a~quasitilted algebra of canonical type having a~separating family ${\cT}^{B^{(r)}}$ of ray tubes
 in $\Gamma_{B^{(r)}}$ such that $\Gamma_{B^{(r)}}={\cP}^{B^{(r)}}\cup{\cT}^{B^{(r)}}\cup{\cQ}^{B^{(r)}}$ with ${\cQ}^{B^{(r)}}={\cQ}^{B}$.
\item Every component in ${\cP}^{B}={\cP}^{B^{(l)}}$ is either a~postprojective component, a~ray tube, or obtained from a~component of the form
 ${\Bbb Z}{\Bbb A}_{\infty}$ by a~finite number (possibly zero) of ray insertions.
\item Every component in ${\cQ}^{B}={\cQ}^{B^{(r)}}$ is either a~preinjective component, a~coray tube, or obtained from a~component of the form
 ${\Bbb Z}{\Bbb A}_{\infty}$ by a~finite number (possibly zero) of coray insertions.
\end{enumerate}
\end{prop}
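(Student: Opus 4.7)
The plan is to leverage Propositions \ref{prop-gma} and \ref{prop-gme}, which allow us to present $B$ as a generalized multicoil enlargement of a product $C$ of quasitilted algebras of canonical type with the admissible operations sorted into a preferred order. By Proposition \ref{prop-gma}, $B$ is such an enlargement with respect to a faithful family $\cT$ of pairwise orthogonal generalized standard stable tubes in $\Gamma_C$. Invoking Proposition \ref{prop-gme}(ii), I re-order the admissible operations so that $B$ is obtained from $C$ by first performing a sequence of operations of type (ad~1$^*$), producing a tubular coextension of $C$, and then a sequence of operations of types (ad~1)--(ad~5). I take $B^{(l)}$ to be the intermediate algebra after the first phase. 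Dually, using Proposition \ref{prop-gme}(i), I let $B^{(r)}$ be the intermediate algebra obtained from $C$ by first applying all (ad~1)-type operations (creating a tubular extension), before the subsequent (ad~1$^*$)--(ad~5$^*$) operations that yield $B$.

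For statement (i), the algebra $B^{(l)}$ is a tubular coextension of $C$, hence a semiregular branch enlargement of a product of concealed canonical algebras on the coextension side, so by Proposition \ref{prop-qtact} it is a quasitilted algebra of canonical type whose separating family $\cT^{B^{(l)}}$ consists of coray tubes. The crucial point to verify is the identification $\cP^{B^{(l)}} = \cP^B$. I expect this to follow by tracking the effect of each admissible operation of types (ad~1)--(ad~5) on $\Gamma_{B^{(l)}}$: such an operation modifies a coray tube in $\cT^{B^{(l)}}$ (inserting rays and attaching branches) and creates new components on the $\cQ$-side, while leaving each component in $\cP^{B^{(l)}}$ intact and producing no nonzero homomorphisms from the new branches into $\cP^{B^{(l)}}$. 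Uniqueness of $B^{(l)}$ then follows because $\cP^B$ and $\cT^{B^{(l)}}$ together determine the ideal $\ann_B(\cP^B\cup\cT^{B^{(l)}})$. Statement (ii) is established dually, exchanging the roles of (ad~1) and (ad~1$^*$).

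Once (i) and (ii) are in place, statements (iii) and (iv) reduce to the description of components on the postprojective (respectively, preinjective) side of a quasitilted algebra of canonical type. Using the structure of semiregular branch enlargements of concealed canonical algebras from Proposition \ref{prop-qtact} and \cite{LS, Ri1, Ri2}, every component of $\cP^{B^{(l)}}=\cP^B$ is either a postprojective component of the underlying concealed canonical algebra, a ray tube arising from a stable tube of the concealed canonical algebra via (ad~1)-type operations on the postprojective side of $B^{(l)}$, or a translation quiver obtained from a component of the form ${\Bbb Z}{\Bbb A}_{\infty}$ by finitely many ray insertions. Statement (iv) follows dually, working with $\cQ^{B^{(r)}}=\cQ^B$ and coray insertions.

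The principal obstacle I anticipate is the bookkeeping in the identification $\cP^{B^{(l)}} = \cP^B$: one must control precisely which modules and which irreducible homomorphisms are added, relocated, or preserved by each admissible operation of types (ad~1)--(ad~5), and verify the vanishing of morphisms from the new components (which belong to $\cQ^B$) back into $\cP^{B^{(l)}}$. This ultimately rests on the local character of the admissible algebra and translation-quiver operations described in \cite[Section 2]{MS1} and \cite[Sections 3 and 4]{MS2}; once that locality is exploited carefully, the remaining assertions follow from the quasitilted canonical-type theory.
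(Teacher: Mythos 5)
The paper does not prove this proposition; it simply cites it as a consequence of Theorem~C and Corollary~D of \cite{MS2}. Your proposal is therefore an attempt at a genuine reconstruction, which is welcome, but it contains a real gap in the construction of $B^{(l)}$ (and dually $B^{(r)}$).

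The issue is your choice of base algebra when invoking Proposition~\ref{prop-gme}. You take $C$ to be a product of quasitilted algebras of canonical type (the base in the definition of a generalized multicoil algebra) and define $B^{(l)}$ as the intermediate algebra obtained from $C$ after performing all operations of type (ad~$1^*$) in the sorted chain from $C$ to $B$. But $C$ itself may already possess ray tubes in its separating family (a quasitilted algebra of canonical type is a \emph{semiregular} branch enlargement of a concealed canonical algebra, with both extension and coextension branches allowed, applied to disjoint tubes). Those ray tubes survive unchanged into your intermediate algebra, so the separating family of that intermediate algebra is a mixture of ray and coray tubes, not the family of coray tubes required by statement~(i). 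The extreme case makes this transparent: if $B=C$ is itself a quasitilted algebra of canonical type with at least one ray tube, your construction yields $B^{(l)}=C$, which does not have a separating family of coray tubes, whereas the $B^{(l)}$ of the proposition is the proper quotient obtained by deleting the extension branches. Your claim ``hence a semiregular branch enlargement of a product of concealed canonical algebras on the coextension side'' is exactly what needs proving and is false for your $B^{(l)}$ in general.

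The fix is to unfold the branch enlargement structure of $C$ first: pass from $C$ to the underlying product $C_0$ of concealed canonical algebras (the faithful algebra of the stable tubes, as in Proposition~\ref{prop-cru}), and observe that $B$ is a generalized multicoil enlargement of $C_0$ with respect to a faithful family of stable tubes of $\Gamma_{C_0}$ (the chain from $C_0$ to $C$ uses only (ad~1) and (ad~$1^*$), and is then continued by the chain from $C$ to $B$). Applying Proposition~\ref{prop-gme}(ii) to this longer chain, sorted so that all (ad~$1^*$) operations come first, produces an intermediate algebra that is a tubular coextension of $C_0$ — hence genuinely a quasitilted algebra of canonical type with a separating family of coray tubes — and this is the correct $B^{(l)}$. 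Note also that your uniqueness argument is circular as written: the ideal $\ann_B(\cP^B\cup\cT^{B^{(l)}})$ is defined in terms of $\cT^{B^{(l)}}$, which depends on the candidate $B^{(l)}$; one needs an a~priori description of the modules generating $B^{(l)}$ inside $\ind B$ (for instance, $\cP^B$ together with the $C_0$-modules and coextension branch modules in $\sC^B$) before that argument closes. The remaining steps of your outline — the identification $\cP^{B^{(l)}}=\cP^B$ by local analysis of the (ad~1)--(ad~5) operations, and the deduction of (iii)--(iv) from Propositions~\ref{prop-qtact}, \ref{prop-tilted-1}, \ref{prop-tilted-2} — are plausible and go in the right direction, but they are sketched rather than established, which you yourself acknowledge.
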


Then $B^{(l)}$ is called the \emph{left quasitilted algebra} of $B$ and $B^{(r)}$ the \emph{right quasitilted algebra} of $B$.

The following results from \cite[Theorem E]{MS2} describes homological properties of generalized multicoil algebras.

\begin{prop}\label{prop-hom}%
Let $B$ be a~generalized multicoil algebra, $\sC^B$ a~separating family of generalized multicoils in $\Gamma_B$, and $\Gamma_B={\cP}^B\cup{\sC}^B\cup{\cQ}^B$
the associated decomposition of $\Gamma_B$. Then the following statements hold.
\begin{enumerate}
\renewcommand{\labelenumi}{\rm(\roman{enumi})}
\item $\pd_BX\leq 1$ for any module $X$ in ${\cP}^B$.
\item $\id_BX\leq 1$ for any module $X$ in ${\cQ}^B$.
\item $\pd_BX\leq 2$ and $\id_BX\leq 2$ for any module $X$ in ${\sC}^B$.
\item $\gldim B\leq 3$.
\end{enumerate}
\end{prop}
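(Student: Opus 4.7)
The plan is to reduce everything to the quasitilted quotient algebras $B^{(l)}$ and $B^{(r)}$ supplied by Proposition \ref{prop-gma-str}, and then to track how the admissible algebra operations of Proposition \ref{prop-gme} affect projective resolutions and injective coresolutions. Recall (Happel--Reiten--Smal{\o}) that every quasitilted algebra has global dimension at most two, and that in the separating decomposition of a~quasitilted algebra of canonical type the modules lying in the predecessor family satisfy $\pd \leq 1$ while those in the successor family satisfy $\id \leq 1$. Applying this to $B^{(l)}$ and $B^{(r)}$, and invoking the identifications $\cP^B = \cP^{B^{(l)}}$ and $\cQ^B = \cQ^{B^{(r)}}$ furnished by Proposition \ref{prop-gma-str}, statements (i) and (ii) reduce to showing that $\pd_B X = \pd_{B^{(l)}} X$ for $X \in \cP^{B^{(l)}}$ and, dually, $\id_B Y = \id_{B^{(r)}} Y$ for $Y \in \cQ^{B^{(r)}}$.

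To verify these equalities I would invoke Proposition \ref{prop-gme}: $B$ is obtained from $B^{(l)}$ by a~sequence of admissible algebra operations of types (ad~1)--(ad~5), each of which glues a~branch algebra along a~prescribed acyclic module in a~coray tube of the current algebra. The crucial observation is that the new simple $B$-modules introduced at each step of this sequence lie in $\cT^{B^{(l)}} \cup \cQ^{B^{(l)}}$ (never in $\cP^{B^{(l)}}$), and that for every new indecomposable projective $P_v$ of $B$ one has $\Hom_B(P_v, X) = 0$ whenever $X \in \cP^{B^{(l)}}$. Consequently the minimal projective $B^{(l)}$-resolution of such an $X$ lifts unchanged to a~projective resolution over $B$, and the bound $\pd_B X \leq 1$ is inherited from $B^{(l)}$. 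The dual argument using $B^{(r)}$ and operations of types (ad~1$^*$)--(ad~5$^*$) yields (ii).

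For (iii), let $X \in \sC^B$. Using the fact that $\sC^B$ is the cyclic part of a~generalized multicoil obtained from the tubular family of $C$ by admissible translation quiver operations, I would show that the indecomposable direct summands of the first syzygy $\Omega_B X$ all lie in $\add(\cP^B)$: combinatorially this amounts to showing that the projective cover $P \to X$ has radical whose indecomposable summands are postprojective predecessors of $X$, and iteration of this observation along the chain of admissible operations forces the radical into $\cP^B$. Then part (i) gives $\pd_B \Omega_B X \leq 1$, and hence $\pd_B X \leq 2$; the dual argument produces $\id_B X \leq 2$. Finally, (iv) follows by combining (i)--(iii): the vanishing conditions they supply force $\Ext^3_B(X,Y) = 0$ except possibly when $X \in \cQ^B$ and $Y \in \cP^B$, and this remaining case is controlled by applying the dual of the argument of the second paragraph to bound $\pd_B X \leq 1 + \gldim B^{(r)} \leq 3$ for $X \in \cQ^B$.

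The hard part will be the bookkeeping in the second and third paragraphs: verifying that the minimal $B^{(l)}$-resolution of a~module in $\cP^{B^{(l)}}$ lifts unchanged to $B$, and that the syzygy of every module in $\sC^B$ decomposes into summands in $\add(\cP^B)$. Both require the explicit description of the admissible algebra and translation quiver operations (ad~1)--(ad~5) and (ad~1$^*$)--(ad~5$^*$) from \cite{MS1, MS2}; in particular, one must use that each operation attaches a~branch whose support is disjoint from the vertices underlying $\cP^{B^{(l)}}$, so that no new relations or composition factors are introduced within that support.
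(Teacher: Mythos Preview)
The paper does not prove this proposition; it is quoted verbatim from \cite[Theorem~E]{MS2}, where the argument is carried out by induction on the sequence of admissible algebra operations. So there is nothing to compare your outline against here, but let me comment on its internal correctness.

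Your treatment of (i) and (ii) is on the right track: passing to $B^{(l)}$ and $B^{(r)}$ and checking that the relevant minimal resolutions survive the extension is indeed how one argues, and the point that the operations (ad~1)--(ad~5) leave projectives at old vertices unchanged is the key observation.

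However, your plan for (iii) contains a genuine error. The claim that every indecomposable summand of $\Omega_B X$ lies in $\add(\cP^B)$ for $X\in\sC^B$ is false. Take the simplest case: $B=C[M]$ a one-point extension of a concealed canonical algebra $C$ by a mouth module $M$ of a stable tube, and let $X=S_*$ be the new simple module. Then $S_*$ lies in the resulting ray tube, hence in $\sC^B$, and $\Omega_B S_* = \rad P_* = M$, which also lies in $\sC^B$, not in $\cP^B$. The correct argument cannot proceed by locating the syzygy in $\cP^B$; one must instead track, operation by operation, how the projective and injective dimensions of the newly created modules are bounded in terms of those of the pivot modules, and this is exactly what the proof in \cite{MS2} does.

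Your derivation of (iv) also has a gap. From (i)--(iii) alone one cannot conclude $\gldim B\le 3$: the case $X\in\cQ^B$, $Y\in\cP^B$ genuinely remains, and your proposed bound $\pd_B X \le 1 + \gldim B^{(r)}$ is unjustified. The passage from $B^{(r)}$ to $B$ uses operations of type (ad~$1^*$)--(ad~$5^*$), which preserve injective resolutions, not projective ones, so a projective $B^{(r)}$-resolution of $X$ does not lift to $B$. What one actually needs, and what the inductive argument in \cite{MS2} supplies, is a direct bound $\pd_B X\le 3$ for $X\in\cQ^B$ (equivalently $\id_B Y\le 3$ for $Y\in\cP^B$), obtained again by analysing each admissible operation.
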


The following proposition is a~crucial ingredient for the proofs of Theorems \ref{thm7} and \ref{thm4}.
\begin{prop}\label{prop-cru}%
Let $A$ be an algebra, $\sC=(\sC_i)_{i\in I}$ a~separating family of components in $\Gamma_A$, and assume that the cyclic part $_c\sC$ of $\sC$ is infinite.
Moreover, let $\Gamma$ be a~component of the coherent part $_c\sC^{\coh}$ of $_c\sC$. Then the following statements hold.
\begin{enumerate}
\renewcommand{\labelenumi}{\rm(\roman{enumi})}
\item The faithful algebra $C=B(\cT_{\Gamma})$ of the tubular part $\cT_{\Gamma}$ of $\Gamma$ is a~product of concealed canonical algebras, and
$\cT_{\Gamma}$ is a~finite family of stable tubes in $\Gamma_C$.
\item The faithful algebra $B=B(\Gamma)$ of $\Gamma$ is a~generalized multicoil enlargement of $C$ with respect to $\cT_{\Gamma}$, and hence
a~generalized multicoil algebra.
\item Every stable tube of $\Gamma_C$ which is not in the tubular part $\cT_{\Omega}$ of a~component $\Omega$ of $_c\sC^{\coh}$ belongs to the
family $\sC$.
\end{enumerate}
In particular, the separating family $\sC$ contains infinitely many stable tubes, and hence $I$ is infinite.
\end{prop}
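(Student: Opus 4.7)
The plan is to deduce (ii) and most of (i) directly from Proposition~\ref{prop-genst}, complete (i) using the Lenzing--de la Pe\~na characterization of concealed canonical algebras, then obtain (iii) by an almost-split-sequence comparison between $\mo A$ and $\mo C$, and finally combine (i) and (iii) to force $I$ infinite.

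First, since $\sC$ is separating, every component $\sC_i$ is generalized standard by (S1). The component of $\sC$ containing $\Gamma$ is therefore generalized standard, and so is its full translation subquiver $\Gamma$. Applying Proposition~\ref{prop-genst} to $\Gamma$ yields immediately that $\cT_{\Gamma}$ is a finite faithful family of pairwise orthogonal generalized standard stable tubes in $\Gamma_C$, and that $B=B(\Gamma)$ is a generalized multicoil enlargement of $C$ with respect to $\cT_{\Gamma}$. This gives (ii) and all of (i) except the claim that $C$ is a product of concealed canonical algebras.

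To finish (i), I would invoke Lenzing--de la Pe\~na: $C$ is a product of concealed canonical algebras iff each connected factor $C_k$ admits a separating family of stable tubes in $\Gamma_{C_k}$. The family $\cT_{\Gamma}$, together with any further stable tubes of $\Gamma_C$ belonging to the factor $C_k$, should be the required separating family. Sincerity and pairwise orthogonality are already delivered by Proposition~\ref{prop-genst}; the nontrivial point is (S3). For this one pulls back to $\mo A$: a homomorphism between $C$-modules lying respectively outside and inside the candidate family is also an $A$-homomorphism, so (S3) for $\sC^A$ factors it through $\add(\sC^A)$, and because the only modules of $\sC^A$ surviving the quotient $A\to C$ are those annihilated by $\ann_A(\cT_{\Gamma})$---which by faithfulness of $\cT_{\Gamma}$ forces them into the additive closure of the stable tubes of $\Gamma_C$---the factorization descends to $\mo C_k$. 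A parallel argument shows that the complementary components of $\Gamma_{C_k}$ split into a ``left'' part and a ``right'' part coming from $\cP^A$ and $\cQ^A$.

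For (iii), given a stable tube $\cT$ of $\Gamma_C$ not lying in any $\cT_{\Omega}$, its mouth modules are pairwise orthogonal bricks in $\mo C$ by Proposition~\ref{prop-brick} and hence in $\mo A$. I would next show that $\cT$ is actually a stable tube component of $\Gamma_A$: for each mouth module $M$, the almost split sequence in $\mo C$ ending at $M$ coincides with the one in $\mo A$, because $\tau_A M = \tau_C M$ follows from comparing minimal projective presentations in $\mo A$ and $\mo C$, using faithfulness of $\cT_{\Gamma}$ and the fact that $M$ is not in the tubular part of any $\cT_{\Omega}$. Once $\cT$ is recognized as a component of $\Gamma_A$, the three separation axioms place it inside $\sC^A$ rather than $\cP^A$ or $\cQ^A$. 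For the concluding assertion: by (i) each factor $\Gamma_{C_k}$ hosts a separating family of stable tubes containing infinitely many tubes, while each $\cT_{\Omega}$ is finite by Proposition~\ref{prop-genst}. Either there are only finitely many components $\Omega$ contributing tubes to $\Gamma_C$, in which case infinitely many stable tubes of $\Gamma_C$ escape all $\cT_{\Omega}$'s and (iii) places them in $\sC$; or there are infinitely many such $\Omega$, each by construction giving at least one component of $\sC$. Either way $I$ is infinite.

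The principal obstacle will be the descent of the separation structure from $\Gamma_A$ to $\Gamma_{C_k}$ and the almost split sequence comparison for (iii). The difficulty is that the inclusion $\mo C\hookrightarrow\mo A$ is exact and fully faithful but typically fails to preserve almost split sequences, so a priori a $\Gamma_C$-component can be stitched from pieces of several $\Gamma_A$-components. The technical crux is to verify $\tau_A=\tau_C$ on mouth modules of the stable tubes under consideration, which reduces to comparing minimal projective presentations in $\mo A$ and $\mo C$ and exploiting the admissible-operations structure of the enlargement from $C$ to $B$ recalled in Proposition~\ref{prop-gme}.
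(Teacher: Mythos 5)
Your use of Proposition~\ref{prop-genst} for (ii) and for the first half of (i) matches the paper. For the two remaining steps, however, you take a different route from the paper, and in both places your route has an unresolved gap that you yourself flag.

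To show $C$ is a product of concealed canonical algebras, the paper does not attempt to exhibit a separating family in $\Gamma_C$ and verify (S1)--(S3). It invokes the criterion of Reiten and Skowro\'nski from \cite[Theorem 3.1]{RS1} (see also \cite[Proposition 3.2]{JMS}): a faithful family of pairwise orthogonal generalized standard stable tubes in $\Gamma_C$ without external short paths in $\ind C$ forces $C$ to be a product of concealed canonical algebras. An external short path $X\to Y\to Z$ is then ruled out by showing that $Y$ must lie in the component $\sC_i$ containing $\Gamma$ but outside $\Gamma$ itself, so that any finite path of irreducible maps from $X$ to $Y$ (which exists by generalized standardness) must cross $\tau_A^{-1}\Delta_\Gamma^{(r)}$, whose modules have no simple composition factors from $\mo C$, killing the composite. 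Your (S3)-descent argument, by contrast, rests on the unsupported claim that faithfulness of $\cT_\Gamma$ places the factoring module $W\in\add(\sC^A)$ (or its largest $C$-submodule) into the additive closure of the stable tubes of $\Gamma_C$: faithfulness only says $\ann_C(\cT_\Gamma)=0$, and the structural fact you would actually need here is precisely the assertion about largest $C$-submodules of modules in a generalized multicoil that the paper deploys later, in the proof of (iii), so your plan for (i) is effectively circular.

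For (iii), your plan to prove $\tau_A M=\tau_C M$ on mouth modules and recognize $\cT$ as a component of $\Gamma_A$ is doubtful as stated: the minimal projective presentation of a $C$-module over $A$ need not consist of $C$-modules, so there is no a priori reason for $\tau_A$ and $\tau_C$ to agree, and the admissible-operations structure of the enlargement from $C$ to $B(\Gamma)$ gives no grip on the $A$-projective covers of modules in a stable tube of $\Gamma_C$ disjoint from $\cT_\Gamma$. The paper's argument is entirely different: using the strong separation property of $\cT^C$ it first shows $\cP^C\subseteq\cP^A$ and $\cQ^C\subseteq\cQ^A$; it then assumes some tube $\cT^C_\sigma$ lies outside every $\cT_\Omega$, hence entirely in $\cP^A$ (or dually $\cQ^A$), factors the inclusion $U\hookrightarrow E$ of a module $U\in\cT^C_\sigma$ into its injective envelope $E$ in $\mo C$ (which lies in $\cQ^A$) through $\add(\sC^A)$ by (S3), observes that the image of the resulting nonzero map $U\to T$ with $T\in\sC^A$ lies in the largest $C$-submodule of $T$, which by the generalized multicoil structure sits in a tube $\cT^C_\xi$ already contained in some $\cT_\Omega$, and finally uses pairwise orthogonality of $\cT^C_\sigma$ and $\cT^C_\xi$ to force the map to vanish. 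This contradiction shows that every stable tube of $\Gamma_C$ lies in some $\cT_\Omega$; since each $\cT_\Omega$ is finite and $\Gamma_C$ has infinitely many tubes, this is what actually drives the concluding count of components in $\sC$.
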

\begin{proof}
Let $\Gamma_A={\cP}\cup{\sC}\cup{\cQ}$ be the decomposition of $\Gamma_A$ induced by the separating family $\sC$. Let $\sC_i$ be a~component in $\sC$
containing $\Gamma$, that is, $\Gamma$ is a~component of $_c\sC^{\coh}_{i}$.
We note that $\sC_i$ is a~generalized standard, hence almost periodic, component of $\Gamma_A$. It follows from Proposition \ref{prop-genst} that
$\cT_{\Gamma}$ is a~faithful family of pairwise orthogonal generalized standard stable tubes in $\Gamma_C$, and $B$ is a~generalized multicoil enlargement
of $C$ with respect $\cT_{\Gamma}$.
We claim that $C$ is a~product of concealed canonical algebras, and consequently $B$ is a~generalized multicoil algebra.
By \cite[Theorem 3.1]{RS1} (see also \cite[Proposition 3.2]{JMS}) it is enough to show that the family $\cT_{\Gamma}$ has no external short paths in $\ind C$.
Suppose that $\cT_{\Gamma}$ admits an external short path in $\ind C$, that is, a~sequence $X\to Y\to Z$ of nonzero nonisomorphisms in $\ind C$ such that
$X$ and $Z$ belong to $\cT_{\Gamma}$ but $Y$ is not in $\cT_{\Gamma}$. Because $X$ and $Z$ belong to the separating family $\sC$, the module $Y$ must belong
to $\sC$. Moreover, $\sC$ is generalized standard, and then there is a~finite path of irreducible homomorphisms
\[  X = X_0 \buildrel {f_1}\over {\hbox to 7mm{\rightarrowfill}} X_1 \buildrel {f_2}\over {\hbox to 7mm{\rightarrowfill}} \cdots \buildrel {f_{m-1}}\over {\hbox to 7mm{\rightarrowfill}}
X_{m-1} \buildrel {f_m}\over {\hbox to 7mm{\rightarrowfill}} X_m = Y  \]
in $\mo A$ with $f_m\ldots f_1\neq 0$, and hence $Y$ belongs to the component $\sC_i$. We observe also that $Y$ does not belong to the cyclic
generalized multicoil $\Gamma$, because $B=B(\Gamma)$ is a~generalized multicoil enlargement of $C$  and $Y$ is a~$C$-module not lying in $\cT_{\Gamma}$.
Then we conclude that there is $s\in\{1,\ldots, m\}$ such that $X_s$ belongs to $\tau_A^{-1}\Delta_{\Gamma}^{(r)}$. But it follows from the structure of
a~generalized multicoil that all modules  in $\tau_A^{-1}\Delta_{\Gamma}^{(r)}$ have no simple composition factors from $\mo C$, and then $f_s\ldots f_1=0$.
This proves the claim.

We shall prove now the statement (iii). Without loss of generality, we may assume that $C$ is indecomposable.
Let $\cT^C=(\cT_{\lambda}^C)_{\lambda\in\Lambda}$ be the separating family of stable tubes in $\Gamma_C$ and
\[ \Gamma_C={\cP}^C\cup{\cT}^C\cup{\cQ}^C \]
the associated decomposition of $\Gamma_C$. It is known (see \cite{LP}) that $\cT^C$ has the strongly separation property: every homomorphism
from $\cP^C$ to $\cQ^C$ factors through the additive category $\add(\cT^C_{\lambda})$ for any $\lambda\in\Lambda$. Let $\cT_{\lambda}$ be a~stable tube
of the tubular family $\cT_{\Gamma}$ of a~component $\Gamma$ of $_c\sC^{\coh}$. For any indecomposable module $X$ in $\cP^C$ there exists a~nonzero
homomorphism $f: X\to Y$ with $Y$ in $\add(\cT_{\lambda})$, and hence $\rad_A^{\infty}(X,Y)=\rad_C^{\infty}(X,Y)\neq 0$.
Similarly, for any indecomposable module $Z$ in $\cQ^C$ there is a~nonzero homomorphism $g: V\to Z$ with $V$ in $\add(\cT_{\lambda})$,
and hence $\rad_A^{\infty}(V,Z)=\rad_C^{\infty}(V,Z)\neq 0$. This shows that all indecomposable modules from $\cP^C$ are contained in $\cP$ and
all indecomposable modules from $\cQ^C$ are contained in $\cQ$. Let $\Lambda'$ be the set of all $\xi\in\Lambda$ such that $\cT^C_{\xi}$ occurs in
the tubular family $\cT_{\Omega}$ of a~component $\Omega$ of $_c\sC^{\coh}$. We also note that if a~stable tube $\cT^C_{\mu}$ has an indecomposable module lying in
$\sC$, then all indecomposable modules of $\cT^C_{\mu}$ lie in $\sC$, because $\Hom_{\Lambda}(\sC,\cP)=0$ and $\Hom_{\Lambda}(\cQ,\cP)=0$, and hence
$\mu\in\Lambda'$. Hence the indecomposable modules of the tubes $\cT^C_{\xi}$, $\xi\in\Lambda'$, are indecomposable $C$-modules lying in $\sC$.
Suppose now that $\Lambda\neq \Lambda'$ and $\sigma\in\Lambda\setminus\Lambda'$. Since $\Hom_{\Lambda}(\cQ,\cP)=0$ and any two modules of $\cT^C_{\sigma}$
lie on a~common cycle in $\ind C$, we infer that either all modules of $\cT^C_{\sigma}$ lie in $\cP$ or all modules of $\cT^C_{\sigma}$ lie in $\cQ$.
We may assume that $\cT^C_{\sigma}$ consists of modules from $\cP$.
Take now a~module $U$ in $\cT^C_{\sigma}$ and a~nonzero homomorphism $h: U\to E$, where $E$ is an indecomposable injective $C$-module.
Since $E$ lies in $\cQ^C$, it lies also in $\cQ$. Then $h$ factors through a~module $W$ in $\add(\sC)$, and so there is a~nonzero homomorphism $p: U\to T$
with $T$ an indecomposable module in $\sC$. Then $p(U)$ is contained in the largest $C$-submodule of $T$.
But it follows from the structure of indecomposable modules in a~generalized multicoil of a~generalized multicoil algebra (see \cite[Section 3]{MS2}),
and the above discussion, that the largest $C$-submodules of indecomposable modules in $\sC$ are the indecomposable modules from
$\cT_{\xi}$, $\xi\in\Lambda'$. Then $p=0$, because different tubes in $\cT^C$ are orthogonal. Summing up, we have  $\Lambda=\Lambda'$ and the
statement (iii) holds.
\end{proof}

We need also the following facts on tilted algebras.
\begin{prop}\label{prop-tilted-1}%
Let $H$ be an indecomposable hereditary algebra, $T$ a~tilting module in $\mo H$ without indecomposable preinjective direct summands,
$B=\End_H(T)$ the associated tilted algebra, and $\sY\Gamma_B$ the full translation subquiver of $\,\Gamma_B$ formed by all components contained
entirely in the torsion part $\sY(T)=\{Y\in\mo B\, |\, \Tor_1^B(Y,T)=0\}$.
Then the following statements hold.
\begin{enumerate}
\renewcommand{\labelenumi}{\rm(\roman{enumi})}
\item If $H$ is of Euclidean type, then $\sY\Gamma_B$ consists of a~postprojective component $\cP_B$ and an infinite family of ray tubes,
and the faithful algebra $B(\cP_B)$ of $\cP_B$ is a~tame concealed algebra.
\item If $H$ is of wild type, then $\sY\Gamma_B$ consists of a~postprojective component $\cP_B$ and an infinite family of components obtained
from components of the form ${\Bbb Z}{\Bbb A}_{\infty}$ by a~finite number (possibly zero) of ray insertions,
and the faithful algebra $B(\cP_B)$ of $\cP_B$ is a~wild concealed algebra.
\end{enumerate}
\end{prop}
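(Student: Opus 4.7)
The plan is to transport the component structure of $\Gamma_H$ to $\Gamma_B$ via the Brenner--Butler tilting theorem, using the hypothesis on $T$ to pin down which components of $\Gamma_B$ lie entirely in $\sY(T)$. Recall that $\Hom_H(T,-)$ induces an equivalence $\cT(T) \to \sY(T)$, where $\cT(T) = \{X \in \mo H : \Ext^1_H(T,X) = 0\}$ and $\mathcal{F}(T) = \{X \in \mo H : \Hom_H(T,X) = 0\}$, and that over the hereditary algebra $H$ the torsion pair $(\cT(T),\mathcal{F}(T))$ is splitting: every indecomposable of $\mo H$ lies in exactly one of the two classes.

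First I would establish three key facts about the distribution of $\ind H$ between $\cT(T)$ and $\mathcal{F}(T)$. (a) Every indecomposable preinjective $H$-module $I$ lies in $\cT(T)$: the Auslander--Reiten formula gives $\Ext^1_H(T,I) \cong D\Hom_H(\tau^{-1}I,T)$, and $\tau^{-1}I$ is either zero or again preinjective; but over a connected hereditary algebra of infinite representation type there are no nonzero homomorphisms from preinjective to non-preinjective indecomposables, so the absence of preinjective summands in $T$ forces this $\Hom$-group to vanish. (b) The set $\mathcal{F}(T) \cap \cP(H)$ is finite, consisting of those postprojective indecomposables $X$ with $\Hom_H(T_i,X)=0$ for every postprojective summand $T_i$ of $T$. (c) Writing $T = T^{(p)} \oplus T^{(r)}$ with $T^{(p)}$ postprojective and $T^{(r)}$ regular, the condition $\Ext^1_H(T,T)=0$ forces the summands of $T^{(r)}$ inside any single regular component of $\Gamma_H$ to lie on a~finite disjoint union of sectional paths.

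With these facts in hand, I would identify $\sY\Gamma_B$. The image under $\Hom_H(T,-)$ of the cofinite subquiver $\cT(T) \cap \cP(H)$ of $\Gamma_H$ is a~full translation subquiver of $\Gamma_B$ which, by a~standard argument using (b) and the equivalence, is a~postprojective component $\cP_B$ of $\Gamma_B$ containing the indecomposable projective $B$-modules $\Hom_H(T,T^{(p)}_i)$ attached to the postprojective summands of $T$. A~direct inspection of the Auslander--Reiten structure of $\cP_B$ then identifies the faithful algebra $B(\cP_B)$ as a~concealed algebra of the same representation type as~$H$, tame concealed in case~(i) and wild concealed in case~(ii). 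Each regular component of $\Gamma_H$ meeting no summand of $T^{(r)}$ lies entirely in $\cT(T)$, and its image under $\Hom_H(T,-)$ is a~stable tube (case~(i)) or a~component of shape ${\Bbb Z}{\Bbb A}_\infty$ (case~(ii)) sitting in $\sY(T)$; there are infinitely many such components. For a~regular component of $\Gamma_H$ meeting summands of $T^{(r)}$, fact~(c) together with a~translation-quiver calculation parallel to the analysis of admissible operations of type (ad~1) in \cite{Ri1} and \cite{MS2} shows that its image is a~ray tube (case~(i)) or a~component of shape ${\Bbb Z}{\Bbb A}_\infty$ with finitely many ray insertions (case~(ii)). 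Finally, the connecting component of $\Gamma_B$ is excluded from $\sY\Gamma_B$ because the finitely many modules of $\mathcal{F}(T)\cap\cP(H)$ from~(b) have their $\Ext^1_H(T,-)$-images within this connecting component and lying in $\sX(T)$.

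The main obstacle is the identification of $B(\cP_B)$ as a~concealed algebra of the correct representation type; this requires a~careful analysis of $\ann_B(\cP_B)$ together with the reconstruction of a~suitable postprojective tilting module whose endomorphism algebra realises $B(\cP_B)$, controlling the interplay between annihilator ideals on the two sides of the Brenner--Butler equivalence. The combinatorial identification of the remaining components of $\sY\Gamma_B$ is technically intricate but reduces systematically to the admissible operation formalism already developed in \cite{MS1, MS2}.
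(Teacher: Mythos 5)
The paper's own proof consists of a bare citation to the literature on tilted algebras of Euclidean and wild type, and your sketch is indeed an outline of the Brenner--Butler argument that those references carry out, so the framework is the right one. However, your outline contains two genuine gaps in the wild case~(ii), both of which are precisely what the cited papers of Kerner, Ringel, and Strauss have to work to overcome.

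First, your fact~(b) --- that $\mathcal{F}(T)\cap\cP(H)$ is finite --- is equivalent to $T$ having at least one postprojective direct summand. For $H$ of Euclidean type this is automatic (a defect argument shows a tilting module cannot be purely regular), but for a wild hereditary algebra $H$ with at least three simple modules there exist purely regular tilting modules (Ringel, Baer); for such $T$ every postprojective $H$-module lies in $\mathcal{F}(T)$, fact~(b) fails, $\cT(T)\cap\cP(H)$ is empty, and your construction of $\cP_B$ as the image of $\cT(T)\cap\cP(H)$ produces nothing. This is exactly the regime that the reference \cite{Ri1-2} addresses. Second, your claim that ``each regular component of $\Gamma_H$ meeting no summand of $T^{(r)}$ lies entirely in $\cT(T)$'' is false for wild $H$: distinct regular components of a wild hereditary algebra are not $\Hom$-orthogonal (for quasi-simple $X,Y$ in different regular components one has $\Hom_H(X,\tau^{-n}Y)\neq 0$ for $n\gg 0$), so $\Ext^1_H(T^{(r)},X)\cong D\overline{\Hom}_H(X,\tau T^{(r)})$ can be nonzero for $X$ in a component disjoint from $T$. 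Consequently, regular components of $\Gamma_H$ do not transport component-by-component to $\mathbb{Z}\mathbb{A}_\infty$-components of $\Gamma_B$ under $\Hom_H(T,-)$; the identification of the regular-type components of $\sY\Gamma_B$ for wild $H$ requires the finer analysis of \cite{Ke, Ke2, Ke3} and cannot be obtained by the naive image-of-a-component argument. For Euclidean $H$, where the tubes are pairwise orthogonal, your sketch does go through.
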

\begin{proof}
The statement (i) follows from the theory of tilted algebras of Euclidean type developed in \cite{CB0-1, DR2, Ri-001, Ri1}.
The statement (ii) follows from the representation theory of tilted algebras of wild type developed in \cite{Ke, Ke2, Ke3, Ri-001, Ri-002, Ri1-2, St}.
\end{proof}

We have also the dual proposition.
\begin{prop}\label{prop-tilted-2}%
Let $H$ be an indecomposable hereditary algebra, $T$ a~tilting module in $\mo H$ without indecomposable postprojective direct summands,
$B=\End_H(T)$ the associated tilted algebra, and $\sX\Gamma_B$ the full translation subquiver of $\,\Gamma_B$ formed by all components contained
entirely in the torsion-free part $\sX(T)=\{X\in\mo B\, |\, X\otimes_BT)=0\}$.
Then the following statements hold.
\begin{enumerate}
\renewcommand{\labelenumi}{\rm(\roman{enumi})}
\item If $H$ is of Euclidean type, then $\sX\Gamma_B$ consists of a~preinjective component $\cQ_B$ and an infinite family of coray tubes,
and the faithful algebra $B(\cQ_B)$ of $\cQ_B$ is a~tame concealed algebra.
\item If $H$ is of wild type, then $\sX\Gamma_B$ consists of a~preinjective component $\cQ_B$ and an infinite family of components obtained
from components of the form ${\Bbb Z}{\Bbb A}_{\infty}$ by a~finite number (possibly zero) of coray insertions,
and the faithful algebra $B(\cQ_B)$ of $\cQ_B$ is a~wild concealed algebra.
\end{enumerate}
\end{prop}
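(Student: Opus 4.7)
The plan is to reduce Proposition \ref{prop-tilted-2} to Proposition \ref{prop-tilted-1} via the standard $K$-duality $D$. First I would pass to the opposite algebra: the formula $B^{\op}=\End_{H^{\op}}(DT)$ exhibits $B^{\op}$ as a tilted algebra, where $H^{\op}$ is again an indecomposable hereditary algebra (of Euclidean, respectively wild, type precisely when $H$ is), and $DT$ is a tilting module in $\mo H^{\op}$. Since $D$ interchanges the postprojective component ${\cP}(H)$ with the preinjective component ${\cQ}(H^{\op})$ of the corresponding Auslander--Reiten quivers, the hypothesis that $T$ has no indecomposable postprojective direct summand in $\mo H$ translates into the hypothesis that $DT$ has no indecomposable preinjective direct summand in $\mo H^{\op}$.

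Next I would transport the torsion pair. The standard duality induces an isomorphism of translation quivers $D\colon \Gamma_B\to \Gamma_{B^{\op}}^{\op}$, and under the tilting theorem it exchanges the torsion-free class $\sX(T)\subseteq\mo B$ with the torsion class $\sY(DT)\subseteq\mo B^{\op}$ associated with $DT$. Consequently $\sX\Gamma_B$ corresponds bijectively with $\sY\Gamma_{B^{\op}}$, preinjective components of $\Gamma_B$ correspond to postprojective components of $\Gamma_{B^{\op}}$, coray tubes correspond to ray tubes, and components obtained from ${\Bbb Z}{\Bbb A}_{\infty}$ by a finite number of coray insertions correspond to components obtained from ${\Bbb Z}{\Bbb A}_{\infty}$ by the corresponding number of ray insertions.

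Finally I would apply Proposition \ref{prop-tilted-1} to the tilted algebra $B^{\op}=\End_{H^{\op}}(DT)$. In the Euclidean case it yields that $\sY\Gamma_{B^{\op}}$ consists of a postprojective component $\cP_{B^{\op}}$ and an infinite family of ray tubes, with faithful algebra $B(\cP_{B^{\op}})$ tame concealed; in the wild case it yields the analogous assertion with wild concealed faithful algebra. Pulling back along $D$ produces the claimed decomposition of $\sX\Gamma_B$ in both cases, where $\cQ_B$ is the image of $\cP_{B^{\op}}$. The identity $B(\cQ_B)\cong B(\cP_{B^{\op}})^{\op}$ together with the fact that the classes of tame concealed and wild concealed algebras are closed under taking opposites then gives the stated property of $B(\cQ_B)$.

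The only point that requires a moment's care, rather than an actual obstacle, is checking that the tilting-module structure is genuinely preserved under $D$ and that it exchanges postprojective and preinjective indecomposables over $H$ and $H^{\op}$; both of these are routine consequences of the fact that $D$ exchanges projectives with injectives over a hereditary algebra and preserves $\Ext^1$ up to the isomorphism $\Ext_H^1(X,Y)\cong D\Ext_{H^{\op}}^1(DY,DX)$. With these standard verifications, the proposition follows from Proposition \ref{prop-tilted-1} by pure duality.
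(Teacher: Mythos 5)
Your proposal is correct and matches the paper's intent: the paper gives no separate argument for Proposition~\ref{prop-tilted-2}, merely noting it is ``the dual proposition'' to Proposition~\ref{prop-tilted-1}, and your proof simply makes that duality reduction explicit via $B^{\op}=\End_{H^{\op}}(DT)$ and the exchange of $\sX(T)$ with $\sY(DT)$, preinjective with postprojective components, and coray with ray insertions.
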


We are now able to complete the proof of Theorem \ref{thm7}.

Let $A$ be an algebra with a~separating family $\sC^A$ of components in $\Gamma_A$, and $\Gamma_A = \cP^A\cup\sC^A\cup\cQ^A$ the associated decomposition
of $\Gamma_A$. Let $\sC^A=(\sC_i)_{i\in I}$. Applying Theorem \ref{thm4} we may assume that $I$ is infinite. Since every regular generalized standard
component is a~stable tube, the family $\sC^A$ has a~disjoint decomposition
\[ \sC^A = \sC\cup\cT, \]
where $\sC$ is the finite family of all nonregular components in $\sC^A$ and $\cT$ consists of all stable tubes in $\sC^A$.
Consider the faithful algebra $C=B(\cT)$ of $\cT$. Then $\cT$ is a~faithful family of pairwise orthogonal generalized standard stable tubes in $\Gamma_C$.
Moreover, because $\cT$ is a~part of the separating family $\sC^A$ of $\Gamma_A$ and $C$ is a~quotient algebra of $A$, we conclude that $\cT$ is without
external short paths in $\ind C$, and hence $C$ is a~product $C_1\times\ldots\times C_m$ of concealed canonical algebras. Let $J$ be the finite subset
of $I$ such that $\sC=(\sC_i)_{i\in J}$. For each $i\in J$, we take the left tilted quotient algebra $A_{\sC_i}^{(lt)}$ and
the right tilted quotient algebra $A_{\sC_i}^{(rt)}$ of $A$ satisfying the statements (i) and (ii) of Theorem \ref{thm2} for the component $\sC_i$.
Then we define $A^{(lt)}$ as the product of all algebras $A_{\sC_i}^{(lt)}$, $i\in J$, and $A^{(rt)}$ as the product of all algebras $A_{\sC_i}^{(rt)}$, $i\in J$.
Further, for each $i\in J$, we choose the quotient algebra $A_{\sC_i}^{(c)}$ of $A$ satisfying the statement (iii) of Theorem \ref{thm2},
and denote by $A^{(c)}$ the product of these algebras $A_{\sC_i}^{(c)}$, $i\in J$.
Consider also the disjoint union $\cT_{\sC}$ of the tubular parts $\cT_{\Gamma}$ of all components $\Gamma$ of $_c\sC^{\coh}$. Then it follows from Proposition
\ref{prop-cru} that the faithful algebra $B(\cT_{\sC})$ of $\cT_{\sC}$ coincides with $C$ and $A^{(c)}$ is a~multicoil enlargement of $C$ using
the family of stable tubes $\cT_{\sC}$ of $\Gamma_C$. We note that since $\cT_{\sC}$ is the disjoint union,
$\cT_{\sC}\cup\cT$ is a~separating family of stable tubes in $\Gamma_C$.
We denote by $A^{(lc)}$ the left quasitilted algebra $(A^{(c)})^{(l)}$ of $A^{(c)}$ and by $A^{(rc)}$ the right quasitilted algebra $(A^{(c)})^{(r)}$ of $A^{(c)}$.
Finally, we set
\[ A^{(l)} = A^{(lt)}\times A^{(lc)} \,\,\,\,\text{and}\,\,\,\,  A^{(r)} = A^{(rt)}\times A^{(rc)}. \]
We note that $A^{(l)}$ and $A^{(r)}$ are quotient algebras of $A$.
It follows now from Theorem \ref{thm2} and Propositions \ref{prop-gma-str}, \ref{prop-tilted-1}, \ref{prop-tilted-2} that $A^{(l)}$ and $A^{(r)}$ are the algebras
satisfying the statements (i)-(iv) of Theorem \ref{thm7}.

\section{Proof of Theorem \ref{thm4}} \label{sec6} 

The aim of this section is to prove Theorem \ref{thm4} and recall the relevant facts.

Let $A$ be an algebra and $\sC$ a~component of $\Gamma_A$. Then $\sC$ is said to be almost acyclic if the cyclic part $_c\sC$ of $\sC$ is finite.
I was shown in \cite[Theorem 2.5]{RS3} that $\sC$ is almost acyclic if and only if $\sC$ admits a~multisection $\Delta$, which is a~full connected
valued subquiver of $\sC$ satisfying the following conditions:
\begin{enumerate}
\renewcommand{\labelenumi}{\rm(\arabic{enumi})}
\item $\Delta$ is almost acyclic.
\item $\Delta$ is convex in $\sC$.
\item For each $\tau_A$-orbit $\sO$ in $\sC$, we have $1 \leq | \Delta \cap {\sO} | < \infty$.
\item $| \Delta \cap {\sO} | = 1$ for all but finitely many $\tau_A$-orbits $\sO$ in $\sC$.
\item No proper full convex valued subquiver of $\Delta$ satisfies the conditions \rm{(1)}--\rm{(4)}.
\end{enumerate}
To a~multisection $\Delta$ of $\sC$ one associates the left border $\Delta_l$, the right border $\Delta_r$, and the core $\Delta_c$ being
a~finite subquiver of $\Delta$ containing the cyclic part $_c\sC$ of $\sC$ (see \cite[Section 3]{RS3} for definitions and properties of these quivers).
We also note that if $\sC$ is additionally almost periodic, then every multisection of $\sC$ is finite.

Following \cite{RS3}, an algebra $B$ is said to be a~\emph{generalized double tilted algebra} if the following conditions are satisfied:
\begin{enumerate}
\renewcommand{\labelenumi}{\rm(\arabic{enumi})}
\item $\Gamma_B$ admits a component $\sC$ with a~faithful multisection $\Delta$.
\item There exists a~tilted quotient algebra $B^{(l)}$ of $B$ (not necessarily indecomposable) such that the left border $\Delta_l$ of $\Delta$
is a~disjoint union of sections
of the connecting components of the indecomposable parts of $B^{(l)}$, and the category of all predecessors of $\Delta_l$ in $\ind B$ coincides with
the category of all predecessors of $\Delta_l$ in $\ind B^{(l)}$.
\item There exists a~tilted quotient algebra $B^{(r)}$ of $B$ (not necessarily indecomposable) such that the right border $\Delta_r$ of $\Delta$
is a~disjoint union of sections
of the connecting components of the indecomposable parts of $B^{(r)}$, and the category of all successors of $\Delta_r$ in $\ind B$ coincides with
the category of all successors of $\Delta_r$ in $\ind B^{(r)}$.
\end{enumerate}
Then $B^{(l)}$ is called the \emph{left tilted algebra} and $B^{(r)}$ the \emph{right tilted algebra} of $B$.

The following characterization of generalized double tilted algebras from \cite[Theorem 3.1]{RS3} is essential for the proof of Theorem \ref{thm4}.
\begin{prop}\label{prop-gdta}
Let $B$ be an algebra. Then the following statements are equivalent:
\begin{enumerate}
\renewcommand{\labelenumi}{\rm(\roman{enumi})}
\item $B$ is a generalized double tilted algebra.
\item $\Gamma_B$ admits a~faithful generalized standard almost acyclic component.
\end{enumerate}
\end{prop}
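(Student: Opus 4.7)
The plan is to establish the cyclic chain (ii)$\Rightarrow$(i), (i)$\Rightarrow$(iii), (iii)$\Rightarrow$(ii). As noted, the equivalence (ii)$\Leftrightarrow$(iii) is Proposition \ref{prop-gdta}; for (iii)$\Rightarrow$(ii) one only needs to observe that the faithful generalized standard almost acyclic component furnished by a generalized double tilted algebra is automatically separating, since the decomposition of $\Gamma_A$ into predecessors and successors of the left and right borders of its multisection is built into the definition via the left and right tilted quotients $B^{(l)}$ and $B^{(r)}$. The implication (ii)$\Rightarrow$(i) is immediate, as a single separating component constitutes a separating family of size one.

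The substantive content is (i)$\Rightarrow$(iii). Fix a finite separating family $\sC^A=(\sC_i)_{i\in I}$ with the associated decomposition $\Gamma_A=\cP^A\cup\sC^A\cup\cQ^A$. The first step is to show that every $\sC_i$ is almost acyclic. If instead some component $\Gamma$ of the coherent cyclic part of $\sC^A$ were nonempty, then by Proposition \ref{prop-cru}(i),(iii) the tubular part $\cT_\Gamma$ would have a concealed canonical faithful algebra $C=B(\cT_\Gamma)$, and all of the (infinitely many) stable tubes of $\Gamma_C$ would be forced to lie in $\sC^A$, contradicting finiteness of $I$. Hence each $\sC_i$ has finite cyclic part.

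The second step is to extract a single faithful member $\sC_{i_0}\in\sC^A$, after which Proposition \ref{prop-gdta} will deliver (iii). Because $A$ is basic indecomposable, its Gabriel quiver is connected and the bimodule $DA$ is nonzero, so some $\Hom_A(P,I)\neq 0$ for an indecomposable projective $P$ and injective $I$. Combining the orthogonality (S1) of distinct components of $\sC^A$ with the vanishings $\Hom_A(\cQ^A,\cP^A)=\Hom_A(\cQ^A,\sC^A)=\Hom_A(\sC^A,\cP^A)=0$ in (S2), I argue that the indecomposable projectives (respectively injectives) of $A$ which happen to lie in $\sC^A$ all lie in a single common component $\sC_{i_0}$, and that the existence of a nonzero projective-to-injective morphism identifies the projective-receiving and injective-receiving components. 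Using the factoring property (S3) to route morphisms from projectives in $\cP^A$ through $\add(\sC_{i_0})$ to injectives in $\cQ^A$, together with sincerity of $\sC^A$, one concludes that $\ann_A(\sC_{i_0})=0$; thus $\sC_{i_0}$ is a faithful generalized standard almost acyclic component of $\Gamma_A$, and Proposition \ref{prop-gdta} gives that $A$ is generalized double tilted.

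The principal obstacle is the second step: converting the separation axioms (S1)--(S3) plus indecomposability of $A$ into faithfulness of a single member of the family. The orthogonality (S1) is the key lever that prevents indecomposable projectives from being split between two different $\sC_i$'s, but handling indecomposable projectives (or injectives) that sit in $\cP^A\cup\cQ^A$ rather than in $\sC^A$ itself requires delicate use of (S3) and of sincerity to ensure that the annihilator of the chosen $\sC_{i_0}$ really vanishes. As a byproduct, the argument shows that any finite separating family actually consists of exactly one component, a conclusion which, combined with Proposition \ref{prop-cru}, underlies Corollary \ref{thm5}.
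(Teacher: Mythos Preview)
Your proposal does not address Proposition~\ref{prop-gdta} at all. That proposition has exactly two equivalent conditions, (i) $B$ is a generalized double tilted algebra and (ii) $\Gamma_B$ admits a faithful generalized standard almost acyclic component. Your argument instead treats a statement with \emph{three} conditions, explicitly invokes Proposition~\ref{prop-gdta} as an already-known tool (``the equivalence (ii)$\Leftrightarrow$(iii) is Proposition~\ref{prop-gdta}''), and works with a finite separating family $\sC^A=(\sC_i)_{i\in I}$. What you have written is a proof sketch for Theorem~\ref{thm4}, not for Proposition~\ref{prop-gdta}. The paper does not give its own proof of Proposition~\ref{prop-gdta}; it is quoted as \cite[Theorem~3.1]{RS3} and used as a black box, so there is no in-paper argument to compare against.

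If one reads your text as an attempted proof of Theorem~\ref{thm4}, your first step (using Proposition~\ref{prop-cru} to force each $\sC_i$ to be almost acyclic) matches the paper's Proposition~\ref{prop-62}. Your second step diverges: you try to locate a single faithful component $\sC_{i_0}$ by tracking where indecomposable projectives and injectives land, whereas the paper assumes $m\ge 2$ and constructs an explicit triangular-matrix product decomposition $A\cong B(\sC_1)\times\cdots\times B(\sC_m)$, contradicting indecomposability. Your route has a genuine gap: the claim that all indecomposable projectives lying in $\sC^A$ must sit in one common $\sC_{i_0}$ is not forced by orthogonality (S1) alone --- two projectives in distinct mutually orthogonal components are entirely compatible with (S1)--(S3) --- and the factoring axiom (S3) does not by itself yield $\ann_A(\sC_{i_0})=0$. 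The paper's product-decomposition argument sidesteps this difficulty.
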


We note that the class of generalized double tilted algebras contains all tilted algebras and all algebras of finite representation type.
We refer to \cite{RS3} for basic properties and representation theory of arbitrary generalized double tilted algebras. We only mention that,
if $B$ is a~generalized double tilted algebra, then for all but finitely modules $X$ in $\ind B$ we have $\pd_BX\leq 1$ or $\id_BX\leq 1$
but $B$ may have arbitrary (finite or infinite) global dimension.

We may now complete the proof of Theorem  \ref{thm4}.
The equivalence of the statements (ii) and (iii) follows from Proposition \ref{prop-gdta} and definition of
a~generalized double tilted algebra
The implication (ii) $\Longrightarrow$ (i) is obvious. The following proposition shows that also (i) implies (iii).
\begin{prop}\label{prop-62}
Let $A$ be an algebra such that $\Gamma_A$ admits a~finite separating family $\sC$ of components. Then $A$ is a~generalized double tilted algebra.
\end{prop}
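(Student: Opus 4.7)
The strategy is to apply Proposition \ref{prop-gdta}, which characterises generalized double tilted algebras as those whose Auslander--Reiten quiver admits a~faithful generalized standard almost acyclic component. So my goal is to produce a~single such component out of the given finite separating family $\sC^A=(\sC_i)_{i\in I}$.

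The first two ingredients are essentially immediate. By (S1) in the definition of a~separating family, each $\sC_i$ is generalized standard, and by the remark that $\ann_A(\sC^A)=0$ the whole family is faithful. Next I would invoke Proposition \ref{prop-cru} contrapositively: if the cyclic part $_c\sC^A$ were infinite, then Proposition \ref{prop-cru} would place infinitely many pairwise distinct stable tubes inside $\sC^A$, contradicting $|I|<\infty$. Therefore $_c\sC^A$ is finite, which means that each individual $\sC_i$ has finite cyclic part, that is, is almost acyclic. At this stage $\sC^A$ is a~finite family of pairwise orthogonal, generalized standard, almost acyclic components whose union is faithful.

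The main step, and the main obstacle, is to upgrade this to a~\emph{single} faithful generalized standard almost acyclic component. I would argue that $\sC^A$ must in fact consist of exactly one component. Assume for contradiction that $|I|\geq 2$ and write $\sC^A=\sC_1\sqcup\cdots\sqcup\sC_r$. For each $i$ the faithful algebra $B(\sC_i)=A/\ann_A(\sC_i)$ is a~generalized double tilted algebra by Proposition \ref{prop-gdta} applied inside $\Gamma_{B(\sC_i)}$, and therefore carries its own left and right tilted quotient algebras whose indecomposable projectives and injectives accommodate the left and right borders of a~multisection of $\sC_i$. The plan is to combine the pairwise orthogonality $\Hom_A(\sC_i,\sC_j)=0$ from (S1), the vanishings $\Hom_A(\cQ^A,\cP^A)=\Hom_A(\cQ^A,\sC^A)=\Hom_A(\sC^A,\cP^A)=0$ from (S2) and the factorization property (S3), to show that the primitive idempotents of $A$ split into disjoint blocks, each served exclusively by one of the components $\sC_i$. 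Such a~block decomposition would contradict the standing assumption that $A$ is indecomposable.

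Once the finite family $\sC^A$ has been reduced to a~single component $\sC$, this component is automatically faithful (because $\ann_A(\sC)=\ann_A(\sC^A)=0$), generalized standard (by (S1)) and almost acyclic (by the contrapositive use of Proposition \ref{prop-cru} above), and Proposition \ref{prop-gdta} delivers the conclusion that $A$ is a~generalized double tilted algebra. The hard part will be the partition argument in the previous paragraph: any nonzero homomorphism between modules supported in different $\sC_i$'s, or any overlap between the composition factors appearing in distinct $\sC_i$'s, would break the desired block decomposition, so one must rule out such overlaps carefully using the interplay of (S1)--(S3) with the structure of the tilted quotients $B(\sC_i)^{(l)},B(\sC_i)^{(r)}$ and the sincerity of the family $\sC^A$.
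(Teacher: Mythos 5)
Your overall skeleton matches the paper's proof: invoke Proposition \ref{prop-cru} contrapositively to conclude that the cyclic part of $\sC^A$ is finite (so each $\sC_i$ is almost acyclic), reduce via Proposition \ref{prop-gdta} to showing that the family consists of a single component, and derive that from the indecomposability of $A$. That much is correct and is exactly how the paper proceeds.

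However, you do not actually prove the crucial step that $m\geq 2$ forces $A$ to decompose; you describe it only as a ``plan'' and concede in your last paragraph that the partition argument is ``the hard part.'' That step is precisely where the content of the proof lies, and mere appeal to (S1)--(S3) and orthogonality is not, by itself, enough: indecomposable algebras routinely have many pairwise orthogonal generalized standard components. The paper's argument is considerably more structured. It chooses, for each $\sC_i$, a finite multisection $\Delta^{(i)}$ with left border $\Delta_l^{(i)}$ and sets $M^{(l)}=\bigoplus_i M_i^{(l)}$, where $M_i^{(l)}$ is the sum of the modules on $\Delta_l^{(i)}$. Generalized standardness of each $\sC_i$ and pairwise orthogonality give $\Hom_A(M^{(l)},\tau_A M^{(l)})=0$, so the Liu--Skowro\'nski criterion produces a hereditary algebra $H^{(l)}=\End_A(M^{(l)})$, a tilting module $T^{(l)}=D(M^{(l)})$, and a tilted algebra $B^{(l)}=B(M^{(l)})\cong\End_{H^{(l)}}(T^{(l)})$ which splits as $B_1^{(l)}\times\cdots\times B_m^{(l)}$. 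One then lets $P_i$ be the indecomposable projective $B_i^{(l)}$-modules and $Q_i$ the projectives in $\sC_i$ that are not predecessors of $\Delta_l^{(i)}$, and verifies $\Hom_A(Q_j,P_i)=0$ for all $i,j$ (using generalized standardness when $i=j$ and orthogonality when $i\neq j$), from which
\[
A\ \cong\ \left[\begin{matrix} D & N \\ 0 & B^{(l)} \end{matrix}\right]\ \cong\ B(\sC_1)\times\cdots\times B(\sC_m),
\]
contradicting indecomposability. Your proposal neither introduces the multisections and left border modules that make $\Hom_A(Q,P)=0$ provable, nor exhibits the triangular matrix structure that actually produces the block decomposition, so the proof is incomplete at its essential point.
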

\begin{proof}
Let $\sC_1, \ldots, \sC_m$ be the components of $\sC$. It follows from Proposition \ref{prop-cru} that the cyclic part $_c\sC$ of $\sC$ is finite.
Hence $\sC_1, \ldots, \sC_m$ are almost cyclic components of $\Gamma_A$.
Therefore, it is enough to prove that $m=1$, by Proposition \ref{prop-gdta}.

Assume $m\geq 2$. We shall prove that this contradicts the imposed indecomposablity of $A$. For each $i\in\{1,\ldots,m\}$,
we may choose a~finite multisection $\Delta^{(i)}$ of $\sC_i$, because $\sC_i$ is generalized standard, and hence almost periodic.
Further, let $\Delta_l^{(i)}$ be the left border of $\Delta^{(i)}$, $\Delta_r^{(i)}$ the right border of $\Delta^{(i)}$, and
$\Delta_c^{(i)}$ the core of $\Delta^{(i)}$. Then it follows from \cite[Proposition 2.4]{RS3} that every indecomposable module
$X$ in $\sC_i$ is in $\Delta_c^{(i)}$ or is a~predecessor of $\Delta_l^{(i)}$ or is a~successor of $\Delta_r^{(i)}$.
We denote by $M_i^{(l)}$ the direct sum of all indecomposable modules lying on $\Delta_l^{(i)}$.
Moreover, let $M^{(l)} = M_1^{(l)}\oplus\ldots\oplus M_m^{(l)}$, and $B^{(l)}=B(M^{(l)})$, the faithful algebra of $M^{(l)}$.
We note that $\Hom_A(M_i^{(l)},\tau_AM_i^{(l)})=0$ for any $i\in\{1,\ldots,m\}$, because $\sC_i$ is generalized standard and
there is no path in $\sC_i$ from an indecomposable direct summand of $M_i^{(l)}$ to an indecomposable direct summand of $\tau_AM_i^{(l)}$.
Moreover, for any $i\neq j$ in $\{1,\ldots,m\}$, we have $\Hom_A(M_i^{(l)},\tau_AM_j^{(l)})=0$, because the components $\sC_i$ and $\sC_j$
are orthogonal. Therefore, we conclude that $\Hom_A(M^{(l)},\tau_AM^{(l)})=0$. Let $H^{(l)}=\End_A(M^{(l)})$ and $T^{(l)}=D(M^{(l)})$,
where $D$ is the standard duality on $\mo A$. Then applying arguments from the proof of
\cite[Theorem VIII.5.6]{ASS}, or \cite[Theorem VIII.7.7]{SY4}, we conclude that:
\begin{enumerate}
\renewcommand{\labelenumi}{\rm(\arabic{enumi})}
\item $H^{(l)}$ is a~hereditary algebra (not necessarily indecomposable),
\item $T^{(l)}$ is a~tilting module in $\mo H^{(l)}$,
\item There is a~canonical isomorphism of algebras $B^{(l)}\to\End_{H^{(l)}}(T^{(l)})$.
\end{enumerate}
We note that $B^{(l)}$ (respectively, $H^{(l)}$) is a~product of indecomposable algebras corresponding to the
connected components of $\Delta_l$. In particular, we conclude that
\[ B^{(l)} = B_1^{(l)}\times \ldots \times B_m^{(l)}, \]
where $B_i^{(l)}=B(M_i^{(l)})$ for $i\in\{1,\ldots,m\}$. For each $i\in\{1,\ldots,m\}$, let $P_i$ be the direct sum of all
indecomposable projective modules in $\mo B_i^{(l)}$ and $Q_i$ the direct sum of all projective modules in $\sC_i$
which are not predecessors of $\Delta_l^{(i)}$. Moreover, let $D_i=\End_A(Q_i)$ and $N_i=\Hom_A(P_i,Q_i)$, for any
$i\in\{1,\ldots,m\}$. We note that $B_i^{(l)}=\End_A(P_i)$, and $N_i$ is a $B_i^{(l)}-D_i$-bimodule.
Further, since $M_i^{(l)}$ is a~faithful $B_i^{(l)}$-module, there is a~monomorphism $P_i\to(M_i^{(l)})^{n_i}$ for some $n_i\geq 1$.
Since $\sC_i$ is generalized standard, we have $\Hom_A(Q_i,M_i^{(l)})=0$, by the choice of $Q_i$, and hence $\Hom_A(Q_i,P_i)=0$.
Similarly, for $i\neq j$ in $\{1,\ldots,m\}$, we have $\Hom_A(Q_j,P_i)=0$, because $\sC_i$ and $\sC_j$ are orthogonal.
Finally, let $P=P_1\oplus\ldots\oplus P_m$, $Q=Q_1\oplus\ldots\oplus Q_m$, $N=\Hom_A(P,Q)$, and $D=\End_A(Q)$. We note that
$B^{(l)} = \End_A(P) = B_1^{(l)}\times\ldots\times B_m^{(l)}$, $D=D_1\times\ldots\times D_m$. Moreover, for each
$i\in\{1,\ldots,m\}$, we have an isomorphisms of algebras
\[ B(\sC_i) \cong \left[\begin{matrix} D_i & N_i \\ 0 & B_i^{(l)} \end{matrix}\right]. \]
Since $\Hom_A(Q,P)=0$, we obtain isomorphisms of algebras
\[ A \cong \End(A_A) \cong \left[\begin{matrix} D & N \\ 0 & B^{(l)} \end{matrix}\right] \cong B(\sC_1)\times\ldots\times B(\sC_m), \]
and this contradicts the indecomposablity of $A$. This finishes the proof of Theorem \ref{thm4}.
\end{proof}

\section{Generically tame algebras} \label{sec7}

The aim of this section is to prove Theorem \ref{thm10}, Corollary \ref{cor11}, Theorem \ref{thm12} using the known results
on generically tame quasitilted algebras.

Recall that, if $B$ is a~quasitilted algebra, then $\gldim B\leq 2$ and every module $X$ in $\ind B$ satisfies
$\pd_BX\leq 1$ or $\id_BX\leq 1$. In particular, we have a~well defined Euler characteristic
\[ \chi_B: K_0(B) \to {\Bbb Z} \]
such that $\chi_B(M) = |\End_B(M)| - |\Ext_B^1(M,M)|$ for any module $M$ in $\mo B$. We note that we may have $\chi_B$ indefinite
but with the well defined Euler characteristic $\chi_B(M)$ of any module $M$ in $\mo B$ satisfying
$\chi_B(M) = |\End_B(M)| - |\Ext_B^1(M,M)|\geq 0$.

Recall that following \cite{CB1, CB2} an algebra $A$ is called generically tame if for any positive integer $d$, there are
only finitely many isomorphism classes of generic modules of endolength $d$. It is known that if $A$ is generically tame
and $M$ is a~generic $A$-module then the endomorphism ring $\End_A(M)$ is a~$PI$ ring \cite[Section7]{CB2}.
Further, following \cite{CB2}, an algebra $A$ is called generically wild if there is a~generic $A$-module $M$ whose
endomorphism ring $\End_A(M)$ is not a~$PI$ ring. We also note that, if $H$ is an indecomposable hereditary algebra,
then $H$ is a~finite dimensional algebra over a~field (the center of $H$). It was already shown by Ringel \cite[Section 6]{Ri-01}
that a~hereditary algebra $H$ of Euclidean type admits a~unique generic module, and its endomorphism ring is a~$PI$ ring (by \cite[Corollary 6.12]{BGL}).
Further, Crawley-Boevey proved in \cite[Corollary 8.4]{CB2} that an indecomposable hereditary algebra is generically tame if and only if
$H$ is of Dynkin or Euclidean type.

Then, applying Propositions \ref{prop-tilted-1} and \ref{prop-tilted-2}, we obtain the following fact.
\begin{prop}\label{prop-71}
Let $H$ be an indecomposable hereditary algebra, $T$ a~tilting module in $\mo H$ without indecomposable preinjective (respectively, postprojective)
direct summands, and $B=\End_H(T)$ the associated tilted algebra.  Then the following statements are equivalent:
\begin{enumerate}
\renewcommand{\labelenumi}{\rm(\roman{enumi})}
\item $B$ is generically tame.
\item $B$ is generically finite.
\item $B$ is of Euclidean type.
\end{enumerate}
\end{prop}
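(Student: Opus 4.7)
The plan is to deduce the equivalences from Crawley-Boevey's characterization of generically tame indecomposable hereditary algebras (\cite[Corollary 8.4]{CB2}) together with the tilting/derived equivalence between $\mo H$ and $\mo B$ induced by the tilting module $T$.

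The implication (ii) $\Longrightarrow$ (i) is immediate. For (iii) $\Longrightarrow$ (ii), I would use the derived equivalence $F\colon D^b(\mo H)\to D^b(\mo B)$ sending $T$ to $B$. Since $H$ is hereditary, every indecomposable object of $D^b(\mo H)$ is a shift of an indecomposable $H$-module, and under $F$ the corresponding $B$-modules sit inside the Brenner-Butler torsion classes $\sY(T)$ and $\sX(T)$. Consequently every generic $B$-module is obtained (up to shift) from a generic $H$-module via $F^{-1}$, with matching endolength. Since $H$ of Euclidean type admits, up to isomorphism, a unique generic module (Ringel, as recalled before the statement), $B$ has only finitely many generic modules and (ii) follows.

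For (i) $\Longrightarrow$ (iii), I would argue contrapositively. If $H$ is wild, then by \cite[Corollary 8.4]{CB2} there exist infinitely many pairwise nonisomorphic generic $H$-modules of a common endolength; applying $F$ as above yields infinitely many pairwise nonisomorphic generic $B$-modules of a common endolength, contradicting (i). The Dynkin alternative is excluded at once by the hypothesis that $T$ has no indecomposable preinjective summand, for over a Dynkin hereditary algebra every indecomposable module is preinjective (coinciding with postprojective), and then $T=0$, which is absurd. Therefore $H$ is of Euclidean type.

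The main obstacle will be the bookkeeping for the correspondence of generic modules under the derived equivalence induced by $T$: one must verify that endolength is preserved up to a controlled factor, that distinct generic $H$-modules give rise to distinct generic $B$-modules, and that no new generic $B$-module escapes this correspondence. This is routine given the Brenner-Butler tilting theorem and the hereditary nature of $H$; once it is in place, the proof is concluded by the two results cited above.
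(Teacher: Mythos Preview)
Your route differs from the paper's, which does not transfer generic modules through the derived equivalence at all. Instead, the paper derives the proposition from Propositions~\ref{prop-tilted-1} and~\ref{prop-tilted-2}: if $H$ is of wild type, those propositions produce a wild concealed quotient algebra $B(\cP_B)$ of $B$; since generic tameness is inherited by quotient algebras and wild concealed algebras are generically wild (by \cite{CB2}), this gives (i)~$\Rightarrow$~(iii). For (iii)~$\Rightarrow$~(ii) the same propositions describe $\Gamma_B$ completely when $H$ is of Euclidean type, and the known representation theory of domestic tilted algebras of Euclidean type yields generic finiteness. No statement about how tilting acts on generic modules is needed.

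There is a genuine gap in your argument. Generic modules have infinite length, so they do not lie in $\mo H$ or $\mo B$, and the equivalence $F\colon D^b(\mo H)\to D^b(\mo B)$ you invoke does not apply to them. To run your argument you must pass to $\mathrm{Mod}\,H$ and $\mathrm{Mod}\,B$ and show that the tilting functors induce a bijection between isomorphism classes of generic modules controlling endolength. This is not a formal consequence of the Brenner--Butler theorem: an indecomposable module of infinite length need not lie in either $\mathcal{T}(T)$ or $\mathcal{F}(T)$, so its image under $R\Hom_H(T,-)$ can be a genuine two-term complex rather than a $B$-module, and even when it is a module one must still verify finite endolength and infinite $B$-length separately. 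For the wild direction you would further need the infinitely many generic $H$-modules of bounded endolength to land in the correct torsion class, which you have not addressed. A rigorous version of your idea essentially requires the invariance of the Ziegler spectrum (and of endofiniteness) under derived equivalence, which is a substantial theorem and not the ``routine bookkeeping'' you suggest. The paper's approach via concealed quotient algebras sidesteps all of this.
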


We also recall the known characterizations of representation-infinite tilted algebras of Euclidean type.
\begin{prop}\label{prop-72}
Let $H$ be an indecomposable hereditary algebra, $T$ a~tilting module in $\mo H$ without indecomposable preinjective (respectively, postprojective)
direct summands, and $B=\End_H(T)$ the associated tilted algebra.  The following statements are equivalent:
\begin{enumerate}
\renewcommand{\labelenumi}{\rm(\roman{enumi})}
\item $B$ is of Euclidean type.
\item $\chi_B$ is positive semidefinite.
\item $\Gamma_B$ is almost periodic.
\item $\Sigma_B$ is acyclic.
\end{enumerate}
\end{prop}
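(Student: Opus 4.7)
The plan is to establish the four-way equivalence by linking the representation type of $H$ to the structural properties of $\Gamma_B$ and $\Sigma_B$ through the derived equivalence $D^b(\mo B)\simeq D^b(\mo H)$ induced by $T$ and the structure theorems Propositions \ref{prop-tilted-1} and \ref{prop-tilted-2}. First I would dispose of (i)$\Leftrightarrow$(ii) using that the tilting $T$ induces an isometry $K_0(B)\cong K_0(H)$ intertwining the Euler bilinear forms, so $\chi_B$ is positive semidefinite if and only if $\chi_H$ is, hence if and only if $H$ is of Dynkin or Euclidean type. Under the standing representation-infinite setting in which the statement is used (Dynkin case makes $B$ representation-finite and all the listed conditions trivial), positive semidefiniteness characterizes the Euclidean type.

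For (i)$\Rightarrow$(iii) and (i)$\Rightarrow$(iv), I would invoke Proposition \ref{prop-tilted-1}(i), describing $\sY\Gamma_B$ as a postprojective component $\cP_B$ together with an infinite family of ray tubes, and dually Proposition \ref{prop-tilted-2}(i), describing $\sX\Gamma_B$. The remaining components form a connecting component, which is almost acyclic with only finitely many non-periodic $\tau_B$-orbits. Since ray tubes and coray tubes consist entirely of periodic modules, $\Gamma_B$ is almost periodic, giving (iii). For acyclicity of $\Sigma_B$, the decomposition $\Gamma_B=\cP^B\cup\sC^B\cup\cQ^B$ for a tame tilted algebra satisfies $\Hom_B(\cQ^B,\cP^B)=0$, and the tame concealed faithful algebras $B(\cP_B)$ and $B(\cQ_B)$ impose a strict linear ordering of the components from the postprojective side through the regular tubes and the connecting component to the preinjective side, precluding any oriented cycle in $\Sigma_B$.

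For the converses (iii)$\Rightarrow$(i) and (iv)$\Rightarrow$(i), I would rule out the wild case by contradiction using Propositions \ref{prop-tilted-1}(ii) and \ref{prop-tilted-2}(ii). If $H$ is wild, $\sY\Gamma_B$ contains infinitely many components obtained from $\mathbb{Z}\mathbb{A}_{\infty}$ by a~finite number of ray insertions, each of which has all but finitely many indecomposable modules non-periodic, contradicting (iii). To contradict (iv), I would use that the faithful algebra $B(\cP_B)$ is wild concealed; by classical results of Kerner and others (\cite{Ke, Ke2, Ke3, Ri1-2, St}), between any two regular-type components $\sR_1, \sR_2$ of a~wild concealed algebra there are nonzero infinite radical homomorphisms in both directions, yielding a~cycle $\sR_1\to\sR_2\to\sR_1$ in $\Sigma_B$. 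The main obstacle will be the rigorous execution of this last step: producing the nonzero compositions in $\rad_B^{\infty}$ between regular-type components of a~wild tilted algebra that close into an oriented cycle in $\Sigma_B$, which rests on the fine degree theory of irreducible homomorphisms and Kerner's bijection between regular components of wild concealed algebras.
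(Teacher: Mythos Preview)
The paper does not supply a proof of this proposition: it is introduced with the sentence ``We also recall the known characterizations of representation-infinite tilted algebras of Euclidean type'' and is treated as background, with the surrounding propositions attributed to the cited literature. Your proposal therefore goes beyond what the paper does by sketching an actual argument.

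Your outline is essentially correct, but one sentence is wrong as stated: ray tubes and coray tubes do \emph{not} consist entirely of periodic modules---the indecomposables on an inserted ray (or coray) lie in non-periodic $\tau_B$-orbits. What is true, and what you need for (i)$\Rightarrow$(iii), is that in a tilted algebra of Euclidean type only finitely many of the tubes in $\sY\Gamma_B$ (respectively $\sX\Gamma_B$) carry ray (respectively coray) insertions, since there are only finitely many indecomposable projectives and injectives; each such insertion contributes finitely many non-periodic orbits, and together with the finitely many orbits in the postprojective, connecting, and preinjective components this gives almost periodicity of the whole of $\Gamma_B$. With this correction your scheme for (i)$\Leftrightarrow$(ii) via the tilting isometry of Grothendieck groups, and for (iii)$\Rightarrow$(i), (iv)$\Rightarrow$(i) via Propositions \ref{prop-tilted-1}(ii), \ref{prop-tilted-2}(ii) and the Kerner results, is sound and matches the references the paper relies on implicitly.
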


It follows also from general theory that, if $B$ is a~tilted algebra of Euclidean type as above, then $\Gamma_B$
consists of a~postprojective component, a~preinjective component, and an infinite family ray tubes (respectively, coray tubes),
and all but finitely many of them are stable tubes of rank one.

Let $B$ be a~quasitilted algebra of canonical type with a~separating family of ray tubes (respectively, coray tubes).
Then we have three possibilities for $B$ with respect to behaviour of its Euler form $\chi_B$ (see \cite{LS}):
\begin{enumerate}
\renewcommand{\labelenumi}{\rm(\arabic{enumi})}
\item $\chi_B$ is positive semidefinite of corank $1$, and $B$ is a~tilted algebra of Euclidean type,
\item $\chi_B$ is positive semidefinite of corank $2$, and $B$ is a~tubular algebra in the sense of Ringel \cite{Ri1, Ri2}
(see also \cite{Le1}),
\item $\chi_B$ is indefinite.
\end{enumerate}

The following proposition follows from results established by Meltzer in \cite{Me}.
\begin{prop}\label{prop-73}
Let $B$ be a~quasitilted algebra of canonical type with a~separating family of ray tubes (respectively, coray tubes), and
assume that $\chi_B$ is indefinite. Then $B$ admits a~wild concealed quotient algebra $C$.
\end{prop}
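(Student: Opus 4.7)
The plan is to combine the structural description of quasitilted algebras of canonical type with a reduction of Meltzer. First, by Proposition~\ref{prop-qtact}, $B$ is a semiregular branch enlargement of a concealed canonical algebra $\Lambda$, and the hypothesis that the separating family $\sC^{B}$ consists of ray tubes forces the enlargement to use only admissible operations of type (ad~1). Hence $B$ is a tubular (branch) extension of $\Lambda$ in the sense of Ringel~\cite{Ri1}, and its tubular type $(p_{1},\ldots,p_{s})$ is determined by the weights of the canonical algebra underlying $\Lambda$ together with the numerical data of the branches inserted at the separating stable tubes of $\Gamma_{\Lambda}$.

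Next, I would translate the indefiniteness of $\chi_{B}$ into combinatorial information about this tubular type. Among the three mutually exclusive possibilities recalled just before the proposition, the positive semidefinite corank~$1$ case corresponds to the Euclidean tubular types and the corank~$2$ case to the four tubular tubular types $(2,2,2,2)$, $(3,3,3)$, $(2,4,4)$, $(2,3,6)$ (see~\cite{Ri1, Le1}). The assumption that $\chi_{B}$ is indefinite therefore means that the tubular type of $B$ lies in the complementary, wild range; in particular $B$ is neither tilted of Euclidean type nor tubular.

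The core of the argument is then to invoke Meltzer's reduction procedure from~\cite{Me}, which shows that for any such wild tubular enlargement $B$ of a concealed canonical algebra one may select an idempotent $e\in B$ such that the quotient algebra $C = B/B(1-e)B$ (equivalently, a convex subcategory of $B$) is a wild concealed algebra in the sense of Ringel, that is $C \cong \End_{H}(T)$ for an indecomposable wild hereditary algebra $H$ and a postprojective tilting module $T$ in $\mo H$. The main obstacle will be the bookkeeping required to exhibit this idempotent: one proceeds by induction on the number of (ad~1) operations leading from $\Lambda$ to $B$, using that at some stage either the underlying canonical algebra is already wild (so a wild concealed quotient can be extracted inside $\Lambda$ itself) or one of the branch insertions enlarges the tubular type past the tubular range, at which point a critical wild hereditary convex subquiver appears and carries a postprojective tilting module whose endomorphism ring is the sought-for quotient $C$. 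The coray tube case follows by the dual argument, replacing (ad~1) throughout by (ad~1$^{*}$).
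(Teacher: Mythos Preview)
Your proposal is essentially the same approach as the paper's: the paper gives no detailed argument and simply records that the proposition ``follows from results established by Meltzer in~\cite{Me}''. What you have written is a reasonable unpacking of how one would organise the appeal to~\cite{Me}, first reducing via Proposition~\ref{prop-qtact} to a branch extension of a concealed canonical algebra and then reading off the wild concealed quotient from the tubular type. One small terminological point: your parenthetical ``equivalently, a convex subcategory of $B$'' conflates the quotient $B/B(1-e)B$ with the idempotent subalgebra $eBe$; these are different constructions in general, and since the statement asks for a quotient algebra you should keep only the former description.
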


The following fact was proved by Lenzing \cite{Le2}.
\begin{prop}\label{prop-74}
Let $B$ be a~tubular algebra. Then $B$ is generically infinite of polynomial growth.
\end{prop}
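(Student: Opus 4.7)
The plan is to exploit the derived equivalence between $\mo B$ and the category $\coh \mathbb{X}$ of coherent sheaves on a weighted projective line $\mathbb{X}$ of tubular type, established by Geigle and Lenzing and used systematically by Lenzing in \cite{Le1, Le2}. This reduces the classification of indecomposable and generic $B$-modules to the classification of indecomposable and generic sheaves on $\mathbb{X}$, which are organized by a slope function $\mu$ taking values in $\mathbb{Q} \cup \{\infty\}$ on finite length objects and extending to $\mathbb{R} \cup \{\infty\}$ on generic objects.

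First, I would recall that, under this equivalence, each rational slope $q \in \mathbb{Q} \cup \{\infty\}$ corresponds to a separating tubular $\mathbb{P}^1$-family of stable tubes of simple regular objects in $\coh \mathbb{X}$, and these together account for all indecomposable finite length $B$-modules. The irrational slopes, in contrast, parametrize generic sheaves: to each irrational $q \in \mathbb{R}$ there is associated a generic object $G_q$, uniquely determined up to isomorphism, whose endolength is a positive integer depending on $q$. Since $\mathbb{R}\setminus\mathbb{Q}$ is uncountable, this already shows that $B$ is generically infinite, so there is no hope of $B$ being generically finite.

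Second, to obtain polynomial growth in the endolength parameter, I would exploit the action on the slope parameter of an arithmetic subgroup $\Gamma \subseteq \mathrm{SL}_2(\mathbb{Z})$ coming from tubular mutations, or equivalently from self-equivalences of $D^b(\coh \mathbb{X})$ permuting the separating tubular families. This action is transitive on $\mathbb{Q} \cup \{\infty\}$ and acts on $\mathbb{R}\setminus\mathbb{Q}$ by M\"obius transformations, and the endolength of $G_q$ transforms in a controlled way under $\Gamma$. By transporting an arbitrary generic module into a fundamental domain for the $\Gamma$-action and bounding the size of the needed transformation in terms of the endolength, one concludes that the number of isomorphism classes of generic $B$-modules of endolength at most $d$ is $O(d^m)$ for some fixed $m$.

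The main obstacle will be the precise endolength computation and the resulting polynomial estimate: one must establish an explicit relationship between the endolength of $G_q$ and the arithmetic data of the irrational slope $q$ (such as the denominators appearing in its continued fraction expansion), and then extract a polynomial bound from this. This is the substantive content of Lenzing's analysis in \cite{Le2}, which uses the numerical invariants of $\coh \mathbb{X}$ (the rank and degree vectors in the Grothendieck group $K_0(\coh \mathbb{X})$) to control endolength uniformly in the slope.
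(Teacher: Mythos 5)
The paper does not prove this proposition at all: it is stated as a known result, with the entire argument consisting of the citation to Lenzing's \cite{Le2}. So your task was really to reconstruct Lenzing's argument, and your overall framework (derived equivalence with $\coh\mathbb{X}$ for a weighted projective line of tubular type, slope functions, the action of a subgroup of $\mathrm{SL}_2(\mathbb{Z})$ by tubular mutations, endolength controlled by the arithmetic of the slope) is indeed the right one and matches Lenzing's.

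However, there is a substantive error at the heart of your sketch: you assert that the generic $B$-modules are parametrized by the \emph{irrational} slopes $q\in\mathbb{R}\setminus\mathbb{Q}$, and you invoke the uncountability of $\mathbb{R}\setminus\mathbb{Q}$ to conclude that $B$ is generically infinite. That cannot be right. Endolength is a positive integer, so an uncountable family of pairwise nonisomorphic generic modules would force some single endolength $d$ to carry uncountably many of them, contradicting the very conclusion ``generically of polynomial growth'' (which in particular implies generic tameness, i.e.\ only finitely many per endolength). In fact, the generic $B$-modules of finite endolength correspond to the \emph{rational} slopes $q\in\mathbb{Q}^{\geq 0}\cup\{\infty\}$: for each such $q$ the separating tubular $\mathbb{P}^1$-family $\cT_q$ behaves like the regular module category of a tame concealed algebra and contributes one generic module, exactly as a tame hereditary algebra of Euclidean type has its unique generic module (as recorded in the discussion preceding Proposition~\ref{prop-71}). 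This gives a \emph{countably infinite} family, which is what ``generically infinite'' means here. The irrational slopes do not produce finite-endolength generic modules and therefore play no role. Once this is corrected, the mechanism you outlined in your second step — transporting a slope by tubular mutations into a fundamental domain and reading off the endolength as (essentially) $c\cdot(a+b)$ for a slope $a/b$ in lowest terms — is precisely what yields the polynomial bound on the number of generic modules of endolength at most $d$.
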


Summing up, we obtain the following fact.
\begin{prop}\label{prop-75}
Let $B$ be a~quasitilted algebra of canonical type with a~separating family of ray tubes (respectively, coray tubes).
Then the following statements are equivalent:
\begin{enumerate}
\renewcommand{\labelenumi}{\rm(\roman{enumi})}
\item $B$ is generically tame.
\item $B$ is generically of polynomial growth.
\item $\chi_B$ is positive semidefinite.
\item $B$ is a~tilted algebra of Euclidean type or a~tubular algebra.
\end{enumerate}
\end{prop}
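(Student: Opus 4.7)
The plan is to establish a cycle of implications $\mathrm{(iv)}\Rightarrow\mathrm{(ii)}\Rightarrow\mathrm{(i)}\Rightarrow\mathrm{(iii)}\Rightarrow\mathrm{(iv)}$, where two edges are essentially free and the real work is concentrated in $\mathrm{(i)}\Rightarrow\mathrm{(iii)}$.

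First I would handle the trivial and almost trivial implications. The implication $\mathrm{(ii)}\Rightarrow\mathrm{(i)}$ is immediate from the definitions, since generic polynomial growth bounds the number of generic modules of each endolength by a polynomial, and in particular finitely. For $\mathrm{(iv)}\Rightarrow\mathrm{(ii)}$, if $B$ is a tilted algebra of Euclidean type then, since by hypothesis $B=\End_H(T)$ with $T$ without preinjective (respectively postprojective) summands, Proposition \ref{prop-71} gives that $B$ is generically finite, which is trivially generically of polynomial growth; and if $B$ is tubular, Proposition \ref{prop-74} directly asserts that $B$ is generically of polynomial growth. For $\mathrm{(iii)}\Rightarrow\mathrm{(iv)}$ I would invoke the trichotomy of $\chi_B$ recalled in the excerpt just before Proposition \ref{prop-73}: a quasitilted algebra of canonical type has Euler form either positive semidefinite of corank $1$ (and then is a tilted algebra of Euclidean type), positive semidefinite of corank $2$ (and then is tubular), or indefinite. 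So $\chi_B$ positive semidefinite directly forces $B$ into one of the two classes in $\mathrm{(iv)}$.

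The heart of the argument is $\mathrm{(i)}\Rightarrow\mathrm{(iii)}$, which I would prove by contraposition. Assume that $\chi_B$ is indefinite. Then by Proposition \ref{prop-73} there exists a wild concealed quotient algebra $C$ of $B$. The two steps I would then carry out are: (a) observe that generic tameness is inherited by quotient algebras, since for any surjection $B\to C$ every $C$-module becomes a $B$-module of the same length and with the same endomorphism ring, so any generic $C$-module is a generic $B$-module of the same endolength; hence if $B$ is generically tame then so is $C$. (b) Recall that a wild concealed algebra $C$ is generically wild, as it is derived equivalent to a wild hereditary algebra, and for a wild hereditary algebra generic wildness follows from \cite{CB2}; alternatively, by Crawley-Boevey's theorem identifying generic tameness with tameness in the Drozd sense (over an algebraically closed field), so wildness of $C$ rules out generic tameness. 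Combining (a) and (b) gives a contradiction, proving $\mathrm{(i)}\Rightarrow\mathrm{(iii)}$.

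The main obstacle, and the step requiring the most care, is step (b) in the contrapositive: ensuring that the generic wildness of wild concealed algebras is justified in the full generality of artin algebras over a commutative artin ring $K$, not only over algebraically closed fields. Here I would rely on the $PI$-characterization of generically tame algebras from \cite{CB2}: if $B$ were generically tame then every generic $B$-module (hence every generic $C$-module, by (a)) would have a $PI$ endomorphism ring, and the existence of a generic module with non-$PI$ endomorphism ring over any wild concealed $C$ (which can be deduced from its wild hereditary derived type and the classification of generic modules for wild hereditary algebras) gives the contradiction. Once this point is settled the four implications close up into the required cycle, completing the proof.
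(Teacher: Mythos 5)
Your proof is correct and fills in, in the natural way, the argument the paper compresses into the phrase ``Summing up, we obtain the following fact'': the cycle (iv)$\Rightarrow$(ii) via Propositions~\ref{prop-71} and~\ref{prop-74}, (ii)$\Rightarrow$(i) by definition, (iii)$\Leftrightarrow$(iv) via the stated trichotomy for $\chi_B$, and the contrapositive of (i)$\Rightarrow$(iii) via Proposition~\ref{prop-73} together with the observation that generic tameness passes to quotients while wild concealed algebras are generically wild (using Crawley-Boevey's $PI$-criterion). This is the same route the paper intends, and your care about justifying the generic wildness of wild concealed algebras over an arbitrary commutative artin ring $K$ is an appropriate refinement rather than a departure from it.
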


Furthermore, applying again \cite{LP0, Me} and the known results on the module categories of tubular algebras,
we obtain the following proposition.
\begin{prop}\label{prop-76}
Let $B$ be a~quasitilted algebra of canonical type with a~separating family of ray tubes (respectively, coray tubes).
Then the following statements are equivalent:
\begin{enumerate}
\renewcommand{\labelenumi}{\rm(\roman{enumi})}
\item $B$ is a~tilted algebra of Euclidean type or a~tubular algebra.
\item $\Gamma_B$ is almost periodic.
\item $\Sigma_B$ is acyclic.
\end{enumerate}
\end{prop}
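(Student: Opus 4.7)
The plan is to prove Proposition \ref{prop-76} by establishing the two forward implications (i)$\Rightarrow$(ii) and (i)$\Rightarrow$(iii) by a case analysis based on the two types of algebras appearing in (i), and the reverse direction by contraposition, using Propositions \ref{prop-75} and \ref{prop-73} to produce a wild concealed quotient of $B$ whose pathological AR behaviour is inherited by $\Gamma_B$.

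First I would treat (i)$\Rightarrow$(ii) and (i)$\Rightarrow$(iii). Consider the ray-tube case (the coray-tube case being dual). If $B=\End_H(T)$ is a tilted algebra of Euclidean type with $T$ having no indecomposable preinjective summands, then Proposition \ref{prop-tilted-1}(i) describes $\sY\Gamma_B$ as a postprojective component plus an infinite family of ray tubes, and Proposition \ref{prop-tilted-2}(i) (applied dually after observing that the same algebra admits a complementary tilting interpretation, or more directly via the canonical-type structure theory behind Proposition \ref{prop-qtact}) yields a preinjective component plus coray tubes on the other side, together with the separating family of ray tubes in between. The postprojective and preinjective components have only finitely many $\tau_B$-orbits (bounded by the rank of the Euclidean diagram), all tubes are periodic, so $\Gamma_B$ is almost periodic, and the separating property (S2) combined with the generalized standardness of the tube families and the connecting components prevents any cycle in $\Sigma_B$. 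The tubular case proceeds identically, using Ringel's explicit description \cite{Ri1, Ri2} of $\Gamma_B$ as a postprojective component, a $\mathbb{Q}_0^{\infty}$-indexed family of pairwise orthogonal tubular families, and a preinjective component, all of whose non-Dynkin parts are periodic, with $\Sigma_B$ acyclic by the linear-order structure on the parameter set.

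For the reverse direction I argue contrapositively: assume $B$ is neither a tilted algebra of Euclidean type nor a tubular algebra. By Proposition \ref{prop-75} the Euler form $\chi_B$ is indefinite, and then by Proposition \ref{prop-73} $B$ admits a wild concealed quotient algebra $C$. The AR quiver $\Gamma_C$ contains the postprojective component of the wild concealed algebra $C$, which is an acyclic full translation subquiver of the form $\mathbb{N}\Delta$ with $\Delta$ a wild connected quiver; in particular it contains infinitely many non-$\tau_C$-periodic orbits of unbounded width, and by Kerner's theory of wild hereditary and wild concealed algebras it carries nonzero elements of $\rad_C^{\infty}$ between indecomposables of the same component. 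By the description of the left-hand part $\cP^B$ of $\Gamma_B$ due to Meltzer \cite{Me} (compatible with the semiregular branch enlargement description of Proposition \ref{prop-qtact}(iii)), this postprojective component of $C$ extends, after a finite number of ray insertions, to a component of $\cP^B$ whose wild acyclic shape and non-$\tau_B$-periodic orbits are preserved. This contradicts (ii), and the internal infinite-radical homomorphisms produce a loop at that component of $\Sigma_B$, contradicting (iii).

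The principal obstacle will be the precise transfer of AR-theoretic information from the wild concealed quotient $C$ to the algebra $B$: while indefiniteness of $\chi_B$ passes easily through quotients, the statement that non-periodic components of $\Gamma_C$ survive in $\Gamma_B$ with their non-periodic $\tau$-orbits and their infinite-radical endomorphisms intact is delicate, since $\tau_C$ and $\tau_B$ need not agree on $C$-modules and $\mo C$ is not in general closed under $\tau_B$ inside $\mo B$. The remedy is Meltzer's explicit identification \cite{Me}, up to finitely many modules introduced by the ray insertions, of the postprojective component of the wild concealed base with a full translation subquiver of a component of $\cP^B$; granted this identification, $\tau_B$-periodicity on these modules coincides with $\tau_C$-periodicity, and the required non-almost-periodicity and non-acyclicity of $\Sigma_B$ follow.
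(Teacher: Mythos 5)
Your overall strategy---case analysis for (i)$\Rightarrow$(ii),(iii), and contraposition through Propositions \ref{prop-75} and \ref{prop-73} for the converse---matches the spirit of the paper's appeal to the structure theory of \cite{LP0, Me} and of tubular algebras, and the forward direction is essentially correct. The contrapositive direction, however, has a real gap: both claims you make about the postprojective component of the wild concealed quotient $C$ are false. Since $C$ is a finite-dimensional concealed algebra, its postprojective component has the shape $\mathbb{N}\Delta$ with $\Delta$ a \emph{finite} wild valued quiver, hence only finitely many $\tau_C$-orbits (one per vertex of $\Delta$), so it is trivially almost periodic; and being the connecting component of a tilted (indeed concealed) algebra, it is generalized standard, so $\rad_C^{\infty}$ vanishes identically on it. The same remains true of the corresponding component of $\cP^B$: for a tubular extension of $C$ the postprojective part is unchanged and no ray insertions occur there, so it still has finitely many $\tau_B$-orbits and is still generalized standard. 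It therefore witnesses neither the failure of (ii) nor of (iii).

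What does witness the failure are the infinitely many \emph{regular} components of type $\mathbb{Z}\mathbb{A}_{\infty}$ (with possibly finitely many ray insertions) which, by Meltzer's analysis \cite{Me} as reflected in Propositions \ref{prop-tilted-1}(ii) and \ref{prop-gma-str}(iii),(iv), appear in $\cP^B$ or $\cQ^B$ exactly when the concealed canonical base is wild, i.e.\ when $\chi_B$ is indefinite. Each such component has infinitely many non-periodic $\tau_B$-orbits, so $\Gamma_B$ is not almost periodic and (ii) fails. Moreover each such component fails to be generalized standard: by \cite[Theorem 2.3]{S2} a generalized standard component is almost periodic, and by \cite[Corollary 2.4]{S2} a regular generalized standard component is a stable tube or of the form $\mathbb{Z}\Delta$ with $\Delta$ finite, and neither description fits a $\mathbb{Z}\mathbb{A}_{\infty}$-shaped component. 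Hence there is a loop in $\Sigma_B$ at that component and (iii) fails. Redirecting your final paragraph to these $\mathbb{Z}\mathbb{A}_{\infty}$-shaped regular components rather than the postprojective one repairs the argument.
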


It follows also from general theory that, if $B$ is a~tubular algebra, then $\Gamma_B$ consists of a~postprojective component,
a~preinjective component and infinitely many ray tubes and coray tubes, and there are infinitely many stable tubes of rank at least two.

Finally, we recall also the following known fact.
\begin{prop}\label{prop-77}
Let $B$ be a~wild concealed algebra. Then there are infinitely many isomorphism classes of modules $M$ in $\ind B$ such that
$|\End_B(M)| < |\Ext_B^1(M,M)|$.
\end{prop}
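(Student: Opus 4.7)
The plan is to reduce to the wild hereditary source algebra via the tilting equivalence and then exhibit an infinite $\tau$-orbit of regular modules with strictly negative Euler form value. Write $B = \End_H(T)$ where $H$ is a~wild hereditary algebra and $T$ is a~postprojective tilting $H$-module. Since $T$ is postprojective, every regular $H$-module lies in the torsion class $\cT(T) = \{X \in \mo H \mid \Ext_H^1(T,X) = 0\}$, and the Brenner-Butler theorem asserts that the functor $F = \Hom_H(T,-)$ restricts to a~fully faithful embedding of $\cT(T)$ into $\mo B$ extending to a~derived equivalence $D^b(\mo H) \simeq D^b(\mo B)$; in particular, $F$ preserves indecomposability on $\cT(T)$ and, for every $X \in \cT(T)$, yields natural isomorphisms $\End_B(FX) \cong \End_H(X)$ and $\Ext_B^1(FX, FX) \cong \Ext_H^1(X, X)$. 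Since $\gldim H \leq 1$, the Euler form on $K_0(H)$ satisfies $\chi_H(X) = |\End_H(X)| - |\Ext_H^1(X, X)|$, so it suffices to produce infinitely many pairwise non-isomorphic regular modules $R \in \ind H$ with $\chi_H(R) < 0$.

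First, I would exhibit one such regular module $R_0$. Because $H$ is wild hereditary, its Euler form $\chi_H$ is indefinite on $K_0(H)$, and by classical results of Kerner and Ringel every regular component of $\Gamma_H$ has shape ${\Bbb Z}{\Bbb A}_{\infty}$ with unbounded quasi-length; modules of sufficiently large quasi-length in such a~component realize arbitrarily negative values of $\chi_H$, yielding a~regular indecomposable $R_0$ with $\chi_H(R_0) < 0$. Alternatively, one may fix a~positive vector $d \in K_0(H)$ with $\chi_H(d) < 0$ and appeal to Kac's theorem to realize $d$ as the dimension vector of an indecomposable module, which cannot be postprojective or preinjective (these have nonnegative Euler form on their dimension vectors) and hence must be regular.

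Second, I would spread $R_0$ over its Auslander-Reiten orbit by setting $R_n = \tau_H^n R_0$ for $n \in {\Bbb Z}$. Since $H$ is wild hereditary, no regular module is $\tau_H$-periodic, so the $R_n$ are pairwise non-isomorphic and all regular. The Coxeter transformation $\Phi$ acts on $K_0(H)$ as the shift of dimension vectors along $\tau_H$-orbits of regular modules, and it preserves $\chi_H$, so $\chi_H(R_n) = \chi_H(R_0) < 0$ for every $n \in {\Bbb Z}$. Consequently $|\End_H(R_n)| < |\Ext_H^1(R_n, R_n)|$ for all $n$, and the modules $M_n = F(R_n) \in \ind B$ are pairwise non-isomorphic and inherit the required inequality $|\End_B(M_n)| < |\Ext_B^1(M_n, M_n)|$ via the preservation isomorphisms above.

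The crux of the argument, and its main obstacle, is the first step: rigorously exhibiting a~regular indecomposable $R_0$ with $\chi_H(R_0) < 0$. The remainder is formal, relying only on the $\tau_H$-invariance of $\chi_H$ on the regular part (via the Coxeter transformation), the aperiodicity of regular modules in wild hereditary algebras, and the standard transport of $\End$ and $\Ext^1$ under the tilting equivalence.
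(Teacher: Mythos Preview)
The paper itself does not prove this proposition; it is introduced with the phrase ``we recall also the following known fact'' and no argument or specific reference is given. So there is no proof in the paper to compare against, and your proposal supplies exactly the kind of argument a reader would expect.

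Your approach is correct and standard. The reduction to the wild hereditary algebra $H$ via the tilting equivalence is the natural move: since $T$ is postprojective, regular $H$-modules lie in $\cT(T)$, the functor $F=\Hom_H(T,-)$ is exact on short exact sequences with all terms in $\cT(T)$, and together with its quasi-inverse this yields the isomorphisms $\End_B(FX)\cong\End_H(X)$ and $\Ext^1_B(FX,FX)\cong\Ext^1_H(X,X)$ you need. The propagation step via the $\tau_H$-orbit is clean: the Coxeter transformation preserves $\chi_H$, and regular modules over a wild hereditary algebra are never $\tau_H$-periodic (the regular components are of type ${\Bbb Z}{\Bbb A}_\infty$), so one $R_0$ with $\chi_H(R_0)<0$ immediately yields infinitely many.

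One caveat worth flagging: your appeal to Kac's theorem is stated for path algebras over algebraically closed fields, whereas the paper works throughout with artin algebras, so $H$ may be a species over a non-algebraically closed field. The existence of an indecomposable (necessarily regular) $H$-module with $\chi_H<0$ is still known in this generality---for instance via Ringel's work on wild hereditary algebras (reference \cite{Ri-002} in the paper) or by analysing modules of large quasi-length as you first suggest---but you should make clear which source covers the artin case, since Kac's theorem alone does not. You rightly identify this step as the crux; the rest of the argument is routine.
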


Now Theorem \ref{thm10}, Corollary \ref{cor11}, and Theorem \ref{thm12} follow from Theorem \ref{thm7} and the results presented above.





\bibliographystyle{elsarticle-num}
\bibliography{<your-bib-database>}







\end{document}